


\documentclass{amsart}
\usepackage[mathscr]{eucal}
\usepackage{amssymb}
\usepackage{pifont}
\usepackage{graphicx}
\usepackage{psfrag} 
\usepackage[usenames,dvipsnames]{color}
\usepackage[normalem]{ulem}
\usepackage{amsthm}
\usepackage{bbold}
\usepackage{bbm}
\usepackage{enumerate}
\usepackage{array}
\usepackage{amsmath}

\usepackage{hyperref}
\hypersetup{colorlinks=true,linkcolor={Brown},citecolor={Brown},urlcolor={Brown}}

\usepackage{cleveref}

\numberwithin{equation}{section}
\setcounter{tocdepth}{1}

\hyphenation{right-equivariant group-oid group-oids Gro-then-dieck}


\makeatletter
\@namedef{subjclassname@2020}{%
  \textup{2020} Mathematics Subject Classification}
\makeatother


\makeindex


\usepackage[all]{xy}
\CompileMatrices

\newdir{ >}{{}*!/-10pt/\dir{>}}



\swapnumbers 

\newtheorem{Thm}[equation]{Theorem}

\newtheorem{Prop}[equation]{Proposition}
\newtheorem{Lem}[equation]{Lemma}
\newtheorem{Cor}[equation]{Corollary}

\theoremstyle{remark}
\newtheorem{Rem}[equation]{Remark}
\newtheorem{Def}[equation]{Definition}
\newtheorem{Ter}[equation]{Terminology}
\newtheorem{Not}[equation]{Notation}
\newtheorem{Exa}[equation]{Example}
\newtheorem{Exas}[equation]{Examples}
\newtheorem{Cons}[equation]{Construction}

\newtheorem{Hyp}[equation]{Hypothesis}
\newtheorem{Rec}[equation]{Recollection}

\theoremstyle{definition}

\newtheorem*{Ack*}{Acknowledgements}
\newtheorem*{Org*}{Organization}


\newcommand{\nc}{\newcommand}
\nc{\dmo}{\DeclareMathOperator}

\dmo{\Ab}{Ab}
\dmo{\add}{add} 
\dmo{\Aut}{Aut}
\dmo{\bickMack}{\biMack^{\mathsf{ic}}_\kk} 
\dmo{\biMack}{\mathsf{Mack}} 
\dmo{\Ch}{Ch}
\dmo{\CoInd}{CoInd}
\dmo{\Der}{D}
\dmo{\DER}{\mathsf{DER}}
\dmo{\Db}{D^b}
\dmo{\End}{End}
\dmo{\Fun}{\mathrm{Fun}} 
\dmo{\Free}{\mathrm{free}}
\dmo{\Cofree}{\mathrm{cofree}}
\dmo{\forget}{\mathrm{forget}}
\dmo{\Hom}{Hom}
\dmo{\Ho}{Ho}
\dmo{\img}{im}
\dmo{\incl}{incl}
\dmo{\Ind}{Ind}
\dmo{\inj}{in} 
\dmo{\Inj}{Inj} 
\dmo{\Ker}{Ker}
\dmo{\Kadd}{K_0^{add}}
\dmo{\Kex}{K_0^{exa}}
\dmo{\Kexhigher}{K^{exa}_*}
\dmo{\Ktr}{K_0^{tri}}
\dmo{\Mackey}{Mack} 
\dmo{\CohMackey}{CohMack} 
\dmo{\Map}{Map}%
\dmo{\Mod}{Mod}
\dmo{\Comod}{Comod}
\dmo{\Nat}{Nat}
\dmo{\Qcoh}{Qcoh}
\dmo{\coh}{coh}
\dmo{\fgmod}{mod}
\dmo{\fgfree}{free}
\dmo{\stmod}{stmod}
\dmo{\StMod}{StMod}
\dmo{\latt}{latt}
\dmo{\Mor}{Mor}%
\dmo{\Obj}{Obj}
\dmo{\Proj}{Proj} 
\dmo{\fgproj}{proj} 
\dmo{\pr}{pr}
\dmo{\2Fun}{\mathsf{2Fun}}
\dmo{\PsFunJJ}{\PsFun_{\JJ_!}^{\JJ^\prime\textrm{\!-}\mathsf{oplax}}}
\dmo{\PsFunJop}{\PsFun_{{{\JJ}_{{}_{*}}}}}
\dmo{\PsFunJ}{\PsFun_{\JJ_!}}
\dmo{\PsFunoplax}{\PsFun^{\mathsf{oplax}}}
\dmo{\PsFun}{\mathsf{PsFun}} 
\dmo{\PsNat}{\mathsf{PsNat}}
\dmo{\PsMon}{\mathsf{PsMon}} 
\dmo{\BrPsMon}{\mathsf{BrPsMon}}
\dmo{\SymPsMon}{\mathsf{SymPsMon}}
\dmo{\Rad}{Rad}
\dmo{\Res}{Res}
\dmo{\SH}{SH}
\dmo{\Sh}{Sh}
\dmo{\Spanname}{{\sf Span}}
\dmo{\Spec}{Spec}
\dmo{\Stab}{Stab}
\dmo{\twoFun}{2\mathsf{Fun}}
\dmo{\tr}{tr}

\nc{\ababs}{{\sl ab absurdo}}
\nc{\Add}{\mathsf{Add}}
\nc{\ADD}{\mathsf{ADD}}
\nc{\ADDic}{\mathsf{ADD}^{\ic}}
\nc{\ADDer}{\mathsf{ADDer}}
\nc{\ADDick}{\mathsf{ADD}_\kk{}^{\!\!(\ic)}} 
\nc{\adhoc}{{\sl ad hoc}}
\nc{\adjto}{\rightleftarrows}
\nc{\adj}{\dashv\,}
\nc{\afortiori}{{\sl a fortiori}}
\nc{\aka}{{a.\,k.\,a.}\ }
\nc{\all}{\mathsf{all}}
\nc{\apriori}{{\sl a priori}}
\nc{\ass}{\mathrm{ass}} 
\nc{\bbA}{\mathbb{A}}
\nc{\bbB}{\mathbb{B}}
\nc{\bbC}{\mathbb{C}}
\nc{\bbD}{\mathbb{D}}
\nc{\bbF}{\mathbb{F}}
\nc{\bbI}{\mathbb{I}}
\nc{\bbM}{\mathbb{M}}
\nc{\bbN}{\mathbb{N}}
\nc{\bbP}{\mathbb{P}}
\nc{\bbQ}{\mathbb{Q}}
\nc{\bbR}{\mathbb{R}}
\nc{\bbZ}{\mathbb{Z}}
\nc{\bs}{\backslash}
\nc{\BurnG}{\cat{A}(G)}
\nc{\cat}[1]{\mathcal{#1}}
\nc{\Cat}{\mathsf{Cat}}
\nc{\CAT}{\mathsf{CAT}}
\nc{\cf}{{\sl cf.}\ }
\nc{\Cf}{{\sl Cf.}\ }
\nc{\colim}{\mathop{\mathrm{colim}}}
\nc{\costar}{**}
\nc{\co}{{\mathrm{co}}}
\nc{\DD}{\cat{D}}
\nc{\Displ}{\displaystyle}
\nc{\diag}[1]{\overline{#1}} 
\nc{\offdiag}[1]{{#1}^\dagger} 
\nc{\doublequot}[3]{#1\backslash #2/#3}
\nc{\Ecell}{\rotatebox[origin=c]{90}{$\Downarrow$}} 
\nc{\eg}{{\sl e.g.}\ } 
\nc{\Eg}{{\sl E.g.}\ } 
\nc{\eps}{\varepsilon}
\nc{\equalby}[1]{\overset{\textrm{#1}}{=}}
\nc{\exact}{\mathsf{ex}}
\nc{\faithful}{\mathsf{faithful}}
\nc{\faith}{\mathsf{faithf}}
\nc{\final}{\textrm{\scriptsize{\ding{93}}}} 
\nc{\Funadd}{\Fun_{\amalg}}
\nc{\Funplus}{\Fun_{+}}
\nc{\fun}{\mathrm{fun}} 
\nc{\GG}{\mathbb{G}}
\nc{\gpdG}{{\groupoidf_{\!\smallslash\!G}}} 
\nc{\gpdGzero}{{\groupoidf_{\!\smallslash\!G_0}}\!} 
\nc{\gpdfover}[1]{\groupoidf_{\!\smallslash\!#1}}
\nc{\gpd}{\groupoid}%
\nc{\gps}{\mathsf{groups}} 
\nc{\groconn}{\groupoid_{\mathsf{conn}}}
\nc{\groupoidf}{\groupoid{}^{\smallfaithful}}
\nc{\gpdf}{\groupoidf} 
\nc{\groupoid}{\mathsf{gpd}}
\nc{\group}{\mathsf{group}} 
\nc{\Gsets}{G\sset}
\nc{\HGfK}{\doublequot{H}{G}{f(K)}}%
\nc{\HGK}{\doublequot HGK}
\nc{\Homcat}[1]{\Hom_{\cat #1}}
\nc{\hooklongleftarrow}{\longleftarrow\joinrel\rhook}
\nc{\hooklongrightarrow}{\lhook\joinrel\longrightarrow}
\nc{\hook}{\hookrightarrow}
\nc{\Hsets}{H\mathsf{-sets}}
\nc{\ic}{\mathsf{ic}}
\nc{\ICAdd}{\Add_{\ic}}%
\nc{\ICADD}{\ADD_{\ic}}%
\nc{\Idcat}[1]{\Id_{\cat{#1}}}
\nc{\id}{\mathrm{id}}
\nc{\Id}{\mathrm{Id}}
\nc{\ie}{{\sl i.e.}\ }
\nc{\into}{\mathop{\rightarrowtail}}
\nc{\inv}{^{-1}}
\nc{\Iout}[1]{\Ivo{\sout{#1}}}
\nc{\isocell}[1]{\undersett{ #1}{\overset{\sim}{\Ecell}}} 
\nc{\backisocell}[1]{\undersett{ #1}{\overset{\sim}{\Wcell}}} 
\nc{\Isocell}[1]{\undersett{ #1}{\overset{\sim}{\Longrightarrow}}}
\nc{\isoEcell}{\overset{\sim}{\Rightarrow}} 
\nc{\isotoo}{\stackrel{\sim}\longrightarrow}
\nc{\isoto}{\buildrel \sim\over\to}
\nc{\Ivo}[1]{{\color{OliveGreen}#1}}
\nc{\JJ}{\mathbb{J}}
\nc{\kk}{\Bbbk}
\nc{\KK}{\mathrm{KK}}
\nc{\leps}{{}^{\ell}\eps}
\nc{\leta}{{}^{\ell}\eta}
\nc{\loccit}{{\sl loc.\ cit.}}
\nc{\lotoo}[1]{\overset{#1}{\,\longleftarrow\,}}
\nc{\loto}[1]{\overset{#1}{\leftarrow}}
\nc{\lto}{\leftarrow}
\nc{\lun}{\mathrm{lun}} 
\nc{\Mackintro}[1]{(Mack\,\ref{Mack-#1-intro})}
\nc{\Mack}[1]{(Mack\,\ref{Mack-#1})}
\nc{\Mid}{\,\big|\,}
\nc{\MMod}{\,\text{-}\Mod}%
\nc{\PProj}{\,\text{-}\Proj}
\nc{\CComod}{\,\text{-}\Comod}
\dmo{\mods}{mod}%
\nc{\mmods}{\,\text{-}\mathrm{mod}}%
\nc{\MM}{\cat{M}}
\nc{\Muniv}{\cat{M}_{\mathsf{univ}}}
\nc{\Ncell}{\rotatebox[origin=c]{0}{$\Uparrow$}} 
\nc{\NEcell}{\rotatebox[origin=c]{135}{$\Downarrow$}} 
\nc{\NN}{\cat{N}}
\nc{\noloc}{\nobreak\mspace{6mu plus 1mu}{:}\nonscript\mkern-\thinmuskip\mathpunct{}\mspace{2mu}}
\nc{\NWcell}{\rotatebox[origin=c]{-135}{$\Downarrow$}} 
\nc{\oEcell}[1]{\overset{\scriptstyle #1}{\Ecell}} 
\nc{\oWcell}[1]{\overset{\scriptstyle #1}{\Wcell}} 
\nc{\ointo}[1]{\overset{#1}{\rightarrowtail}}
\nc{\olto}[1]{\overset{#1}\lto}
\nc{\onto}{\mathop{\twoheadrightarrow}}
\nc{\op}{{\mathrm{op}}}
\nc{\xto}[1]{\xrightarrow{#1}}
\nc{\oto}[1]{\overset{#1}\to}
\nc{\Paul}[1]{{\color{Blue}#1}}
\nc{\pih}[1]{\tau_{1}#1}%
\nc{\Pout}[1]{\Paul{\sout{#1}}}
\nc{\qquadtext}[1]{\qquad\textrm{#1}\qquad}
\nc{\quadtext}[1]{\quad\textrm{#1}\quad}
\nc{\ra}{\rightarrow}
\nc{\reps}{{}^{r\!}\eps}
\nc{\restr}[1]{{|_{\scriptstyle #1}}}
\nc{\reta}{{}^{r\!}\eta}
\nc{\run}{\mathrm{run}} 
\nc{\Sad}{\mathsf{Sad}}
\nc{\SAD}{\mathsf{SAD}}
\nc{\sbull}{{\scriptscriptstyle\bullet}}
\nc{\Scell}{\rotatebox[origin=c]{0}{$\Downarrow$}} 
\nc{\SEcell}{\rotatebox[origin=c]{45}{$\Downarrow$}} 
\nc{\SET}[2]{\big\{\,#1\Mid#2\,\big\}}
\nc{\set}{\mathsf{set}} 
\nc{\Set}{\mathsf{Set}}
\nc{\smallfaithful}{\mathsf{f}}
\nc{\smallslash}{{}^{\scriptscriptstyle/}}
\nc{\smat}[1]{\left(\begin{smallmatrix} #1 \end{smallmatrix}\right)}
\nc{\spanG}{{\widehat{\mathsf{gp}\,\,}\!\!\mathsf{d}}{}^\smallfaithful_{\!{}^{\scriptscriptstyle/}\!G}}
\nc{\Spanhat}{\textrm{\sf S}\widehat{\textrm{\sf pan}}} %
\nc{\Span}{\Spanname}
\nc{\sset}{\textrm{-}\set}
\nc{\str}{\mathsf{str}}
\nc{\SWcell}{\rotatebox[origin=c]{-45}{$\Downarrow$}} 
\nc{\too}{\mathop{\longrightarrow}\limits}
\nc{\tristars}{\begin{center} $ *\;*\;* $ \end{center}}
\nc{\tSpan}{\pih{\Spanname}}
\nc{\Unit}{\mathbb{1}}
\nc{\undersett}[1]{\underset{\scriptstyle #1}}
\nc{\un}{\mathrm{un}} 
\nc{\vcorrect}[1]{{\vphantom{\vbox to #1em{}}}}
\nc{\Wcell}{\rotatebox[origin=c]{90}{$\Uparrow$}} 
\nc{\what}[1]{\widehat{\cat{#1}}}
\nc{\xra}{\xrightarrow}

\nc{\xBur}{\mathrm{B^c}} 
\nc{\xBurk}{ \mathrm{B}^{\mathrm{c}}_{\kk} } 
\nc{\Bur}{\mathrm{B}} 
\nc{\Burk}{\Bur_{\kk}} 

\nc{\isoTo}{\overset{\sim}{\Rightarrow}}
\nc{\isoc}[3]{#1\,{\diamond}_{_{\!#3}}#2}
\nc{\Isoc}[3]{(\isoc{#1}{#2}{#3})}

\nc{\lproj}{\mathrm{Lp}} 
\nc{\rproj}{\mathrm{Rp}} 

%

\begin{document}


\title{Green 2-functors}

\author{Ivo Dell'Ambrogio}
\date{\today}

\address{\ \medbreak
\noindent Univ.\ Lille, CNRS, UMR 8524 - Laboratoire Paul Painlev\'e, F-59000 Lille, France}
\email{ivo.dell-ambrogio@univ-lille.fr}
\urladdr{http://math.univ-lille1.fr/$\sim$dellambr}

\begin{abstract} \normalsize
We extend the theory of Mackey 2-functors \cite{BalmerDellAmbrogio20} by defining the appropriate notion of rings, namely \emph{Green 2-functors}. 
After providing the first results of our theory and abundant examples, we show how all classical Green functors familiar from representation theory and topology arise by decategorification, in various ways, of some Green 2-functor occurring in Nature.
\end{abstract}

\thanks{Author supported by Project ANR ChroK (ANR-16-CE40-0003) and Labex CEMPI (ANR-11-LABX-0007-01).}

\subjclass[2020]{20J05, 18B40, 18N10, 19A22} 
\keywords{Green functor, Mackey 2-functor, Frobenius algebra}

\maketitle


\tableofcontents

\vskip-\baselineskip\vskip-\baselineskip\vskip-\baselineskip

\section{Introduction}
\label{sec:introduction} %

Mackey 2-functors, introduced in \cite{BalmerDellAmbrogio20} \cite{BalmerDellAmbrogio21}, provide an axiomatic framework for the induction and restriction functors which are omnipresent in equivariant mathematics, at least in the case when the groups are \emph{finite} and the categories \emph{additive}. 
Thus Mackey 2-functors categorify the Mackey (1-) functors commonly used in representation theory and topology, whose values are abelian groups. 
A ring (\ie monoid) in the category of Mackey functors is called a \emph{Green functor}. 
Examples abound, and certainly provide the classical theory of Mackey functors with much of its power; see \cite{Webb00} \cite{Lewis80} \cite{Bouc97}.

We would similarly expect the notion of \emph{Green 2-functor}, \ie of a Mackey 2-functor equipped with a compatible multiplicative structure, to be quite useful.
The goal of this paper is to pin down a precise and workable definition of Green 2-functor, to develop its basic theory, and to provide many examples.

Recall that, formally, a Mackey 2-functor is a contravariant 2-functor 
\[\cat M\colon \gpd^\op \longrightarrow \ADD \] 
from the 2-category of finite groupoids to the 2-category of additive categories, satisfying a few reasonable axioms.
(Variations in the source and target of $\cat M$ are possible, \eg in order to consider Mackey 2-functors for a fixed group rather than global ones.)
In particular, we ask the ``restriction'' functor $i^*=\cat M(i)\colon \cat M(G)\to \cat M(H)$ along every \emph{faithful} morphism $i\colon H\to G$ of groupoids (\eg the inclusion of a subgroup $H\leq G$) to admit an ``induction'' functor $i_*\colon \cat M(H)\to \cat M(G)$ which is both left and right adjoint to~$i^*$. Both adjunctions are required to satisfy the base change formula, called \emph{Mackey formula} in this context. By the Rectification Theorem, one can always choose the adjunctions $i_*\dashv i^* \dashv i_*$ so that the two base-change maps are mutual inverses (\Cref{Rem:rectification}); this ``strict'' Mackey formula will be of crucial importance for the present paper.

We now define a \emph{Green 2-functor} to be a Mackey 2-functor $\cat M$ taking values in additive monoidal categories $\cat M(G)$ and strong monoidal additive functors $u^*\colon \cat M(G)\to \cat M(H)$, which we require to ``preserve inductions''. 
In order to make the latter condition precise, we first express the tensor product as an external pairing $\cat M(G_1)\otimes \cat M(G_2)\to \cat M(G_1 \times G_2)$; then we require the latter to satisfy base change for all faithful functors (as in the Mackey formula) with respect to \emph{both} variables $G_1,G_2$; see \Cref{Def:2Green} for details.
This is different from the analogous condition on the internal pairing $\cat M(G)\otimes \cat M(G)\to \cat M(G)$, and is similar to how \emph{monoidal derivators} are defined (see \cite{GPS14}). Compared to monoidal derivators, though,  we must deal with the extra difficulty of working with tensor products of additive categories rather than just Cartesian products. This complicates definitions and has the consequence that the (term-wise) tensor product of two Mackey 2-functors is \emph{not} a Mackey 2-functor (\Cref{Rem:2Mack-no-tensor}). The latter fact turns out to be rather inconsequential, and the most basic theory of Green 2-functors closely parallels that of monoidal additive derivators.
Similarly, we will also easily define braided and symmetric Green 2-functors and modules over them.

Examples of Green 2-functors in our sense are plentiful throughout equivariant mathematics, as we will try to impress upon the reader in \Cref{sec:exa}.
For instance: abelian and derived categories of group algebras in representation theory, equivariant stable homotopy categories in topology, categories of equivariant sheaves in geometry, and equivariant Kasparov categories in noncommutative geometry, all provide symmetric Green 2-functors.  
\begin{center}$*\;*\;*$\end{center}

Armed with the above definition we can now formulate our first results.

To begin with, Green 2-functors, unlike monoidal derivators (\cf \Cref{Rem:mon-der-proj}), always satisfy projection formulas:

\begin{Thm}[Projection formulas; \Cref{Thm:proj-formula}] 
\label{Thm:proj-formulas-Intro}
Let $\cat M$ be a Green 2-functor, and write $\otimes_G$ for the internal tensor product of the category~$\cat M(G)$.
For any faithful morphism $i\colon H\to G$, there are in $\cat M(G)$ canonical natural isomorphisms
\[
i_*( Y \otimes_H i^* X) \cong i_* (Y) \otimes_G X
\quad \textrm{ and } \quad
i_*(  i^* X \otimes_H Y ) \cong  X \otimes_G i_* (Y) 
\]
for all $X\in \cat M(G)$ and $Y\in \cat M(H)$.
\end{Thm}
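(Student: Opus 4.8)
The plan is to deduce both projection formulas from two ingredients already at hand: the strict Mackey formula (the base-change isomorphisms attached to iso-comma squares of finite groupoids, made strict via the Rectification Theorem) and the base-change property of the external pairing in each of its two variables, which is built into the definition of a Green 2-functor (see \Cref{Def:2Green}). Write $\boxtimes$ for the external pairing $\cat M(G_1)\otimes\cat M(G_2)\to\cat M(G_1\times G_2)$, and recall that by construction $X\otimes_G Y\cong\Delta_G^*(X\boxtimes Y)$, where $\Delta_G\colon G\to G\times G$ is the diagonal.

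First I would rewrite the left-hand side of the first formula in purely external terms. Since $\boxtimes$ is a strong pseudonatural transformation, restriction along $\id_H\times i\colon H\times H\to H\times G$ provides a canonical isomorphism $Y\boxtimes i^*X\cong(\id_H\times i)^*(Y\boxtimes X)$. Precomposing with $\Delta_H$ and writing $\gamma_i:=(\id_H\times i)\circ\Delta_H\colon H\to H\times G$, $h\mapsto(h,i(h))$, for the graph of $i$, this yields
\[
Y\otimes_H i^*X\;\cong\;\Delta_H^*\bigl(Y\boxtimes i^*X\bigr)\;\cong\;\gamma_i^*(Y\boxtimes X),
\]
and therefore $i_*(Y\otimes_H i^*X)\cong i_*\gamma_i^*(Y\boxtimes X)$.

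Next, observe that the square with corners $H$, $H\times G$, $G$, $G\times G$, top edge $\gamma_i$, left edge $i$, right edge $i\times\id_G$ and bottom edge $\Delta_G$ commutes strictly and is an iso-comma square: the comparison functor from $H$ to the iso-comma groupoid of the cospan $G\xrightarrow{\Delta_G}G\times G\xleftarrow{i\times\id_G}H\times G$, sending $h$ to $(i(h),(h,i(h)),\id)$, is an equivalence by a direct inspection of objects and morphisms; moreover faithfulness of $i$ makes $i\times\id_G$ faithful (and $\Delta_G$ is always faithful), so $i_*$ and $(i\times\id_G)_*$ exist. The strict Mackey formula applied to this square gives $i_*\gamma_i^*\cong\Delta_G^*(i\times\id_G)_*$. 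Combining with the base-change isomorphism $(i\times\id_G)_*(Y\boxtimes X)\cong(i_*Y)\boxtimes X$ for $\boxtimes$ in its first variable, we conclude
\[
i_*(Y\otimes_H i^*X)\;\cong\;\Delta_G^*\bigl((i_*Y)\boxtimes X\bigr)\;=\;(i_*Y)\otimes_G X.
\]
The second formula is obtained by the mirror-image argument: replace $\gamma_i$ by the graph $h\mapsto(i(h),h)\colon H\to G\times H$, use the iso-comma square of $G\xrightarrow{\Delta_G}G\times G\xleftarrow{\id_G\times i}G\times H$, and invoke base change for $\boxtimes$ in the second variable.

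Naturality of the resulting isomorphism in $X$ and $Y$, as well as its canonicity, come for free, since each of its three building blocks — the pseudonaturality constraint of $\boxtimes$, the base-change isomorphism of an iso-comma square, and the external base-change isomorphism — is natural and is part of the structure or the axioms. The delicate point, and hence the bulk of the actual work, is 2-categorical coherence rather than any conceptual difficulty: one must pin down the adjunctions $i_*\dashv i^*\dashv i_*$ via Rectification so that the iso-comma base-change 2-cells are honest inverses, keep careful track of the variances in the pseudonaturality squares of $\boxtimes$, and verify that the mate used in the external base-change hypothesis is the one built from the same adjunction $i_*\dashv i^*$ that enters the computation.
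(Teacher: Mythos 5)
Your argument is correct and follows essentially the same route as the paper's proof of \Cref{Thm:proj-formula}: your graph-of-$i$ square is exactly the Mackey square of \Cref{Lem:Mackey-for-proj}, and the paper likewise combines the (strict) Mackey formula for that square with the bimorphism base-change condition in one variable of \Cref{Prop:bimorphism}, via the mate correspondence of \Cref{Rem:mate-corr-pairing}. The only difference is presentational: the paper additionally identifies the resulting composite with the explicit unit/counit projection maps of \Cref{Def:Frobs} and shows the left and right versions are mutual inverses, a refinement the statement as quoted does not require.
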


More generally, this holds for any \emph{bimorphism} $(\cat M, \cat N) \to \cat L$ between three a~priori distinct Mackey functors, \ie for any external pairing which preserves inductions in both variables (\Cref{Def:bimorphism}). 
For example, it also holds for the action on left and right modules over a Green 2-functor. 
In fact we will show that a pairing between Mackey 2-functors is a bimorphism \emph{if and only if} it satisfies the projection formulas (see \Cref{Thm:proj-implies-bimorphism}). This provides an alternative definition of a Green 2-functor: It is a Mackey 2-functor whose underlying 2-functor $(-)^*$ takes values in monoidal additive categories and satisfies the projection formulas (\Cref{Def:quasi-Green}).

Let us stress that the projection formulas of \Cref{Thm:proj-formulas-Intro} are extremely nice, because the maps are given by mate transformations of the monoidal structure with respect to \emph{one} of the adjunctions, while their inverses are obtained by the corresponding mates for the \emph{other} adjunction. This is analogous to, and a consequence of, the above-mentioned strict Mackey formula for the underlying Mackey 2-functor~$\cat M$, and is an important ingredient for proving our next result.

For any Green 2-functor $\cat M$ and faithful $i\colon H\to G$, we may consider the object 
\[ A(i) := i_*(\Unit_H) \in \cat M(G)\] 
induced from the unit $\Unit_H$ of the tensor category~$\cat M(H)$. 
As restriction $i^*$ is strong monoidal, its left-and-right adjoint $i_*$ is both a lax and a colax monoidal functor, hence $A(i)$ becomes a monoid and a comonoid. Even better:

\begin{Thm}[Frobenius structure; Theorems~\ref{Thm:Frobenius} and \ref{Thm:half-braiding}]
\label{Thm:Frob-Intro}
The object $A(i)$ inherits a canonical structure of special Frobenius algebra in the tensor category~$\cat M(G)$, which lifts to a commutative Frobenius algebra in the monoidal center of $\cat M(G)$.
\end{Thm}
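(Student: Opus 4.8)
The plan is to produce the Frobenius structure on $A(i) = i_*(\Unit_H)$ directly from the two adjunctions $i_* \dashv i^* \dashv i_*$ together with the strict Mackey formula and the strong monoidality of $i^*$. Since $i^*$ is strong monoidal, its right adjoint $i_*$ is lax monoidal and its left adjoint $i_*$ is colax monoidal; because the strict Mackey formula lets us take these to be \emph{the same} functor with compatible unit/counit, $A(i)$ acquires simultaneously a monoid structure (multiplication $A(i)\otimes_G A(i)\to A(i)$ from the lax structure applied to $\Unit_H\otimes_H\Unit_H\cong\Unit_H$, unit $\Unit_G\to A(i)$ from the lax unit constraint, which is the unit $\eta$ of $i_*\dashv i^*$) and a comonoid structure (comultiplication and counit from the colax structure, the counit being the counit $\eps$ of $i^*\dashv i_*$). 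First I would spell out these four structure maps explicitly as composites of the (co)unit maps, the monoidal constraints of $i^*$, and the canonical isomorphisms $i^*\Unit_G\cong \Unit_H$, $\Unit_H\otimes_H\Unit_H\cong\Unit_H$.

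Next I would verify the Frobenius compatibility, i.e.\ that $(\id_{A(i)}\otimes m)\circ(\Delta\otimes\id_{A(i)}) = \Delta\circ m = (m\otimes\id_{A(i)})\circ(\id_{A(i)}\otimes\Delta)$ as maps $A(i)\otimes_G A(i)\to A(i)\otimes_G A(i)$. The key technical tool here is the projection formula of \Cref{Thm:proj-formulas-Intro}: it gives $i_*(Y)\otimes_G X\cong i_*(Y\otimes_H i^*X)$, and — crucially, as emphasized in the excerpt — the forward map is the mate of the monoidal constraint for \emph{one} adjunction while its inverse is the mate for the \emph{other}. Using these isomorphisms I can rewrite $A(i)\otimes_G A(i) = i_*(\Unit_H)\otimes_G i_*(\Unit_H) \cong i_*\bigl(i^* i_*(\Unit_H)\bigr)$, and then the Frobenius law becomes a statement about the (co)unit maps of a single adjunction satisfied automatically by the triangle identities, after pushing everything through $i_*$ and using that $i^*$ is strong monoidal. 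Concretely, I expect the associativity of $m$, coassociativity of $\Delta$, and the Frobenius relation all to reduce — via the projection isomorphisms — to the two triangle identities $\eps i_* \circ i_* \eta = \id$ and $i^*\eps\circ \eta i^* = \id$, exactly as in the standard fact that an ambidextrous adjunction produces a Frobenius monoid; the monoidal input only enters to identify the relevant objects with $A(i)$ and its tensor powers.

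For the \emph{special} property ($m\circ\Delta = \id_{A(i)}$, and the unit/counit composite being an isomorphism, up to the usual normalization) I would trace through $\Delta$ followed by $m$ and recognize the composite as $i_*$ applied to $\eps_{\Unit_H}\circ\eta_{\Unit_H}$-type data under the projection isomorphism; here the strict Mackey formula is what guarantees this is the identity rather than merely an invertible scalar multiple — this is the precise point where ``special'' (as opposed to just ``Frobenius'') comes from, and it is the one place where the full strength of rectification, not just ambidexterity, is used. Finally, for the lift to the monoidal center $\mathcal Z(\cat M(G))$ (Theorem~\ref{Thm:half-braiding}), I would construct the half-braiding $A(i)\otimes_G X\xrightarrow{\sim} X\otimes_G A(i)$ natural in $X\in\cat M(G)$ as the composite of the two projection isomorphisms $i_*(\Unit_H)\otimes_G X\cong i_*(i^*X)\cong X\otimes_G i_*(\Unit_H)$ (first the right-handed projection formula, then the left-handed one), check the hexagon axiom for a half-braiding — which again unwinds to naturality of the monoidal constraints of $i^*$ plus the triangle identities — and check that $m$, $\Delta$, $\eta$, $\eps$ are morphisms in the center, i.e.\ compatible with the half-braidings; commutativity of the resulting Frobenius algebra in $\mathcal Z(\cat M(G))$ then means precisely that $m$ is symmetric with respect to this half-braiding, which follows because the two projection isomorphisms used to build the half-braiding are the ``mate for one adjunction / inverse-mate for the other'' maps and hence swap the two factors of $\Unit_H\otimes_H\Unit_H\cong\Unit_H$ symmetrically.

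The main obstacle I anticipate is purely bookkeeping: carefully matching the two projection isomorphisms (one per adjunction) against the lax and colax monoidal structures on the single functor $i_*$, so that the Frobenius square and the special relation come out as literal identities rather than up-to-coherence statements. Everything rests on the rectified, strict form of the Mackey formula; without it one would get a Frobenius algebra only up to an invertible ``Euler-characteristic'' twist, so the order of operations matters: establish strictness (\Cref{Rem:rectification}) and the projection formulas first, then the algebra structure, then Frobenius and special, and only at the end the center lift.
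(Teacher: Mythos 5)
Your overall strategy is the paper's own: the (co)monoid structures on $A(i)$ come from the lax/colax monoidal structures on the ambidextrous adjoint $i_!=i_*$ (\Cref{Prop:induced-co-monoid}); the Frobenius law is obtained by transferring the Frobenius monad structure on $\bbM=i_*i^*=i_!i^*$ along the projection isomorphism $A(i)\otimes(-)\cong\bbM$; specialness and the half-braiding $A(i)\otimes X\cong i_*i^*X\cong X\otimes A(i)$ rest on the left and right projection maps being mutually inverse. Two genuine differences of route are worth noting. First, the step you dismiss as ``purely bookkeeping''---matching the projection isomorphisms against the lax and colax structures on the single functor $i_*$---is precisely where the paper's work lies: Lemmas~\ref{Lem:proj-lax-colax} and~\ref{Lem:decomps-Frob} identify the projection maps with (co)lax-structure composites, so that $\rproj^2_{\Unit,-}$ is simultaneously an isomorphism of \emph{monads and of comonads} (\cite[Lemma~2.8]{BalmerDellAmbrogioSanders15} and its dual); the Frobenius square for $A(i)$ is then inherited from the one for $\bbM$, rather than ``reducing to the triangle identities'' by a direct computation, which the paper explicitly judges too complicated to carry out by hand. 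Second, for the lift to the monoidal center the paper does not verify the braid relation, centrality of $\mu,\delta,\iota,\epsilon$, and commutativity directly, as you propose; it recognizes $i_!\dashv i^*\dashv i_*$ as a Hopf (co)monoidal adjunction whose Hopf operators are exactly the projection maps and invokes \cite[Thm.~6.6]{BLV11}. Your by-hand verification should go through, but it is the longer road and is exactly the pile of checks the Hopf-monad formalism is designed to absorb.

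Two small corrections. The rectification input for specialness $\mu\circ\delta=\id_{A(i)}$ is the special Frobenius relation $\reps\circ\leta=\id$ of~\eqref{eq:special-Frob}, used together with $\lproj=(\rproj)^{-1}$; the strict Mackey formula~\eqref{eq:strict-MF} is what yields the latter inverse relation, so attributing ``special'' to the strict Mackey formula alone conflates two distinct parts of \Cref{Rem:rectification}. Also drop your parenthetical requirement that the unit--counit composite be invertible: here $\epsilon\circ\iota$ computes the dimension of $A(i)$ (\Cref{Prop:dim}), e.g.\ multiplication by the index $[G:H]$, which is typically not invertible and is not part of ``special'' as the paper uses the term.
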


In fact, even forgetting the tensor structure on~$\cat M$, the ambijunction $i_*\dashv i^* \dashv i_*$ always induces a Frobenius monad $\mathbb M(i):=i_*i^*$ on~$\cat M(G)$. 
The point here is that the projection formula yields an isomorphism $\mathbb M(i) (X) \cong A(i) \otimes_G X$ of (co)monads so well-behaved that it allows us to transfer all this structure onto the object~$A(i)$; one can say the Frobenius monad $\mathbb M(i)$ is \emph{canonically represented} by the Frobenius algebra~$A(i)$.
The second part of the theorem, mostly relevant if $\cat M$ is not braided, says more precisely that the composite $A(i) \otimes_G X \cong i_*i^*X \cong X \otimes_G A(i)$ of the two projection isomorphisms provides a \emph{half-braiding} on $A(i)$ which is compatible with all the structure on $A(i)$ and also with any other half-braiding on any other object of~$\cat M(G)$.
In short, $A(i)$ is intrinsically commutative in the best possible way.

Morally, the latter results belong to the same school as the study of \emph{relative dualising objects} of~\cite{BalmerDellAmbrogioSanders16}: Both theories study a certain class of adjunctions through the properties of a canonically associated object. (The two are also roughly complementary because, in the shared examples, we get a non-trivial Frobenius object $A(i)$  precisely when the relative dualizing object $\omega_i$ is trivial. 
It is an interesting challenge to develop a framework encompassing both theories; \cf \Cref{Rem:sep-generality}.)
Still in the same vein, the next ``tensor monadicity'' is a consequence of \Cref{Thm:Frob-Intro} and the fact that special Frobenius algebras are in particular separable (co)algebras: 

\begin{Thm} [Tensor monadicity; \Cref{Thm:separable-monoidicity}]
\label{Thm:tensor-mon-Intro}
Suppose the Green 2-functor $\cat M$ takes values in additive monoidal categories which are idempotent complete.
Then for every faithful $i\colon H\to G$ there are canonical equivalences between $\cat M(H)$ and the Eilenberg--Moore categories of $A(i)$-modules and of $A(i)$-comodules in~$\cat M(G)$:
\[
A(i) \CComod_{\cat M(G)} 
\overset{\sim}{\longleftarrow}
\cat M(H)
\overset{\sim}{\longrightarrow}
A(i)\MMod_{\cat M(G)}
\]
If the Green 2-functor $\cat M$ is braided, both are equivalences of braided monoidal categories, where (co)modules are endowed with the usual tensor product over~$A(i)$.
\end{Thm}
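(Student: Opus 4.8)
The plan is to reduce the statement to an Eilenberg--Moore (co)monadicity question and settle it via separability. First I would use the identification, recorded after \Cref{Thm:Frob-Intro}, of the Frobenius monad $\bbM(i)=i_*i^*$ on $\cat M(G)$ with $A(i)\otimes_G(-)$ (and dually of the comonad $\bbM(i)$ with $A(i)\otimes_G(-)$); this turns $A(i)\MMod_{\cat M(G)}$ into the Eilenberg--Moore category of the monad $\bbM(i)$ and $A(i)\CComod_{\cat M(G)}$ into the Eilenberg--Moore category of the comonad $\bbM(i)$. Under this dictionary the two functors in the statement become the canonical comparison functor $\cat M(H)\to A(i)\MMod_{\cat M(G)}$ attached to the adjunction $i^*\dashv i_*$ (sending $Y$ to $i_*Y$ with the $\bbM(i)$-action induced by the counit $i^*i_*\to\Id_{\cat M(H)}$) and the canonical co-comparison functor $\cat M(H)\to A(i)\CComod_{\cat M(G)}$ attached to the adjunction $i_*\dashv i^*$. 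So it suffices to prove these are equivalences of categories.

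For this I would collect two ingredients. \emph{Conservativity:} the strict Mackey formula for the underlying Mackey 2-functor exhibits $\Id_{\cat M(H)}$ as a direct summand of the endofunctor $i^*i_*$ of $\cat M(H)$ --- the ``diagonal'' double-coset piece, whose inclusion and projection are the unit of $i_*\dashv i^*$ and the counit of $i^*\dashv i_*$; by naturality $i^*i_*$ is block-diagonal on morphisms, hence conservative, and therefore so is $i_*$. \emph{Separability:} by \Cref{Thm:Frob-Intro} the algebra $A(i)$ is \emph{special} Frobenius, so its multiplication admits the comultiplication as an $A(i)$-bimodule section and, dually, its comultiplication admits the multiplication as an $A(i)$-bicomodule retraction; hence $\bbM(i)$ is a separable monad and a coseparable comonad. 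Now I would invoke the separable-monadicity theorem --- a separable monad arising from an adjunction with conservative right adjoint, on an idempotent-complete category, is monadic, and dually for coseparable comonads. Since by hypothesis every $\cat M(G)$, in particular $\cat M(H)$, is idempotent complete, both functors are equivalences. (Concretely, full faithfulness comes from separability together with conservativity, while essential surjectivity uses that every $A(i)$-module, resp.\ $A(i)$-comodule, is a retract of a free, resp.\ cofree, one --- which lies in the essential image --- together with idempotent completeness of $\cat M(H)$.)

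For the braided refinement, assume $\cat M$ is braided. By the second half of \Cref{Thm:Frob-Intro} the algebra $A(i)$ is commutative, so $A(i)\MMod_{\cat M(G)}$ carries its usual monoidal structure given by the relative tensor product $\otimes_{A(i)}$ --- which exists because separability makes the defining coequalizers split, hence absolute, so they are computed inside the idempotent-complete category $\cat M(G)$ --- and $A(i)\CComod_{\cat M(G)}$ dually carries the relative cotensor product; both are braided via the ambient braiding of $\cat M(G)$. To see that the equivalence $\cat M(H)\xrightarrow{\;\sim\;}A(i)\MMod_{\cat M(G)}$ is braided monoidal, I would transport the braided monoidal structure of $\cat M(H)$ along it and check the result is $(\otimes_{A(i)},\text{ambient braiding})$. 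Since both tensor products preserve split coequalizers in each variable and every module is a retract of a free one, it is enough to compare on free modules; there both sides are computed by $\otimes_G$ in $\cat M(G)$ --- the free module on $X$ tensored with the free module on $X'$ is the free module on $X\otimes_G X'$ either way, using that $i^*$ is braided strong monoidal and the projection formula of \Cref{Thm:proj-formulas-Intro} --- and the coherence and braiding constraints are in both cases inherited from $\cat M(G)$. The comodule case is dual.

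The main obstacle is the core (co)monadicity step: pinning down the precise separable-monadicity statement and checking its hypotheses, the crucial point being that conservativity of $i_*$ (supplied by the Mackey decomposition $i^*i_*\cong\Id_{\cat M(H)}\oplus(\cdots)$) together with separability of $\bbM(i)$ (supplied by the word ``special'' in ``special Frobenius'') and idempotent completeness are exactly what is needed --- one must not assume $\cat M(H)$ has coequalizers, which is why the split-coequalizer and retract formulations are used throughout. The monoidal upgrade is then mostly bookkeeping once the projection formula is in hand; its one genuine subtlety is the point already flagged, that $\otimes_{A(i)}$ must be built from the split coequalizers supplied by separability so that one stays within $\cat M(G)$.
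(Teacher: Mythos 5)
Your argument is correct and follows essentially the same route as the paper: the projection-formula isomorphism $\pi\colon A(i)\otimes(-)\overset{\sim}{\Rightarrow}\bbM$ of (co)monads identifies $A(i)$-(co)modules with the Eilenberg--Moore categories of $\bbM=i_*i^*=i_!i^*$, the (co)comparison functors of the ambijunction are equivalences by the separable (co)monadicity of the underlying Mackey 2-functor (which the paper simply cites as \cite[Thm.\,2.4.1]{BalmerDellAmbrogio20} and whose proof you re-sketch from the relation $\reps\circ\leta=\id$, separability, and idempotent completeness), and the braided monoidal upgrade uses commutativity of $A(i)$ together with the split (co)equalizers provided by separability. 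The only cosmetic difference is in this last step, where the paper factors the lax monoidal structure of $i_*$ through the split coequalizers to obtain the strong monoidal structure on the comparison functor, whereas you transport the structure and compare on free modules; both come to the same bookkeeping.
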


This theorem refines the general separable monadicity of the underlying Mackey 2-functor (see  \cite[\S2.4]{BalmerDellAmbrogio20}).
It also recovers and improves the disparate results of \cite{BalmerDellAmbrogioSanders15} under a unified and more conceptual proof. 

Finally, let us explain how our Green 2-functors relate to ordinary Green 1-functors.  
(Here we understand \emph{ordinary} Mackey and Green functors in a very flexible way, covering all the usual variants in use for finite groups, including local and global ones, inflation functors, biset functors, etc.; see Definitions~\ref{Def:Mackey-fun} and~\ref{Def:Green-fun}.) 
There are at least two ways in which we can ``decategorify'' a Mackey 2-functor $\cat M$ in order to obtain an ordinary Mackey functor~$M$.
Firstly, and going back to \cite{Dress73}, we have the ordinary \emph{K-decategorification}, where we postcompose $\cat M$ with (some version of) the Grothendieck group~$\mathrm K_0$ in order to get $G\mapsto M(G)=\mathrm K_0 (\cat M(G))$; see \Cref{Cons:K-decat}. 
Secondly, we have the \emph{Hom-decategorification} introduced in \cite{BalmerDellAmbrogio21pp}, where we choose pairs of objects $X_G,Y_G\in \cat M(G)$ for all~$G$ in a coherent way in order to get $G\mapsto M(G)= \Hom_{\cat M(G)}(X_G,Y_G)$; see \Cref{Cons:Hom-decat}.

\begin{Thm} [Decategorification; Theorems~\ref{Thm:Green-K-decat}, \ref{Thm:Green-End-decat} and~\ref{Thm:Hom-decat-co-mon}]
\label{Thm:decats-Intro}
Let $\cat M$ be any Mackey 2-functor. Then:
\begin{enumerate}[\rm(a)]
\item Every Green 2-functor structure on $\cat M$ yields through K-decategorification an ordinary Green functor structure on $M\mapsto \mathrm K_0(\cat M(G))$. (For this, assume also that each value category $\cat M(G)$ is essentially small.)
\item Every coherent choice of objects with $X_G=Y_G$ yields through Hom-decat\-eg\-orif\-ication an ordinary Green functor $G\mapsto \End_{\cat M(G)}(X_G)$. In particular, if $\cat M$ is a Green 2-functor we may take each $X_G= Y_G$ to be the unit $\Unit_G\in\cat M(G)$.
\item Suppose $\cat M$ is a Green 2-functor. Every coherent choice of objects $X_G, Y_G$ in the tensor category~$\cat M(G)$ together with a comonoid structure on $X_G$ and a monoid structure on~$Y_G$ yields, through Hom-decat\-eg\-orif\-ication, a Green functor $M$ where $M(G)= \Hom(X_G,Y_G)$ has the induced convolution product.
\end{enumerate}
\end{Thm}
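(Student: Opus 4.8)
The plan is to treat the three parts in parallel. In each case an ordinary Mackey functor is already available --- the K-decategorification $G\mapsto\mathrm K_0(\cat M(G))$ for part~(a) (\Cref{Cons:K-decat}), and the Hom-decategorification $G\mapsto\Hom_{\cat M(G)}(X_G,Y_G)$ attached to the chosen coherent family of objects for parts~(b) and~(c) (\Cref{Cons:Hom-decat}) --- so what remains is to put a unital associative product on each value group, to check that restrictions are unital ring homomorphisms, and to verify the two Frobenius (projection) identities
\[
\tr(a\cdot i^*b)=\tr(a)\cdot b
\qquad\textrm{and}\qquad
\tr(i^*b\cdot a)=b\cdot\tr(a)
\]
for the transfer maps $\tr$. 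Conjugations are restrictions along isomorphisms of groupoids, hence will be ring isomorphisms as soon as restrictions are ring homomorphisms, so I will not discuss them. For part~(a) one puts the product $[X]\cdot[Y]:=[X\otimes_G Y]$ on $\mathrm K_0(\cat M(G))$: this is well defined because $\otimes_G$ is additive in each variable; it is unital and associative by the monoidal axioms, and commutative if $\cat M$ is braided. Strong monoidality of each $i^*$ makes $\mathrm K_0(i^*)$ a unital ring homomorphism, and the two natural projection isomorphisms of \Cref{Thm:proj-formulas-Intro}, on passing to classes in $\mathrm K_0$, become exactly the two Frobenius identities (first for classes of objects, then for all elements by biadditivity), with $\tr=\mathrm K_0(i_*)$. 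The essential smallness of each $\cat M(G)$ enters only to make $\mathrm K_0$ defined.

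For part~(b) the monoidal structure is irrelevant; one uses the composition product on $\End_{\cat M(G)}(X_G)$. Restriction is ``apply the functor $i^*$, then conjugate by the coherence isomorphism $i^*X_G\isoto X_H$'', hence a unital ring homomorphism since $i^*$ preserves composition and identities and conjugation by an isomorphism is a ring isomorphism. For the Frobenius identities one unwinds the transfer of \Cref{Cons:Hom-decat}: up to coherence it is $g\mapsto\varepsilon_{X_G}\circ i_*(g)\circ\eta_{X_G}$, where $\eta\colon\Id\Rightarrow i_*i^*$ is the unit of $i^*\dashv i_*$ and $\varepsilon\colon i_*i^*\Rightarrow\Id$ the counit of $i_*\dashv i^*$; then $\tr(a\circ i^*b)=\varepsilon\circ i_*(a)\circ(i_*i^*)(b)\circ\eta=\varepsilon\circ i_*(a)\circ\eta\circ b=\tr(a)\circ b$ by naturality of $\eta$, and dually $\tr(i^*b\circ a)=b\circ\tr(a)$ by naturality of $\varepsilon$. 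For the final assertion of~(b): when $\cat M$ is a Green 2-functor the units $\Unit_G$ form a coherent family, the transition isomorphisms $i^*\Unit_G\isoto\Unit_H$ being the canonical ones coming from strong monoidality, coherent by coherence of the monoidal structures together with the pseudofunctoriality of $(-)^*$.

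For part~(c), suppose $X_G$ is a comonoid with comultiplication $\Delta$, $Y_G$ a monoid with multiplication $\mu$, and that the transition isomorphisms $i^*X_G\isoto X_H$, $i^*Y_G\isoto Y_H$ are comonoid and monoid isomorphisms respectively (the strong monoidal $i^*$ carrying (co)monoids to (co)monoids). Endow $\Hom_{\cat M(G)}(X_G,Y_G)$ with the convolution product $f\ast g:=\mu\circ(f\otimes_G g)\circ\Delta$ and unit $\eta_{Y_G}\circ\varepsilon_{X_G}$; associativity follows from (co)associativity. Restriction is a ring homomorphism for $\ast$ by a routine diagram chase: strong monoidality turns $\Delta,\mu$ into $\Delta_{i^*X_G},\mu_{i^*Y_G}$, and the transition isomorphisms identify these with $\Delta_{X_H},\mu_{Y_H}$. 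The Frobenius identities are the crux. I would substitute the convolution product into the transfer formula $\tr(g)=\varepsilon_{Y_G}\circ i_*(g)\circ\eta_{X_G}$ of part~(b), use naturality of $\eta$ against $\Delta$ and of $\varepsilon$ against $\mu$ to move the outer (co)multiplications past the (co)unit, and then invoke the projection isomorphisms of \Cref{Thm:proj-formulas-Intro} to pull the ``$b$''-factor past $i_*$. This reduces the identities to the statement that each of the two projection isomorphisms, together with the (co)lax monoidal structure on $i_*$, is compatible with the comultiplication and the multiplication; the point stressed after \Cref{Thm:proj-formulas-Intro} --- that a projection isomorphism is the mate of the monoidal constraint for one of the two adjunctions and its inverse the mate for the other --- is precisely what makes the two Frobenius identities fall out symmetrically, one from each adjunction.

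The step I expect to be the main obstacle is this last compatibility in part~(c): proving that the projection isomorphism is ``multiplicative'', that is, commutes with the comultiplication, multiplication, unit, and counit 2-cells appearing in the string diagram that computes the transfer of a convolution product. This is a multi-step coherence check in which keeping straight the variances and --- above all --- which mate, for which of the two adjunctions, is being used at each step is delicate; the strict Mackey formula for $\cat M$ (equivalently, the fact that the two isomorphisms of \Cref{Thm:proj-formulas-Intro} are inverse mates) is what keeps the two directions of the Frobenius relation in step. A minor preliminary is to fix, in the (co)monoidal setting of~(c), the precise meaning of ``coherent choice of objects'', namely that the transition isomorphisms are (co)monoid morphisms compatibly with the pseudofunctor coherence data; granting that, parts~(a) and~(b) are essentially immediate.
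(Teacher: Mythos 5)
Your proposal is correct and follows essentially the same route as the paper's proofs of Theorems~\ref{Thm:Green-K-decat}, \ref{Thm:Green-End-decat} and~\ref{Thm:Hom-decat-co-mon}: for (a), the $\mathrm{K}_0$-ring with restrictions made multiplicative by strong monoidality and the Frobenius identities read off from \Cref{Thm:proj-formula}; for (b), the composition product with the Frobenius identities obtained from naturality of the adjunction unit/counit against the transfer $\leps\circ u_*(-)\circ\reta$ (the paper spells out the $\leps$-side, you the $\reta$-side, which is the symmetric half); for (c), convolution with the identities reduced to naturality plus the fact that $\lproj$ and $\rproj$ are mutually inverse. The one step you defer as ``the main obstacle''---the compatibility of the projection isomorphism with the (co)monoid data in the transfer of a convolution product---is exactly what the paper's proof of \Cref{Thm:Hom-decat-co-mon} supplies, via the definitional expansion of $\rproj^1$ (naturality of $\reta$, strong monoidality of $u^*$, a triangle identity), naturality of $\rproj^1$, the equality $\rproj^1=(\lproj^1)^{-1}$ from \Cref{Thm:proj-formula}, and the hypothesis that $\lambda_u$ is a comonoid and $\rho_u$ a monoid morphism, so your plan is on target rather than missing an idea.
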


 Similar results hold for modules.
Other variants are possible, such as using higher algebraic K-theory if $\cat M$ takes values in exact categories, or using graded Hom-groups if $\cat M$ takes values in categories with a grading (\eg because they are triangulated). See \Cref{sec:origins} for details.

As listed at the end of the article, all classical examples of Green functors we can think of arise either as a K-decategorification or as a Hom-decategorification (most often at the tensor units) of some Green 2-functor occurring in Nature. Thus as it turns out, all these useful Green functors are mere shadows projected---at various angles---by richer, higher-dimensional structures.
Interestingly, some examples such as the Burnside ring and the complex representation ring are \emph{both} a K- and a Hom-decategorification of (different) Green 2-functors.

\begin{Org*}
Sections \ref{sec:Mackey}-\ref{sec:pairings} contain preliminaries on, respectively: Mackey 2-functors, tensor 2-categories of linear categories, and pairings of 2-functors.
We work throughout under more abstract hypotheses than in this Introduction, allowing Mackey 2-functors to have more general source 2-categories $\GG$ (instead of~$\gpd$), target 2-categories $\mathbb A$ (instead of $\ADD$), and classes of 1-morphisms $\JJ\subseteq \GG$ at which induction is defined. 
In particular, we can work linearly over any commutative coefficient ring~$\kk$, and at little cost our setup will allow applications beyond finite groups (\Cref{Rem:sep-generality}) or beyond groups altogether.
All hypotheses are detailed in~\Cref{sec:Mackey}.
\Cref{sec:proj} treats the projection formulas, which are then applied in
Sections~\ref{sec:Frob}-\ref{sec:monadicity}. 
Let us stress that these applications all hold as more abstract theorems on monoidal functors $i^*$ satisfying the hypotheses in~\Cref{Rem:extra-axiomatization}. 
\Cref{sec:exa}
offers many families of examples of Green 2-functors from throughout mathematics.
After some generalities on ordinary Mackey and Green functors in \Cref{sec:1-Green},
we conclude by explaining in \Cref{sec:origins} the various ways in which our Green 2-functors give rise to the classical Green functors.

\end{Org*}

\begin{Ack*}
I am grateful to K\"ur\c sat S\"ozer for references on monoidal bicategories and to Alexis Virelizier for informative discussions on Frobenius structures and monoidal centers which led to a simplification of \Cref{sec:half-braiding}. I owe many thanks to an anonymous referee for their careful reading and thoughtful comments.
\end{Ack*}

\section{Mackey 2-functors}
\label{sec:Mackey}%

We use the language of 2-categories in a standard way, as recalled for instance in \cite[App.\,A]{BalmerDellAmbrogio20}.
In particular, ``2-categories'' and ``2-functors'' will always be understood to be strict, in contrast to ``bicategories'' and ``pseudofunctors'' (\ie B\'enabou's homomorphisms).
Further recollections will be provided along the way.

\begin{Hyp} \label{Hyp:spannable-pair}
In this article, $(\GG;\JJ)$ will denote a \emph{spannable pair} in the sense of \cite[Def.\,2.31]{DellAmbrogio21ch}.
This means the following (see \emph{loc.\,cit.\ }for more details):
\begin{enumerate}[\rm(1)]
\item $\GG$ is an \emph{essentially small (2,1)-category}, \ie a 2-category whose Hom categories are small groupoids and which, up to equivalence, only has a small set of objects.
\item $\GG$ is an \emph{extensive} 2-category: it admits arbitrary finite coproducts, and for any pair of objects $G,H$ the induced pseudofunctor between comma 2-categories
\[
\GG/G \times \GG/H \longrightarrow \GG/(G\sqcup H), \quad  (X \overset{u}{\to} G, Y \overset{v}{\to} H) \mapsto (X \sqcup Y \overset{u\sqcup v}{\longrightarrow} G\sqcup H)
\]
is a biequivalence (\ie an equivalence of bicategories).
\item $\JJ$ is a 2-full 2-subcategory of $\GG$ containing all equivalences (in particular, all identity 1-cells) and closed under taking isomorphic 1-cells.
\item Pseudopullbacks of 1-cells of $\JJ$ exist and are again in~$\JJ$. That is, if $i\colon H\to G$ and $u\colon K\to G$ are 1-cells with $i\in\JJ$ then there exists a pseudopullback square
\begin{equation}
\label{eq:Mackey-square-in-Hyp}%
\vcenter{
\xymatrix@C=10pt@R=10pt{
& P \ar[ld]_-{p} \ar[rd]^{q} \ar@{}[dd]|{\isocell{\gamma}} & \\
H \ar[rd]_-{i} &  & K \ar[ld]^-{u} \\
& G &
}}
\end{equation}
 in~$\GG$ (which we usually call a \emph{Mackey square}; \cf \cite[\S2.1-2]{BalmerDellAmbrogio20}), and $q\in \JJ$.
\item
For every finite set $\{u_\ell\colon H_\ell\to G\}_\ell$ of 1-cells with common target, the induced 1-cell $(u_\ell)_\ell\colon \sqcup_\ell H_\ell\to G$ on the coproduct is in $\JJ$ iff every $u_\ell$ is. 
\end{enumerate}
These axioms guarantee that \Cref{Def:M2F} below makes sense, and also that we may use $(\GG;\JJ)$ in \Cref{sec:1-Green} to form a suitable (bi)category of spans and define an associated variant notion of (ordinary) Mackey and Green functors.
\end{Hyp}

\begin{Exas} \label{Exas:GGJJ}
The reader should keep the following examples in mind; in fact, in this article we will not look for applications beyond these. In all four cases, arbitrary Mackey squares exist and are provided by iso-comma squares of groupoids:
\begin{enumerate}[\rm(a)]
\item \label{it:exa:all}
$\GG= \JJ = \gpd$ is the 2-category of all finite groupoids, functors and natural transformations between them.
\item \label{it:exa:inflation}
$\GG= \gpd$ and $\JJ = \gpdf$ is the 2-full 2-subcategory of finite groupoids and faithful functors.
\item \label{it:exa:global-Mackey}
$\GG= \JJ = \gpdf$.
\item \label{it:exa:G-local}
$\GG = \JJ = \gpdG$ is the comma 2-category of finite groupoids and faithful functors over a fixed finite groupoid~$G$. Note that there is a biequivalence $\gpdG \simeq G\sset$ with the (1-)category of finite left $G$-sets, under which Mackey squares correspond to usual pullbacks of $G$-sets (see \cite[App.\,B]{BalmerDellAmbrogio20}).
\end{enumerate}
We will always identify a group $G$ with the one-object groupoid having $G$ as automorphism group, and a group morphism with the corresponding functor.
\end{Exas}

\begin{Def} [{Mackey 2-functor \cite{BalmerDellAmbrogio20}}]
\label{Def:M2F}
Suppose $\bbA$ is some \emph{additive 2-category} as in \cite[App.\,A.7]{BalmerDellAmbrogio20}; that is, $\bbA$ is a (possibly very large\footnote{What we mean by \emph{very large} is that the Hom categories of $\bbA$ may not be essentially small, \ie may have a proper class of objects, even when taken up to equivalence.}) 2-category admitting finite direct sums (\ie biproducts) of objects, whose Homs are additive categories and whose horizontal compositions are additive functors of both variables.  
Then an \emph{$\bbA$-valued Mackey 2-functor for $(\GG;\JJ)$}, or just \emph{Mackey 2-functor}, is a 2-functor 
\[ \MM\colon \GG^\op\longrightarrow \bbA \] 
satisfying the following axioms:
\begin{enumerate}[\rm(1)]
\item \emph{Additivity:} $\cat M$ sends finite coproducts to direct sums, $\cat M(\sqcup_\ell G_\ell) \overset{\sim}{\to} \oplus_\ell \cat M(G_\ell)$.
\item \emph{Induction:} if $(i\colon H\to G)\in \JJ$, the 1-cell $i^*:=\cat M(i)\colon \cat M(G)\to \cat M(H)$ admits in $\bbA$ a left adjoint $i_!$ and a right adjoint~$i_*$.  
\item \emph{Mackey formulas:} for every Mackey square \eqref{eq:Mackey-square-in-Hyp} with $i,q\in \JJ$, the left mate $\gamma_!\colon q_!p^*\Rightarrow u^*i_!$ and right mate $(\gamma^{-1})_*\colon u^*i_* \Rightarrow q_*p^* $ are invertible 2-cells of~$\bbA$.
\item \emph{Ambidexterity:} for every $i\in \JJ$, there is an isomorphism $i_!\simeq i_*$.
\end{enumerate}
Whenever convenient, we will suppose the choices of  $(\GG;\JJ)$ and $\bbA$ to be understood and will omit them from our terminology and notations.
\end{Def}

\begin{Exas} \label{Exas:bbA}
For the target 2-category~$\bbA$, the reader should keep in mind the following three examples, which will soon be review in~\Cref{sec:tensors}: 
the 2-category of all (possibly large) additive categories and additive functors; all $\kk$-linear additive categories and $\kk$-linear functors, over some commutative ring~$\kk$; or the 2-full 2-subcategory of idempotent complete categories.
\end{Exas}

\begin{Rem} \label{Rem:rectification}
By the Rectification Theorem \cite[Thm.\,1.2.1]{BalmerDellAmbrogio20}, every Mackey 2-functor for $(\GG;\JJ)$ as in \Cref{Exas:GGJJ} and with values in~$\bbA$ as in \Cref{Exas:bbA}  can be \emph{rectified} (note that this covers all concrete exemples mentioned in this article; see Sections \ref{sec:exa}-\ref{sec:origins}). 
What this means is that there exists a (unique) choice of specific adjunctions $i_! \dashv i^*$ and $i^*\dashv i_*$ for all~$i$, whose units and counits we denote by
\[
\leta\colon \Id \Rightarrow i^*i_! 
\qquad
\leps\colon i_!i^*\Rightarrow \Id
\qquad
\reta\colon \Id \Rightarrow i_*i^*
\qquad
\reps \colon i^*i_* \Rightarrow \Id ,
\]
which satisfy certain extra properties, including all of the following: For all $i\in \JJ$ the two adjoints $i_!=i_*$ are the same as 1-morphisms of~$\bbA$; 
the two induced pseudofunctors $\JJ^\mathrm{co}\to \bbA$ given by $i\mapsto i_!$ and $i\mapsto i_*$ agree; 
the \emph{strict Mackey formula} 
\begin{equation} \label{eq:strict-MF}
(\gamma_!)^{-1} = (\gamma^{-1})_*
\end{equation}
holds for every Mackey square as in~\eqref{eq:Mackey-square-in-Hyp}; and the \emph{special Frobenius relation}
\begin{equation} \label{eq:special-Frob}
\reps \circ \leta = \id_{\Id}
\end{equation}
holds for every $i\in \JJ$.
\end{Rem}

\begin{Hyp} \label{Hyp:rectification}
We will tacitly assume throughout the article that all Mackey 2-functors are rectified, \ie satisfy the extra conditions of \Cref{Rem:rectification}; this is because our proofs will make use of the relations~\eqref{eq:strict-MF} and~\eqref{eq:special-Frob}. 
As already mentioned, this hypothesis is actually redundant for all combinations of spannable pairs $(\GG;\JJ)$ and target 2-categories~$\bbA$ occurring in our concrete examples.
\end{Hyp}

\begin{Hyp} \label{Hyp:prods}
From \Cref{sec:pairings} onward, we will (mostly) assume that the 2-category $\GG$ admits arbitrary finite products and that products of 1-morphisms in~$\JJ$ are again in~$\JJ$. 
We call such spannable pairs \emph{Cartesian}. 
Of the pairs in \Cref{Exas:GGJJ}, $(\gpd;\gpd)$, $(\gpd;\gpdf)$ and $(\gpdG;\gpdG)$ are Cartesian, but $(\gpdf;\gpdf)$ is not because in this case~$\GG$ contains no nontrivial projection functors $G_1\times G_2\to G_i$.
The latter pair will be needed to handle a single but important example of Green 2-functor,  namely the stable module category (\Cref{Exa:quots}). 
\end{Hyp}

\section{Linear categories}
\label{sec:tensors}%

We briefly recall some generalities on linear categories.
Fix a commutative ring~$\kk$.

\begin{Cons}
\label{Rec:enriched-tensor}
If $\cat A, \cat B$ are two $\kk$-linear categories (\ie categories enriched in $\kk$-modules), their \emph{tensor product} $\cat A \otimes_\kk \cat B$ is the $\kk$-linear category with object-set $\Obj \cat A \times \Obj \cat B$, Hom spaces
$
(\cat A \otimes_\kk \cat B) ((A,B),(A',B')) :=  \cat A(A,A') \otimes_\kk  \cat B(B,B')
$,
and composition $(f \otimes g )(f'\otimes g')= (ff')\otimes (gg')$.
The (plain) functor $\cat A \times \cat B \to \cat A\otimes_\kk \cat B$ which is the identity on objects and $(f,g)\mapsto f\otimes g$ on maps is the \emph{universal $\kk$-bilinear} functor: It is $\kk$-linear in both variables separately, and for every $\kk$-linear category $\cat C$ it induces an equivalence (in fact an isomorphism)
\begin{equation} \label{eq:UP-tensor}
\Fun_\kk (\cat A \otimes_\kk \cat B, \cat C) \overset{\sim}{\longrightarrow}\Fun_{\kk\textrm{-bilin}} (\cat A \times \cat B , \cat C)
\end{equation}
between the category of $\kk$-linear functors on $\cat A\otimes_\kk \cat B$ and that of $\kk$-bilinear functors on $\cat A \times \cat B$ (with morphisms on both sides provided by natural transformations).
\end{Cons}

If the $\kk$-linear categories $\cat A,\cat B$ are additive (\ie admit all finite direct sums), there is no reason for their tensor product to be additive, and similarly with the property of being idempotent complete  (\cite[Def.\,A.6.9]{BalmerDellAmbrogio20}).
(Note in particular that the coordinate-wise constructions do not suffice: In $\cat A\otimes_\kk \cat B$, an object of the form $(\bigoplus_i A_i , \bigoplus_i B_i)$ is the direct sum $\bigoplus_{i,j} (A_i,B_j)$ rather than $\bigoplus_i(A_i,B_i)$; similarly, an idempotent endomorphism $e=e^2$ of $(A,B)$ is in general not simply of the form $p\otimes q$ for idempotents $p$ on~$A$ and $q$ on~$B$, hence it does not help to split idempotents in each coordinate.)
Luckily, simple universal constructions let us always adjoin finite direct sums and split subobjects.
Denote by $\CAT_\kk$ the (very large) 2-category of all (non-necessarily small) $\kk$-linear categories. 

\begin{Cons} [{Additive hull}]
\label{Cons:tensor-addhull}
The fully faithful inclusion $\ADD_\kk \hookrightarrow \CAT_\kk$ of the 2-subcategory of additive $\kk$-linear categories is reflexive: There is an \emph{additive hull} 2-functor $(-)^\oplus\colon \CAT_\kk \to \ADD_\kk$ as well as a family of $\kk$-linear functors $\cat C\to \cat C^\oplus$, pseudonatural in~$\cat C \in \CAT_\kk$, which induce equivalences
\[
\Fun_\kk (\cat C^\oplus , \cat D) \overset{\sim}{\longrightarrow} \Fun_\kk (\cat C, \cat D)
\]
for all additive $\kk$-linear categories~$\cat D$.
Concretely, the additive hull $\cat C^\oplus$ has finite lists of objects of~$\cat C$ for its objects and the evident matrix spaces for its Hom $\kk$-modules (see \eg \cite[\S2]{DellAmbrogioTabuada14} for details). 

\end{Cons}

\begin{Cons} [{Idempotent completion}]
\label{Cons:tensor-ic} 
The inclusion $\ADDic_\kk\hookrightarrow \ADD_\kk$ of the 2-subcategory of idempotent complete categories is reflexive: 
There is an \emph{idempotent completion} 2-functor $(-)^\natural\colon \ADD_\kk \to \ADDic_\kk$, as well as $\kk$-linear functors $\cat D\to \cat D^\natural$, pseudonatural in~$\cat D\in \ADD_\kk$, which induce equivalences
\[
\Fun_\kk ( \cat D^\natural , \cat E) \overset{\sim}{\longrightarrow} \Fun_\kk (\cat D, \cat E)
\]
for all idempotent complete additive $\kk$-linear~$\cat E$.
Concretely, the objects of $\cat D^\natural$ are objects of $\cat D$ endowed with an idempotent $e= e^2$, and its morphisms are those of $\cat D$ which absorb the idempotents on source and target (see \eg \cite[A.6.10]{BalmerDellAmbrogio20}).
\end{Cons}

\begin{Rem}
Constructions \ref{Rec:enriched-tensor}-\ref{Cons:tensor-ic} preserve the smallness of categories, hence restrict as operations on the respective full 2-subcategories $ \Cat_\kk \subset \CAT_\kk$,  $\Add_\kk\subset \ADD_\kk$ and $\Add^\ic_\kk \subset \ADD^\ic_\kk$ of those objects which happen to be small categories.
\end{Rem}

\begin{Rem} \label{Rem:ic-vs-addhull}
Note that the idempotent completion $(-)^\natural$ also makes sense for $\kk$-linear categories which are not additive, and that it preserves the property of being additive.
Thus $(\cat C^\oplus)^\natural$ is always additive and idempotent complete; but this is not true of $(\cat C^\natural )^\oplus$ (just consider the algebra $\cat C= \kk$). 
Usually, we only consider the property of being idempotent complete for categories which are already additive.
\end{Rem}

\begin{Def}
\label{Def:tensor-addic} 
We define the \emph{tensor product} of two additive $\kk$-linear categories $\cat A,\cat B$ to be the additive hull $(\cat A\otimes_\kk \cat B)^\oplus \in \ADD_\kk$, as in \Cref{Cons:tensor-addhull}, of the tensor product of \Cref{Rec:enriched-tensor}.
Similarly, we define the \emph{tensor product} of two idempotent complete (additive) $\kk$-linear categories $\cat A,\cat B$ to be the idempotent completion
 $((\cat A\otimes_\kk \cat B)^\oplus)^\natural$  as in \Cref{Cons:tensor-ic} of the additive hull; it belongs to $\ADDic_\kk$ by \Cref{Rem:ic-vs-addhull}.
In the following, we will simply write $\cat A\otimes \cat B$ for all three tensor products, counting on the context to clarify which one is meant.
\end{Def}


\begin{Rem} \label{Rem:UP-tensors}
By combining the universal property \eqref{eq:UP-tensor} with that of the 2-reflections $(-)^\oplus$ and $(-)^\natural$, we obtain analogous universal properties for the tensor product $\cat A \otimes \cat B$ of additive, resp.\ idempotent complete additive, $\kk$-linear categories. Namely, the canonical (plain) composite functor
$ \cat A\times \cat B \to \cat A \otimes \cat B $
is pseudonatural in $\cat A,\cat B\in \CAT_\kk$ and induces an equivalence 
\begin{equation}  \label{eq:UP-others}
\Fun_\kk (\cat A \otimes \cat B, \cat C) \overset{\sim}{\longrightarrow}\Fun_{\kk\textrm{-bilin}} (\cat A \times \cat B , \cat C)
\end{equation}
for all $\cat C \in \ADD_\kk$, resp.\ all $\cat C \in \ADDic_\kk$.
\end{Rem}

\begin{Exa}
\label{Exa:additive-tensor}
By specializing \Cref{Rec:enriched-tensor} and \Cref{Def:tensor-addic} to the case of the integers $\kk = \mathbb Z$, we respectively obtain the tensor product of preadditive (\ie $\mathbb Z$-linear), additive, and idempotent complete additive categories.
\end{Exa}

The next proposition says that each of the above three tensor products defines a symmetric tensor structure on the corresponding 2-category. 
More precisely, recall that a \emph{symmetric monoidal bicategory} is by definition a one-object tricategory in the sense of \cite{GPS95} equipped with a symmetry (see also \cite{McCrudden00} \cite[\S12]{JohnsonYau21}). A \emph{symmetric monoidal 2-category} is one whose underlying bicategory is strict.

\begin{Prop} \label{Prop:tensor-ADD}
Each of the (very large) 2-categories $\CAT_\kk$, $\ADD_\kk$ and $\ADD_\kk^\ic$ becomes a symmetric monoidal 2-category, when equipped with the tensor product of \Cref{Rec:enriched-tensor} or \Cref{Def:tensor-addic}, respectively. These structures restrict to the essentially small 2-categories $\Cat_\kk$, $\Add_\kk$ and $\Add_\kk^\ic$. The unit objects are, respectively: the commutative ring $\kk$ seen as a one-object $\kk$-category, the category $\kk^\oplus \simeq \fgfree(\kk)$ of finitely generated free $\kk$-modules, and the category $(\kk^\oplus)^\natural \simeq \fgproj (\kk)$ of finitely generated projective modules.
\end{Prop}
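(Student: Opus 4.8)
The plan is to construct the data of a symmetric monoidal 2-category on each of $\CAT_\kk$, $\ADD_\kk$, $\ADD_\kk^\ic$ by transporting the obvious "pointwise" symmetric monoidal structure along the universal properties established in \Cref{Rec:enriched-tensor} and \Cref{Rem:UP-tensors}, and then to verify that the coherence data of a one-object tricategory can be chosen. First I would treat $\CAT_\kk$ with the enriched tensor $\otimes_\kk$. The associator $(\cat A\otimes_\kk\cat B)\otimes_\kk\cat C\to \cat A\otimes_\kk(\cat B\otimes_\kk\cat C)$, the symmetry $\cat A\otimes_\kk\cat B\to\cat B\otimes_\kk\cat A$, and the left/right unitors $\kk\otimes_\kk\cat A\to\cat A$ are all defined by hand on objects (where $\otimes_\kk$ is just a product of object-sets, so these are literally the associativity/symmetry/unit isomorphisms of the Cartesian product of sets) and on Hom-$\kk$-modules (where they are the associativity/symmetry/unit isomorphisms of $\otimes_\kk$ of $\kk$-modules). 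These are $\kk$-linear functors, not merely pseudofunctors, and they are genuine isomorphisms of categories; functoriality in $\cat A,\cat B,\cat C$ is immediate. Because everything is built from the already-coherent symmetric monoidal structure $(\Mod\kk,\otimes_\kk,\kk)$ applied Hom-wise, the required invertible modifications (pentagonator, hexagonators, the $2$-$3$ unit modifications, the syllepsis) are obtained simply by applying the corresponding coherence isomorphisms of $(\Mod\kk,\otimes_\kk)$ on each Hom-module, and the tricategory axioms (non-abelian $4$-cocycle, etc.) then hold because they hold Hom-wise in $\Mod\kk$. Since all these structure functors are strict (identity-on-objects, or the set-level reshuffle), the underlying bicategory is strict $2$-categorical, so we get a \emph{symmetric monoidal 2-category}.

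Next I would pass to $\ADD_\kk$ and $\ADD_\kk^\ic$ with $\cat A\otimes\cat B:=(\cat A\otimes_\kk\cat B)^\oplus$, resp.\ $((\cat A\otimes_\kk\cat B)^\oplus)^\natural$. Here the cleanest route is \emph{not} to unravel $(-)^\oplus$ and $(-)^\natural$ but to use their reflective universal properties. The composite $\cat A\times\cat B\to\cat A\otimes_\kk\cat B\to\cat A\otimes\cat B$ is $\kk$-bilinear, and by \eqref{eq:UP-others} restriction along it gives an equivalence $\Fun_\kk(\cat A\otimes\cat B,\cat C)\isotoo\Fun_{\kk\textrm{-bilin}}(\cat A\times\cat B,\cat C)$ for all $\cat C$ in the relevant target 2-category. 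One then defines the associator, symmetry and unitors for $\otimes$ as the $\kk$-linear functors \emph{corresponding} under this equivalence to the evidently-bilinear (resp.\ trilinear) "pointwise" maps; concretely, the associator $(\cat A\otimes\cat B)\otimes\cat C\to\cat A\otimes(\cat B\otimes\cat C)$ is the essentially unique $\kk$-linear extension of the $\kk$-trilinear functor $\cat A\times\cat B\times\cat C\to\cat A\otimes(\cat B\otimes\cat C)$, where I use the three-variable version of \eqref{eq:UP-others} (obtained by iterating the two-variable one, using that $(-)^\oplus$ and $(-)^\natural$ commute with finite products of categories up to the canonical comparison). All the coherence $2$-cells (pentagonator, hexagonators, syllepsis, units) are likewise transported: the universal property turns the Hom-wise coherences of the $\CAT_\kk$-structure into the required invertible modifications, and it turns the tricategory/symmetry axioms — which are equalities of $2$-cells — into equalities that one checks after restricting along the jointly-essentially-surjective-on-objects bilinear functor $\cat A\times\cat B\to\cat A\otimes\cat B$, where they reduce to the already-verified identities in $\CAT_\kk$. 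That the associators/symmetries so obtained are in fact \emph{strict} as maps of bicategories is because $(-)^\oplus$ and $(-)^\natural$, applied to the strict structure functors of $\CAT_\kk$, produce $\kk$-linear functors that are still strictly (identity-on-objects up to the canonical list/idempotent bookkeeping) compatible with composition; hence again we land in symmetric monoidal \emph{2}-categories, not just bicategories.

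For the unit objects: in $\CAT_\kk$ the unit is $\kk$ as a one-object $\kk$-category, and the unitors $\kk\otimes_\kk\cat A\cong\cat A$ are visibly isomorphisms of categories (the object-set is $\{*\}\times\Obj\cat A$ and Hom-modules are $\kk\otimes_\kk\cat A(A,A')\cong\cat A(A,A')$). Applying $(-)^\oplus$ to the unit gives $\kk^\oplus$, and the equivalence $\kk^\oplus\simeq\fgfree(\kk)$ is the standard one sending a list $(*,\dots,*)$ of length $n$ to $\kk^n$; applying $(-)^\natural$ on top gives $(\kk^\oplus)^\natural\simeq\fgproj(\kk)$, since finitely generated projective $\kk$-modules are exactly the direct summands of finitely generated free ones. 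The unit coherence then transports as above. Finally, restriction to the essentially small versions $\Cat_\kk$, $\Add_\kk$, $\Add_\kk^\ic$ is automatic because Constructions \ref{Rec:enriched-tensor}–\ref{Cons:tensor-ic} preserve smallness (as already noted in the excerpt), so the whole structure restricts verbatim.

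The main obstacle is purely bookkeeping: a symmetric monoidal bicategory is a one-object tricategory with symmetry, so in principle one owes the pentagonator, the two hexagonators, three unit modifications, the syllepsis, and a long list of coherence axioms among them. The honest work is to organize the argument so that one \emph{never} writes these down explicitly for $\otimes$, but instead produces them once for the Hom-wise structure on $\CAT_\kk$ (where they are literally the coherence data of $(\Mod\kk,\otimes_\kk,\kk)$ applied on Homs and therefore free) and then invokes the reflective universal properties \eqref{eq:UP-tensor}, \eqref{eq:UP-others} — together with the fact that $(-)^\oplus$ and $(-)^\natural$ are (symmetric) monoidal reflections, i.e.\ the comparison maps $(\cat C^\oplus)\otimes_\kk(\cat D^\oplus)\to(\cat C\otimes_\kk\cat D)^\oplus$ etc.\ are equivalences — to descend everything to $\ADD_\kk$ and $\ADD_\kk^\ic$ in one stroke. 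So I would phrase the proof as: (i) give the strict symmetric monoidal 2-structure on $\CAT_\kk$ by Hom-wise transport from $\Mod\kk$; (ii) observe $(-)^\oplus:\CAT_\kk\to\ADD_\kk$ and $(-)^\natural:\ADD_\kk\to\ADD_\kk^\ic$ are symmetric monoidal 2-functors that are left 2-adjoint to fully faithful inclusions; (iii) conclude that a localizing/reflective symmetric monoidal 2-category inherits the structure on the reflective sub-2-category, read off the units as above, and note smallness is preserved. Each of (i)–(iii) is routine; the only place to be careful is checking in (ii) that the monoidal comparison maps for $(-)^\oplus$ and $(-)^\natural$ are equivalences, which is the content of (and the reason for) \Cref{Def:tensor-addic} and \Cref{Rem:ic-vs-addhull}.
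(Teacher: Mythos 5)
Your proposal is correct in outline, but it organizes the argument differently from the paper. The paper starts from the \emph{Cartesian} symmetric monoidal structure on the 2-category of categories, which exists for any bicategory with finite products by \cite{CKWW07}, and then, uniformly for all three cases $\CAT_\kk$, $\ADD_\kk$, $\ADD_\kk^\ic$, extends every piece of structure (associators, unitors, symmetry, pentagonator, hexagonators, syllepsis) from $\times$ to $\otimes$ along the universal bilinear functors $\cat A\times\cat B\to\cat A\otimes\cat B$, via \eqref{eq:UP-tensor} and \eqref{eq:UP-others}; the coherence axioms then follow from the essential uniqueness of these extensions. You instead construct the structure on $(\CAT_\kk,\otimes_\kk)$ directly, exploiting that the associator, symmetry and unitors are honest isomorphisms of categories inherited object-wise from products of sets and Hom-wise from $(\kk\MMod,\otimes_\kk,\kk)$, so the higher coherence cells can be taken trivial, and only then descend to $\ADD_\kk$ and $\ADD_\kk^\ic$ through the reflections $(-)^\oplus$ and $(-)^\natural$ regarded as monoidal. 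Your route makes the base case elementary and avoids the Cartesian monoidal bicategory theorem, at the price of having to verify that the two reflections are compatible with the tensors; the paper's route treats the three 2-categories on the same footing and transports everything in one step from $\times$, never isolating the reflections as monoidal 2-functors. Both are sketches leaving comparable coherence bookkeeping to the reader. One small imprecision to fix: the comparison functor $(\cat C^\oplus)\otimes_\kk(\cat D^\oplus)\to(\cat C\otimes_\kk\cat D)^\oplus$ is not an equivalence as you state it, since its source need not be additive; the correct statement, which does follow from the universal properties and is what your step (ii) actually needs, is that it becomes an equivalence after taking the additive hull of the source, i.e.\ that $\cat C^\oplus\otimes\cat D^\oplus\simeq(\cat C\otimes_\kk\cat D)^\oplus$ in $\ADD_\kk$, and similarly for $(-)^\natural$.
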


\begin{proof}
This must be well-known to the experts, but let us sketch a proof as details are scarce in the literature.
Like any other bicategory with finite products, the (very large) 2-category $\CAT$ admits the Cartesian symmetric monoidal structure; see \cite[Thm.\,2.15]{CKWW07}. 
(In fact, $\CAT$ even has finite products in the strict sense, enjoying a universal property up to isomorphism of Hom-categories, so the Cartesian structure is easier to establish here than in general.)
This means that we have two pseudofunctors $\times$ and $1$ for the product and unit, as well as: an associator and left and right unitor adjoint equivalences; a pentagonator and 2-unitor invertible 2-cells; a symmetry adjoint equivalence $\sigma\colon \cat A\times \cat B \overset{\sim}{\to} \cat B \times \cat A$; left and right hexagonator invertible 2-cells; syllepsis invertible 2-cells $\sigma_{\cat B, \cat A}\circ \sigma_{\cat A, \cat B}\overset{\sim}{\Rightarrow} \Id_{\cat A\otimes \cat B}$.
Then for $\mathbb A\in \{ \CAT_\kk, \ADD_\kk,\ADD_\kk^\ic \}$, the universal property of the corresponding tensor product $\otimes$ ensures that we may extend from $\times$ to~$\otimes$ along \eqref{eq:UP-tensor} or \eqref{eq:UP-others}, respectively, all the aforementioned structure. 
It also follows from the  uniqueness of the extended 1- and 2-cells that they still satisfy the numerous necessary coherence relations. 
We leave the lengthy verifications to the reader. 
To do this, it is wise to first upgrade the equivalences \eqref{eq:UP-others} to adjoint equivalences and to exploit that \eqref{eq:UP-tensor} is an isomorphism. 
It also helps that the underlying bicategory $\mathbb A$ is strict. 
\end{proof}

\begin{Rem} \label{Rem:unit-objects}
In the tensor 2-category $\CAT_\kk$, a $\kk$-linear functor $F\colon \Unit=\kk \to \cat A$ on the tensor unit is uniquely determined by an object of~$\cat A$, namely the image by $F$ of the unique object of the $\kk$-category~$\kk$. Similarly in $\ADD_\kk$ and $\ADD_\kk^\ic$, where the tensor unit $\Unit$ is either $\fgfree(\kk)$ or $\fgproj(\kk)$, a $\kk$-linear (hence additive) functor $F\colon \Unit \to \cat A$ is uniquely determined up to equivalence by the object $F(\kk)\in \cat A$.
In the following, we will tacitly and notationally identify the 1-morphism $F\colon \Unit \to \cat A$ with the corresponding object of~$\cat A$.
\end{Rem}

\begin{Rem} \label{Rem:int-hom-distr}
The tensor 2-categories $\CAT_\kk$, $\ADD_\kk$ and $\ADD_\kk^\ic$ admit internal Homs. 
Indeed, the category $\Fun_\kk(\cat B,\cat C)$ of $\kk$-linear functors and natural transformations between any two $\kk$-linear categories is again $\kk$-linear, and is additive or idempotent complete whenever the target~$\cat C$ is. Besides, the natural equivalence
\[
\Fun_\kk(\cat A\otimes \cat B, \cat C) \simeq \Fun_\kk (\cat A , \Fun_\kk (\cat B, \cat C))
\]
is $\kk$-linear. In particular, tensoring with any object has a right 2-adjoint and therefore preserves direct sums, which are in particular products and coproducts. More precisely, 
 the projection functors $\cat B \oplus \cat C= \cat B\times \cat C\to \cat B$ and $\cat B \oplus \cat C = \cat B\times \cat C\to \cat C$
induce in all three cases an equivalence
$
\cat A \otimes (\cat B \oplus \cat C) \overset{\sim}{\to} (\cat A \otimes \cat B ) \oplus (\cat A \otimes \cat C)
$.
\end{Rem}

In a monoidal 2-category, it makes sense to speak of monoids and actions which are associative and unital only  up to coherent isomorphisms:

\begin{Def}[{\cite{DayStreet97} \cite{McCrudden00}}] \label{Def:pseudo-mon}
A \emph{pseudomonoid} $\cat M = (\cat M, m, u, \alpha, \lambda , \rho)$  in a monoidal 2-category~$\bbA$ is an object $\cat M$ of $\bbA$ together with a ``product'' 1-morphism $m\colon \cat M\otimes \cat M\to \cat M$, a ``unit'' 1-morphism $u\colon \Unit \to \cat M$, and invertible 2-morphisms
\[
\vcenter{
\xymatrix@!=14pt{
& & \cat M\otimes \cat M \ar@{}[dd]|{\Scell\; \alpha} \ar[drr]^{m } & & \\
(\cat M \otimes \cat M)\otimes \cat M
  \ar[urr]^{m \otimes \Id}
   \ar[dr]_\simeq & & & & \cat M \\
& \cat M \otimes (\cat M \otimes \cat M) \ar[rr]^-{\Id \otimes m} & & \cat M \otimes \cat M \ar[ur]_m & 
}}
\quad
\vcenter{
\xymatrix@C=14pt{
 &
\cat M \otimes \cat M 
  \ar@{<-}[dl]_-{u \otimes \Id}
  \ar@{<-}[dr]^-{\Id \otimes u}
  \ar[dd]_(.35)m
  \ar@{}[ddl]|{\;\;\;\;\lambda\,\SWcell}
  \ar@{}[ddr]|{\SEcell\,\rho\;\;\;\;} & \\
\Unit \otimes \cat M 
  \ar[dr]_\simeq &  &
\cat M \otimes \Unit
  \ar[dl]^\simeq \\
& \cat M & 
}}
\]
satisfying MacLane's coherence axioms for associators and left and right unitors. 
Suppose $\bbA$ is symmetric with symmetry~$\sigma$. A pseudomonoid is said to be \emph{braided} if it is equipped with a \emph{braiding}, \ie with an invertible 2-cell $\beta \colon m \Rightarrow m \circ \sigma$ satisfying the usual braid axioms.
The pseudomonoid is \emph{symmetric} if the braiding is a symmetry, \ie it satisfies $\beta^2 = \id_m$.
There are 2-categories $\PsMon(\bbA)$, $\BrPsMon(\bbA)$ and $\SymPsMon(\bbA)$ whose objects are the pseudomonoids, braided pseudomonoids and symmetric pseudomonoids in~$\bbA$. 
The (``strong'') 1-morphisms $f= (f,f^2,f^0)\colon (\cat M, m, \alpha, \lambda, \rho)\to (\cat M', m', \alpha', \lambda', \rho')$ in $\PsMon(\bbA)$ are 1-morphisms $f\colon \cat M\to \cat N$ of the underlying objects of~$\bbA$ equipped with 2-isomorphisms $f^2\colon m' \circ (f \otimes f)\Rightarrow f\circ m$ and $f^0\colon u' \Rightarrow f \circ u$ satisfying suitable associativity and unitality axioms. A 2-morphism of such is a 2-morphism of the underlying 1-morphisms $f$ which is compatible with the extra structure $f^2,f^0$. In the braided and symmetrical cases, 1- and 2-morphisms are required to be compatible with~$\beta$. 

Analogously, one can define a \emph{left module} over a pseudomonoid $\cat M$ in~$\bbA$ to be an object $\cat N$ equipped with a \emph{left pseudoaction} of~$\cat M$, \ie with a 1-morphism $\ell\colon \cat M\otimes \cat N\to \cat N$ together with coherent associativity and left-unit 2-isomorphisms $\alpha$ and~$\lambda$ (similarly as above). Then 1-morphisms and 2-morphisms of left modules are defined similarly to those for pseudomonoids, and of course one can also define \emph{right} modules and their 1- and 2-morphisms.

\end{Def}

\begin{Exa} \label{Exa:monADD}
A (braided, symmetric) pseudomonoid in $\bbA = \ADD_\kk$ or $\ADDic_\kk$ amounts, via \Cref{Rem:UP-tensors}, to an (idempotent complete) additive $\kk$-linear category $\cat M$ equipped with a (braided, symmetric) monoidal structure such that its tensor product $\otimes\colon \cat M\times \cat M\to \cat M$ is $\kk$-linear in both variables. A 1-morphism between such is a $\kk$-linear (braided) strong monoidal functor, and a 2-morphism a (braided) monoidal natural transformation. 
In this context, modules are often called \emph{(monoidal) category modules} or \emph{module categories}.
\end{Exa}

\section{Internal vs external pairings}
\label{sec:pairings}%

Our definition of Green 2-functors will require us to understand how internal and external pairings of 2-functors are related. This is a very general yoga.

\begin{Hyp}
In this section we suppose $\mathbb A$ to be a (possibly very large) symmetric monoidal 2-category. 
Its tensor product and unit will  be denoted by $\otimes\colon \mathbb A\times \mathbb A\to \mathbb A$ and~$\Unit\colon 1\to \bbA$.
The examples to keep in mind are $\ADD_\kk$ and $\ADD_\kk^\ic$ as in \Cref{Prop:tensor-ADD}.
We will write $\ADDick$ to indicate either choice of these two. 
\end{Hyp}

\begin{Ter}
Given three 2-functors $\cat M, \cat N, \cat L\colon \GG^\op\to \mathbb A$, we can consider the following two composite 2-functors of two variables:
\[
\xymatrix{
\GG^\op\times \GG^\op \ar[rr]^-{\cat M\times \cat N} \ar@/_2ex/[drr]_(.4){\cat M \,\boxtimes\, \cat N\,:=} &&
 \mathbb A \times \mathbb A \ar[d]^{\otimes} \\
&& \mathbb A
}
\quad\quad\quad
\xymatrix{
\GG^\op\times \GG^\op \ar[rr]^-{\times} \ar@/_2ex/[drr]_(.4){\cat L(- \times -)\,:=} &&
 \GG^\op \ar[d]^{\cat L} \\
&& \mathbb A
}
\]
We will occasionally call $\cat M\boxtimes \cat N$ the \emph{external} tensor product of $\cat M$ and~$\cat N$. 
The \emph{internal} (or \emph{diagonal}) tensor product of $\cat M$ and~$\cat N$ is the composite $\cat M\otimes \cat N := (\cat M \boxtimes \cat N)\circ \Delta $,
where $\Delta \colon \GG^\op \to \GG^\op \times \GG^\op$ is the diagonal 2-functor $G\mapsto (G,G)$.
\end{Ter}
 
\begin{Prop} \label{Prop:tensor-pre-Green}
The internal tensor product $(\cat M, \cat N)\mapsto \cat M \otimes \cat N$ turns the 2-category $\2Fun(\GG^\op,\bbA)$ of 2-functors, pseudonatural transformations and modifications into a symmetric monoidal 2-category, where all structural equivalences are defined diagonally.
\end{Prop}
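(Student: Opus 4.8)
The plan is to transport the symmetric monoidal 2-category structure of $\bbA$ to $\2Fun(\GG^\op,\bbA)$ ``pointwise'', exploiting that all structure is defined diagonally. First I would observe that $\GG^\op$ carries a Cartesian structure of its own (by \Cref{Hyp:prods}, $\GG$ has finite products), but in fact the only thing we need here is that the diagonal $\Delta\colon \GG^\op\to\GG^\op\times\GG^\op$ is a strict 2-functor and that the category $\2Fun(\GG^\op,\bbA)$ is the genuine strict 2-category of strict 2-functors, pseudonatural transformations, and modifications. The key point is that $\2Fun(\GG^\op,-)$ sends a symmetric monoidal 2-category to a symmetric monoidal 2-category, with everything defined ``objectwise'': given 2-functors $\cat M, \cat N$, set $(\cat M\otimes\cat N)(G) := \cat M(G)\otimes\cat N(G)$, and for a 1-morphism $u\colon H\to G$ of $\GG$ set $(\cat M\otimes\cat N)(u) := \cat M(u)\otimes\cat N(u)$, using the functoriality of $\otimes\colon\bbA\times\bbA\to\bbA$ in both variables (it is a pseudofunctor, so this is coherently 2-functorial up to the constraints of $\otimes$; since $\bbA$ is a \emph{strict} 2-category in our examples one can even arrange strictness, but the general argument does not need this). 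I would then set the unit 2-functor to be the constant 2-functor at $\Unit\in\bbA$ (equivalently, $G\mapsto\Unit$).

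Next I would construct the coherence 1-cells and 2-cells by applying the corresponding data from $\bbA$ objectwise. For instance the associator pseudonatural equivalence $a_{\cat M,\cat N,\cat L}\colon (\cat M\otimes\cat N)\otimes\cat L\Rightarrow\cat M\otimes(\cat N\otimes\cat L)$ has as its component at $G$ the associator $a_{\cat M(G),\cat N(G),\cat L(G)}$ of $\bbA$, and the pseudonaturality 2-cells are obtained from the pseudonaturality of $a$ in $\bbA$ together with the interchange constraints of $\otimes$; similarly for the unitors, pentagonator, 2-unitors, symmetry, hexagonators, and syllepsis. The verification that these assemble into genuine pseudonatural transformations (resp.\ modifications) between 2-functors on $\GG^\op$, and that they satisfy the tricategorical coherence axioms of a symmetric monoidal bicategory in the sense of \cite{GPS95}, \cite{McCrudden00}, is a matter of checking each axiom componentwise: at every object $G$ the required equation is precisely the corresponding axiom holding in $\bbA$, while the compatibility with the pseudonaturality/modification structure reduces to naturality squares that commute because they did in $\bbA$. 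Since every structural 1- and 2-cell of $\2Fun(\GG^\op,\bbA)$ is, at each $G$, the corresponding structural cell of $\bbA$, and the latter is symmetric monoidal, every coherence diagram commutes objectwise, hence commutes.

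I would phrase this cleanly by noting that $(-)(G)\colon\2Fun(\GG^\op,\bbA)\to\bbA$ is, for each $G$, a (strict) 2-functor that \emph{creates} the relevant limits/structure: a diagram of pseudonatural transformations commutes iff it commutes after evaluation at every $G$, because a modification is an identity iff all its components are. This ``jointly conservative family of evaluation 2-functors'' principle lets one deduce all tricategory axioms for $\2Fun(\GG^\op,\bbA)$ from those of $\bbA$ at one stroke, rather than expanding each pentagon, hexagon, and syllepsis axiom by hand.

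The main obstacle I expect is purely bookkeeping: spelling out that the objectwise tensor product $\cat M\otimes\cat N$ is genuinely a strict 2-functor $\GG^\op\to\bbA$ (not merely a pseudofunctor) and that the objectwise associator etc.\ are genuine \emph{pseudonatural} transformations. This is where the interchange/functoriality constraints of $\otimes\colon\bbA\times\bbA\to\bbA$ enter; when $\bbA$ is a strict 2-category with $\otimes$ a strict 2-functor (as holds for $\ADD_\kk$ and $\ADD_\kk^\ic$, by the construction underlying \Cref{Prop:tensor-ADD}, once one works with the strictified models) these constraints are identities and the check is immediate, so in practice I would just remark that one reduces to the strict case and leave the (lengthy, routine) coherence verifications to the reader, exactly as was done in the proof of \Cref{Prop:tensor-ADD}.
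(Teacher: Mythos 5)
Your proposal is correct and follows essentially the same route as the paper: the paper likewise defines the tensor product as the composite $\2Fun(\Delta,\otimes)\circ\times$ (i.e.\ objectwise/diagonally), takes the constant 2-functor at $\Unit$ as unit, and declares all coherent structural equivalences and 2-isomorphisms to be defined objectwise in~$\bbA$, leaving the componentwise coherence verifications to the reader. Your additional remark that modifications are identities iff their components are (so coherence can be checked at each $G\in\GG$) is exactly the implicit justification behind the paper's ``in the only way that makes sense'' sketch.
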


\begin{proof}
 More precisely, the composite pseudofunctors
 \[
 \xymatrix@C=17pt{
 \2Fun(\GG^\op, \bbA) \times  \2Fun(\GG^\op, \bbA) 
  \ar[r]^-{\times} & 
\2Fun(\GG^\op \times \GG^\op, \bbA \times \bbA) 
  \ar[rr]^-{\2Fun(\Delta, \otimes)} &&  
\2Fun(\GG^\op, \bbA)
 }
 \]
and $\GG^\op \to 1 \overset{\Unit}{\to} \bbA$ provide the tensor product and the unit, respectively. 
The coherent structural adjoint equivalences and 2-isomorphisms are all defined objectwise in~$\mathbb A$, in the only way that makes sense. 
We leave details to the reader.
\end{proof}

Let us record the following generality on pseudomonoids (a similar result, whose formulation we leave to the reader, holds for left and right modules):

\begin{Prop} \label{Prop:Green-lifting}
Let $\cat M\in \2Fun(\GG^\op,\bbA)$. 
To give a pseudomonoid structure on $\cat M$ (\Cref{Def:pseudo-mon}) is equivalent to choosing a lifting of the 2-functor $\cat M\colon \GG^\op\to \bbA$  to the 2-category $\PsMon(\bbA)$ of pseudomonoids in~$\bbA$ along the forgetful 2-functor $\PsMon(\bbA)\to \bbA$. 
If $\cat M$ is braided (resp.\ symmetric), the lifting is to braided (symmetric) pseudomonoids and their 1- and 2-morphisms.
\end{Prop}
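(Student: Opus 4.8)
The plan is to reduce everything to the single fact, established in the proof of \Cref{Prop:tensor-pre-Green}, that the internal tensor product on $\2Fun(\GG^\op,\bbA)$ together with its unit and \emph{all} of its structural adjoint equivalences, invertible modifications and symmetry are defined objectwise in $\bbA$. First I would reformulate this as two observations: (i) for every object $G$ of $\GG$ the evaluation $2$-functor $\mathrm{ev}_G\colon\2Fun(\GG^\op,\bbA)\to\bbA$, $\cat F\mapsto\cat F(G)$, carries a canonical (strict) symmetric monoidal structure, since it strictly preserves $\otimes$, the unit, and every structural $1$- and $2$-cell; and (ii) the family $(\mathrm{ev}_G)_G$ is jointly faithful and jointly reflects $2$-cells and equations of $2$-cells, a modification between pseudonatural transformations being precisely a coherent family of $2$-cells in $\bbA$.

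Next I would invoke that any symmetric monoidal $2$-functor transports (braided, symmetric) pseudomonoids and their morphisms to the same: so a pseudomonoid structure $(\cat M,m,u,\alpha,\lambda,\rho)$ on $\cat M$ produces, for each $G$, a pseudomonoid $\bigl(\cat M(G),m_G,u_G,\alpha_G,\lambda_G,\rho_G\bigr)$ in $\bbA$, where $m_G,u_G$ are the components at $G$ of the pseudonatural transformations $m,u$ and $\alpha_G,\lambda_G,\rho_G$ are the components at $G$ of the modifications $\alpha,\lambda,\rho$. By joint faithfulness, conversely, a pseudomonoid structure on $\cat M$ \emph{is}, with no loss of information, such a family of objectwise pseudomonoids together with the remaining data, namely the pseudonaturality $2$-isomorphisms of $m$ and $u$ at each $1$-cell of $\GG$ and the modification axioms of $\alpha,\lambda,\rho$ at each $1$-cell and $2$-cell. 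I would then identify this remaining data with a lifting $\widetilde{\cat M}\colon\GG^\op\to\PsMon(\bbA)$ of $\cat M$: for a $1$-cell $w\colon H\to G$ of $\GG$, the pseudonaturality $2$-isomorphism of $m$ at $w$ is (up to inversion) a $2$-cell $m_H\circ(\cat M(w)\otimes\cat M(w))\Rightarrow\cat M(w)\circ m_G$ and that of $u$ at $w$ is a $2$-cell $u_H\Rightarrow\cat M(w)\circ u_G$, and together these are exactly a strong monoidal structure on the restriction functor $\cat M(w)\colon\cat M(G)\to\cat M(H)$. The coherence axioms of a pseudonatural transformation — compatibility of these $2$-cells with vertical composition and identities of $1$-cells of $\GG$, and with whiskering by $2$-cells of $\GG$ — then become verbatim the statements that the strong monoidal structure on a composite $\cat M(w)\circ\cat M(v)=\cat M(vw)$ agrees with the one just described, that $\cat M(\id)$ is strictly monoidal, and that each $\cat M(\theta)\colon\cat M(w)\Rightarrow\cat M(w')$ is a monoidal $2$-morphism; the remaining modification axioms for $\alpha,\lambda,\rho$ say these $2$-morphisms are compatible with the monoidal structures. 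Reading this in both directions yields mutually inverse bijections between pseudomonoid structures on $\cat M$ and strict liftings of $\cat M$ along $\forget\colon\PsMon(\bbA)\to\bbA$.

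For the braided and symmetric variants I would argue identically. A braiding on the pseudomonoid $\cat M$ in $\2Fun(\GG^\op,\bbA)$ is an invertible $2$-cell $\beta\colon m\Rightarrow m\circ\sigma$, with $\sigma$ the symmetry of $\2Fun(\GG^\op,\bbA)$; since $\mathrm{ev}_G$ is symmetric monoidal it sends $\sigma$ to the symmetry of $\bbA$, so each component $\beta_G$ is a braiding on $\cat M(G)$, the braid axioms holding because they hold objectwise, and the remaining $2$-cell axioms for $\beta$ along $1$-cells of $\GG$ say precisely that each $\cat M(w)$ is a \emph{braided} strong monoidal functor (the condition along $2$-cells being automatic). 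Hence $\widetilde{\cat M}$ lifts to $\BrPsMon(\bbA)$, and $\beta^2=\id_m$ holds iff $\beta_G^2=\id_{m_G}$ for every $G$, upgrading the lift to $\SymPsMon(\bbA)$. The analogous statement for left and right modules follows by the same pattern, replacing the pseudomonoid data by pseudoaction data.

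The one genuinely non-formal step, and the place I expect the work to be, is the dictionary in the third paragraph: matching the coherence axioms of the pseudonatural transformations $m$ and $u$ against the axioms for strong (braided) monoidal functors and for monoidal $2$-morphisms. Everything else is bookkeeping on top of \Cref{Prop:tensor-pre-Green}, but this translation requires a careful, axiom-by-axiom comparison of pasting diagrams rather than an appeal to a general slogan, and I would present it as such.
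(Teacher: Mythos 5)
Your proposal is correct and follows essentially the same route as the paper: the paper's proof is exactly the componentwise unpacking you describe, identifying the $1$-cell components of $\odot^{\cat M},\Unit^{\cat M}$ with objectwise pseudomonoid structures, their $2$-cell components (up to the inversion you note, coming from the colax orientation convention) with strong monoidal structures on the restriction functors $u^*$, and the modifications $\alpha,\lambda,\rho$ with the objectwise coherence data, leaving the axiom-by-axiom matching as a straightforward verification just as you flag.
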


\begin{proof}
Direct from the definitions. 
Concretely, and for future reference, if we are  given a pseudomonoid $\cat M$ in $\2Fun(\GG^\op,\bbA)$, its multiplication and unit are pseudonatural transformations $\odot^\cat M\colon \cat M\otimes \cat M \to \cat M$ and $\Unit^\cat M\colon \Unit \to \cat M$ whose components 
$\odot^\cat M_G\colon \cat M(G)\otimes \cat M(G)\to \cat M(G)$
and
$\Unit^\cat M_G\colon \Unit \to \cat M(G)$ 
at $G\in \Obj \GG$ are 1-cells of $\mathbb A$, 
and whose components
\begin{equation} \label{eq:2-cell-comp}
\vcenter{
\xymatrix{
\cat M(G) \otimes \cat M(G)
  \ar[r]^-{\odot^\cat M_G}
  \ar[d]_{u^*\otimes u^*}  &
\cat M(G)
  \ar[d]^{u^*}
  \ar@{}[dl]|{{}^\simeq\;\SWcell\;\odot^\cat M_u} \\
\cat M(H) \otimes \cat M(H)
  \ar[r]_-{\odot^\cat M_H}  &
\cat M(H)
}}
\quad\quad \textrm{and} \quad\quad
\vcenter{
\xymatrix{
\Unit_\mathbb A
  \ar[r]^-{\Unit^\cat M_G}
  \ar@{=}[d] & 
\cat M(G)
 \ar[d]^{u^*}
 \ar@{}[dl]|{{}^\simeq \; \SWcell \; \Unit^\cat M_u}  \\
\Unit_\mathbb A
 \ar[r]_-{\Unit^\cat M_H} &
\cat M(H) 
}}
\end{equation}
at each $u\colon H\to G$ are invertible 2-cells of~$\mathbb A$.
The 1-cell components provide the multiplication and unit of pseudomonoids $(\cat M(G), \odot^\cat M_G, \Unit^\cat M_G)$, whose coherent associativity and unitality isomorphisms $\alpha_G,\lambda_G,\rho_G$ are the $G$-components of those of the given monoid $\cat M$ (which are modifications).
The 2-cell components~\eqref{eq:2-cell-comp}, on the other hand, equip each 1-cell $u^*= \cat M(u)$ with the structure
\[ 
(\odot^\cat M_u)^{-1} \colon \odot^\cat M_H \circ (u^* \otimes u^*)  \overset{\sim}{\Longrightarrow} u^* \circ \odot^\cat M_G
\quad\quad\quad
(\Unit^\cat M_u)^{-1} \colon \Unit^\cat M_H  \overset{\sim}{\Longrightarrow} u^* \circ \Unit^\cat M_G
\] 
of a strong 1-morphism of pseudomonoids.
(The inversion is because, to agree with \cite{BalmerDellAmbrogio20}, we orient the 2-cell components of our pseudonatural transformations in the \emph{colax} direction, but we prefer to orient strong morphisms in the \emph{lax} direction, as is commonly done.)
The remaining verifications are straightforward.
\end{proof}

\begin{Rem} \label{Rem:ADDick-case}
If $\bbA= \ADDick$, the lifting lands in the 2-category $\PsMon(\bbA)$  of (idempotent complete) additive $\kk$-linear monoidal categories, $\kk$-linear strong tensor functors, and monoidal natural transformations; and similarly with braided or symmetric monoidal categories, functors and transformations;
\cf \Cref{Exa:monADD}.
\end{Rem}

\begin{Rem} \label{Rem:2Mack-no-tensor}
Suppose $\bbA=\ADDick$. 
Then the internal tensor product of two additive 2-functors $\cat M,\cat N\colon \GG^\op\to \bbA$ is almost never additive. 
 In particular the 2-subcategory $\biMack(\GG;\JJ) \subset \2Fun(\GG^\op,\bbA)$ of Mackey 2-functors (\cite[\S6.3]{BalmerDellAmbrogio20}) is not a tensor 2-subcategory of that of \Cref{Prop:tensor-pre-Green}. 
The reason is that, in these examples, the tensor product distributes over direct sums (\Cref{Rem:int-hom-distr}), so that
\begin{align*}
(\cat M\otimes \cat N)(G_1\sqcup G_2) 
&= \cat M(G_1 \sqcup G_2) \otimes \cat N(G_1 \sqcup G_2) \\
& \simeq \big( \cat M(G_1) \oplus \cat M(G_2) \big) \otimes \big( \cat N(G_1) \oplus \cat N(G_2) \big)
\end{align*}
is typically not equivalent to 
\begin{align*}
(\cat M\otimes \cat N)(G_1) \oplus (\cat M \otimes \cat N)(G_2) 
&= \big( \cat M(G_1) \otimes \cat N(G_1) \big) \oplus \big( \cat M(G_2) \otimes \cat N(G_2) \big)
\end{align*}
for additive $\cat M,\cat N$ and general $G_1,G_2\in \GG$, except in trivial cases.
Similarly, the tensor unit of $\2Fun(\GG^\op,\ADDick)$, \ie the constant 2-functor $G\mapsto \cat M(G)= \Unit$, is not additive because $\Unit \oplus \Unit \not\simeq \Unit$. 
The underlying problem is that the tensor structure on $\bbA$ we are using here is not the Cartesian one.
\end{Rem} 

\begin{Def} \label{Def:pairing}
Suppose now that the spannable pair $(\GG;\JJ)$ is Cartesian (Hypotheses \ref{Hyp:spannable-pair} and~\ref{Hyp:prods}).
An \emph{external pairing} from $(\cat M,\cat N)$ to~$\cat L$ is a pseudonatural transformation $\boxdot\colon \cat M\boxtimes \cat N\to \cat L(- \times -)$ of 2-functors $\GG^\op \times \GG^\op \to \mathbb A$. Together with modifications as morphisms, they form a category $\PsNat(\cat M \boxtimes \cat N , \cat L(-\times -))$. 
\end{Def}

\begin{Rem}
Concretely, an external pairing $\boxdot\colon (\cat M,\cat N)\to \cat L$ consists of a family 
\[
\boxdot = \boxdot_{G_1,G_2}\colon \cat M(G_1) \otimes \cat N(G_2) \longrightarrow \cat L(G_1\times G_2)
\]
of 1-morphisms of $\mathbb A$ for all pairs of objects $G_1,G_2\in \GG$, together with a family $\boxdot= \boxdot_{u_1,u_2}$ of invertible 2-morphisms of~$\mathbb A$ 
\[
\xymatrix{
\cat M(G_1) \otimes \cat N(G_2) \ar[d]_{u_1^* \otimes u_2^*} \ar[rr]^-{\boxdot_{G_1,G_2}} &&
 \cat L(G_1 \times G_2) \ar[d]^-{(u_1 \times u_2)^*}  \\
\cat M(H_1) \otimes \cat N(H_1) \ar[rr]_-{\boxdot_{H_1,H_2}} \ar@{}[urr]|{{}^\simeq \SWcell \; \boxdot_{u_1,u_2}} &&
 \cat L(H_1 \times H_2)
}
\]
for all pairs of 1-morphisms $u_1\colon H_1\to G_1$ and $u_2\colon H_2\to G_2$. 
This data is subject to the usual functoriality and naturality axioms (see \eg \cite[App.\,A]{BalmerDellAmbrogio20}). 
\end{Rem}

\begin{Prop} \label{Prop:pairings}
Let $(\GG;\JJ)$ be a Cartesian pair.
For any triple of 2-functors $\cat M, \cat N, \cat L\colon \GG^\op\to \mathbb A$, there is a canonical adjoint equivalence of categories
\[
\PsNat ( \cat M\otimes \cat N, \cat L) \simeq \PsNat ( \cat M \boxtimes \cat N, \cat L(- \times -)) \,.
\]
\end{Prop}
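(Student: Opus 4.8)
The plan is to realise the asserted equivalence as a decategorification of the universal property of binary products in~$\GG$. Since $(\GG;\JJ)$ is Cartesian, the diagonal 2-functor $\Delta\colon \GG\to \GG\times\GG$ has a right adjoint, the product (pseudo)functor $\times\colon \GG\times\GG\to \GG$, whose unit is the family of diagonals $\delta_G\colon G\to G\times G$ and whose counit at $(G_1,G_2)$ is the pair of product projections $(p_1,p_2)$, $p_i\colon G_1\times G_2\to G_i$. When the products of $\GG$ can be chosen strictly — as for $\gpd$, or for $\gpdG\simeq G\sset$ — this is a strict 2-adjunction; in general it is a biadjunction, which is all that will be used. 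Note that only the product structure on~$\GG$ enters here, not the requirement in \Cref{Hyp:prods} that products of $\JJ$-morphisms lie in~$\JJ$.

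First I would pass to opposite 2-categories: reversing 1-cells (and keeping 2-cells) turns $\Delta\dashv\times$ into a (bi)adjunction $\times\dashv\Delta$ of 2-functors $\GG^\op\times\GG^\op\rightleftarrows\GG^\op$, now with $\times$ the left (bi)adjoint and with unit and counit still carried by the families $(p_1,p_2)$ and~$\delta_G$. Then I would whisker with $\2Fun(-,\bbA)$: precomposition is 2-functorial and sends (bi)adjunctions to (bi)adjunctions, so one gets a (bi)adjunction in which $(-)\circ\Delta\colon\2Fun(\GG^\op\times\GG^\op,\bbA)\to\2Fun(\GG^\op,\bbA)$ is left (bi)adjoint to $(-)\circ\times$ (that is, to $\cat L\mapsto\cat L(-\times-)$). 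Evaluating the resulting comparison of hom-categories at the objects $\cat M\boxtimes\cat N$ and $\cat L$, and using $(\cat M\boxtimes\cat N)\circ\Delta=\cat M\otimes\cat N$ and $\cat L\circ\times=\cat L(-\times-)$, yields precisely
\[
\PsNat(\cat M\otimes\cat N,\cat L)\;\simeq\;\PsNat(\cat M\boxtimes\cat N,\cat L(-\times-))\,;
\]
and since any equivalence coming from a (bi)adjunction can be refined to an adjoint equivalence, the adjectives in the statement cost nothing.

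Unwinding this, in one direction an internal pairing $\phi\colon\cat M\otimes\cat N\to\cat L$ is sent to the external pairing with components $\phi_{G_1\times G_2}\circ(p_1^*\otimes p_2^*)\colon\cat M(G_1)\otimes\cat N(G_2)\to\cat L(G_1\times G_2)$, while an external pairing $\boxdot$ is sent to the internal pairing with components $\delta_G^*\circ\boxdot_{G,G}\colon\cat M(G)\otimes\cat N(G)\to\cat L(G)$. As an alternative to invoking the biadjunction, one may take these two formulas as the definitions of functors $\Phi,\Psi$ between the two categories of pairings and verify directly that $\Psi\Phi\cong\Id$ and $\Phi\Psi\cong\Id$; the requisite natural isomorphisms are assembled from the pseudonaturality 2-cells of $\phi$, resp.\ of $\boxdot$, together with the 2-functoriality of $\cat M,\cat N,\cat L$, and boil down to the identities $p_i\circ\delta_G=\id_G$ and $(p_1\times p_2)\circ\delta_{G_1\times G_2}=\id$, i.e.\ to the universal property of the product.

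I expect the only real friction to be the coherence bookkeeping when the products of $\GG$ are merely bicategorical: then $\times$ is only a pseudofunctor, $\Delta\dashv\times$ only a biadjunction, and the identifications $(\cat M\boxtimes\cat N)\circ\Delta=\cat M\otimes\cat N$ and $\cat L\circ\times=\cat L(-\times-)$ hold only up to coherent equivalence, so one must carry the unit and counit pseudonatural transformations — together with the modifications witnessing their triangle identities — through the whiskering by $\2Fun(-,\bbA)$. This is routine but lengthy. In every concrete case of \Cref{Exas:GGJJ} the products can be taken strictly, the biadjunction becomes a strict 2-adjunction, and the hom-category comparison is an honest isomorphism.
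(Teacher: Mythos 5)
Your proposal is correct and rests on exactly the same key observation as the paper's proof: that $-\times-\colon \GG^\op\times\GG^\op\to\GG^\op$ is left 2-adjoint to the diagonal $\Delta$, with unit and counit carried by the projections and the diagonals $\delta_G$, and that the stated equivalence is transport across this 2-adjunction (the paper phrases it as the 2-categorical mate correspondence, your whiskering with $\2Fun(-,\bbA)$ is just a repackaging of that, and your explicit formulas agree with \Cref{Rem:mate-corr-pairing}). No gaps to report.
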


\begin{Ter} \label{Ter:pairings}
The pseudonatural transformation $\cat M\otimes \cat N\to \cat L$ corresponding to an external pairing $\boxdot\colon (\cat M,\cat N)\to \cat L$ will be called \emph{internal pairing} and will be typically denoted by~$\odot$; and vice-versa. We will simply say \emph{pairing} for either $\odot$ or the corresponding~$\boxdot$, as context will clarify.
\end{Ter}

\begin{proof}
The proof is virtually identical to that of \cite[Thm.\,7.7]{GPS14}. 
As in \emph{loc.\,cit.}, the point is that the product 2-functor $-\times- \colon \GG^\op \times \GG^\op \to \GG^\op$ is left 2-adjoint to the diagonal 2-functor $\Delta\colon \GG^\op \to \GG^\op \times \GG^\op $. 
The unit and counit are given (in $\GG$ rather than $\GG^\op$ for clarity) by the diagonals $\delta_G \colon G\to G\times G$ and the projections $(\pi_1,\pi_2)\colon (G_1\times G_2, G_1\times G_2)\to (G_1, G_2)$, which exist because $-\times -$ is the categorical product in~$\GG$.
The claimed equivalence is between transformations of the form
\[
\vcenter{
\xymatrix{
\GG^\op\times \GG^\op \ar[rr]^-{\cat M \,\boxtimes\, \cat N} \ar[d]_\times && \mathbb A \ar@{=}[d] \ar@{}[dll]|{\SWcell} \\
\GG^\op \ar[rr]_-{\cat L} && \mathbb A
}}
\quad \quad
\textrm{and}
\quad \quad
\vcenter{
\xymatrix{
\GG^\op\times \GG^\op \ar[rr]^-{\cat M\,\boxtimes\, \cat N}  && \mathbb A \ar@{=}[d] \ar@{}[dll]|{\SEcell} \\
\GG^\op \ar[rr]_-{\cat L} \ar[u]^-{\Delta} && \mathbb A
}}
\]
and therefore can be obtained by the 2-categorical mate correspondence resulting from the above-mentioned 2-adjunction.
(See \cite[Prop.\,3.5]{Lauda06} if necessary.) 
\end{proof}

\begin{Rem} \label{Rem:mate-corr-pairing}
For later reference, let us make   more explicit the mate correspondence of \Cref{Prop:pairings}. Given a pairing $\boxdot\colon (\cat M, \cat N)\to \cat L$, the composites
\[
\xymatrix{
\odot_G \colon\quad \cat M(G)\otimes \cat N(G) \ar[r]^-{\boxdot_{G,G}} & \cat L(G\times G) \ar[r]^-{\delta_G^*} & \cat L(G) 
}
\]
(where as above $\delta_G\colon G\to G\times G$ is the diagonal 1-morphism of~$G\in \Obj \GG$) and 
\[
\odot_u \colon \;
\vcenter{
\xymatrix{
\cat M(G) \otimes \cat N(G) \ar[d]_-{u^* \otimes u^*} \ar[r]^-{\boxdot_{G,G}} &
 \cat L(G\times G) \ar[r]^-{\delta_G^*} \ar[d]^{(u\times u)^*} \ar@{}[ld]|{\SWcell\;\boxdot_{u,u}} &
  \cat L(G) \ar[d]^{u^*} \\
\cat M(H) \otimes \cat N(H) \ar[r]_-{\boxdot_{H,H}} & \cat L(H\times H) \ar[r]_-{\delta_H^*} & \cat L(H) 
}
}
\]
define the components of the transformation $\odot \colon \cat M \otimes \cat N\to \cat L$.
Conversely, any pseudonatural transformation $\odot\colon \cat M \otimes \cat N\to \cat L$ can be extended to a pseudonatural $\boxdot \colon \cat M \boxtimes \cat N\to \cat L(-\times -)$, \ie a pairing,
with components given by 
\[
\boxdot_{G_1,G_2} \colon \;
\xymatrix{
\cat M(G_1) \otimes \cat N(G_2) \ar[r]^-{\pi_1^* \otimes \pi_2^*} &
 \cat M(G_1\times G_2) \otimes \cat N(G_1 \times G_2) \ar[r]^-{\odot_{G_1\times G_2}} &
  \cat L(G_1 \times G_2)
}
\]
(where as before $\pi_i \colon G_1 \times G_2\to G_i$ denotes the projection 1-morphism) and
\[
\boxdot_{u_1,u_2} \colon \!\!
\vcenter{
\xymatrix{
\cat M(G_1) \otimes \cat N(G_2) \ar[r]^-{\pi_1^* \otimes \pi_2^*} \ar[d]_{u_1^* \otimes u_2^*} &
 \cat M(G_1\times G_2) \otimes \cat N(G_1 \times G_2) \ar[r]^-{\odot_{G_1\times G_2}} \ar[d]_{(u_1\times u_2)^* \otimes (u_1\times u_2)^*} &
  \cat L(G_1 \times G_2) \ar[d]^{(u_1\times u_2)^*} \ar@{}[dl]|{\SWcell\; \odot_{u_1\times u_2}} \\
\cat M(H_1) \otimes \cat N(H_2) \ar[r]_-{\pi_1^* \otimes \pi_2^*} & 
\cat M(H_1\times H_2) \otimes \cat N(H_1 \times H_2) \ar[r]_-{\odot_{H_1\times H_2}} &
  \cat L(H_1 \times H_2)\,.
}}
\]
The assignments $\boxdot\mapsto \odot$ and $\odot \mapsto \boxdot$ can then be extended to modifications as well in the evident way, in order to define pseudofunctors.
\end{Rem}

\section{The definition of Green 2-functors}
\label{sec:Green}%

We now consider pairings of Mackey 2-functors. This leads us to our definition of  (plain, braided or symmetric) Green 2-functors as well as of left and right modules over them.

Let $(\GG;\JJ)$ be a Cartesian spannable pair as in Hypotheses~\ref{Hyp:spannable-pair}, \ref{Hyp:rectification} and~\ref{Hyp:prods}.
For simplicity we now specialize to $\bbA = \ADDick$, which as before denotes either of the symmetric monoidal 2-categories $\ADD_\kk$ and $\ADDic_\kk$ of \Cref{Prop:tensor-ADD}. 
Equip $\2Fun(\GG^\op,\ADDick)$ with the symmetric monoidal structure of \Cref{Prop:tensor-pre-Green}. We still write $\otimes$ for both the tensor product of $\ADDick$ and of $\2Fun(\GG^\op,\ADDick)$.

\begin{Rem} \label{Rem:adj-tensor-ADD}
For any Mackey 2-functor $\cat M$ and any two morphisms $i_1\colon H_1\to G_1$ and $i_2\colon H_2\to G_2$ in~$\JJ$, we can obtain two adjunctions 
\[
\xymatrix{
\cat M(G_1) \otimes \cat M(G_2) 
 \ar[dd]|{i_1^*\, \otimes \, i_2^*} \\ \\
\cat M(H_1) \otimes \cat M(H_2)
 \ar@/^7ex/[uu]_\dashv^{(i_1)_! \,\otimes\, (i_2)_!}
  \ar@/_7ex/[uu]^\dashv_{(i_1)_* \,\otimes\, (i_2)_*}
}
\]
from the four adjunctions $(i_k)_!\dashv i_k^* \dashv (i_k)_*$ $(k=1,2)$, by decomposing $i_1^* \otimes i_2^* = (i_1^* \otimes \Id)\circ (\Id \otimes i_2^*)$ and noticing that the operation of tensoring in $\bbA$ with a fixed object is 2-functorial and therefore preserves adjunctions. 
\end{Rem}

\begin{Prop} \label{Prop:bimorphism}
Consider a pairing $\boxdot\colon (\cat M,\cat N)\to \cat L$ (\Cref{Def:pairing}) where the three 2-functors $\cat M,\cat N,\cat L\colon \GG^\op\to \ADDick$ are all Mackey 2-functors for the Cartesian pair $(\GG;\JJ)$.
Then the following are equivalent:
\begin{enumerate}[\rm(1)]
\item \label{it:mor}
For all $(i_1\colon H_1\to G_2)$ and $(i_2\in H_2\to G_2)$ in~$\JJ$, the left mate
\[
\vcenter{
\xymatrix{
\cat M(G_1) \otimes \cat N(G_2)
 \ar@{<-}[d]_{(i_1)_! \otimes (i_2)_!}
  \ar[r]^-{\boxdot} & 
 \cat L(G_1 \times G_2)
  \ar@{<-}[d]^{(i_1\times i_2)_!}
   \ar@{}[dl]|{\NWcell \;(\boxdot^{-1})_!} \\
\cat M(H_1) \otimes \cat M(H_2)
 \ar[r]_-{\boxdot}  &
  \cat M(H_1 \times H_2)
}}
\]
and the right mate
\[
\vcenter{
\xymatrix{
\cat M(G_1) \otimes \cat N(G_2)
 \ar@{<-}[d]_{(i_1)_* \otimes (i_2)_*}
  \ar[r]^-{\boxdot} & 
 \cat L(G_1 \times G_2)
  \ar@{<-}[d]^{(i_1\times i_2)_*}
   \ar@{}[dl]|{\SEcell \;\boxdot_*} \\
\cat M(H_1) \otimes \cat M(H_2)
 \ar[r]_-{\boxdot}  &
  \cat M(H_1 \times H_2)
}}
\]
obtained from the 2-cell component 
$\boxdot=\boxdot_{i_1,i_2}\colon (i_1\times i_2)^*\circ \boxdot \overset{\sim}{\Rightarrow} \boxdot \circ (i_1^* \otimes i_2^*)$
 are mutually inverse (\cf \Cref{Rem:adj-tensor-ADD} for the adjoints on the left-hand side; also, those on the right make sense because $\JJ$ is closed under Cartesian products).
\item \label{it:left-mor}
For all $i_1,i_2\in \JJ$, the left mate $(\boxdot_{i_1,i_2}^{-1})_!$ of part~\eqref{it:mor} is invertible.
\item \label{it:right-mor} 
For all $i_1,i_2\in \JJ$, the right mate $(\boxdot_{i_1,i_2})_*$ of part~\eqref{it:mor} is invertible.
\item \label{it:left-special}
As in part \eqref{it:left-mor}, but only for pairs $(i_1,i_2)$ where $i_1$ or $i_2$ is an identity 1-cell.
\item \label{it:right-special}
As in part \eqref{it:right-mor}, but only for pairs $(i_1,i_2)$ where $i_1$ or $i_2$ is an identity 1-cell.
\end{enumerate}
\end{Prop}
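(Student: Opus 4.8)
The plan is to establish the cycle of implications
\[
\eqref{it:mor} \Rightarrow \eqref{it:left-mor} \Rightarrow \eqref{it:left-special}
\quad\text{and}\quad
\eqref{it:mor} \Rightarrow \eqref{it:right-mor} \Rightarrow \eqref{it:right-special},
\]
which are all trivial (a conjunction restricted to fewer instances, or one half of a ``mutually inverse'' pair), and then close the loop by proving
\[
\eqref{it:left-special} \Rightarrow \eqref{it:mor}
\qquad\text{and}\qquad
\eqref{it:right-special} \Rightarrow \eqref{it:mor}.
\]
Since \eqref{it:mor} is symmetric in the roles played by $!$ and $*$ (it just says both mates are defined and one is the inverse of the other), it suffices to treat one of these last two implications, say $\eqref{it:left-special}\Rightarrow\eqref{it:mor}$, and then apply the same argument verbatim with the roles of the two adjunctions swapped — using the \emph{strict Mackey formula}~\eqref{eq:strict-MF} and the \emph{special Frobenius relation}~\eqref{eq:special-Frob} from the rectification hypothesis, which are what make the two mates genuinely inverse to each other once both are invertible. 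So the real content is: (i) a ``two-variable from one-variable'' reduction, and (ii) showing that invertibility of the two one-variable left mates forces the left and right two-variable mates to be mutually inverse.

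For step (i), the key observation is the decomposition already recorded in \Cref{Rem:adj-tensor-ADD}: $i_1^*\otimes i_2^* = (i_1^*\otimes\Id)\circ(\Id\otimes i_2^*)$, and dually $(i_1)_!\otimes (i_2)_! = ((i_1)_!\otimes\Id)\circ(\Id\otimes (i_2)_!)$, with all these factorizations compatible with the adjunction units/counits because tensoring with a fixed object is 2-functorial. Under this decomposition the 2-cell $\boxdot_{i_1,i_2}$ factors (by pseudonaturality/functoriality of $\boxdot$) as a vertical pasting of the two one-variable 2-cells $\boxdot_{i_1,\Id}$ and $\boxdot_{\Id,i_2}$, and correspondingly the left mate $(\boxdot_{i_1,i_2}^{-1})_!$ is the pasting of the two one-variable left mates along a two-step adjunction. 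A standard fact about mates — the ``horizontal/vertical composition of mates is the mate of the composition'' — then shows that $(\boxdot_{i_1,i_2}^{-1})_!$ is invertible as soon as both $(\boxdot_{i_1,\Id}^{-1})_!$ and $(\boxdot_{\Id,i_2}^{-1})_!$ are; and similarly on the $*$-side. This gives $\eqref{it:left-special}\Rightarrow\eqref{it:left-mor}$ (and $\eqref{it:right-special}\Rightarrow\eqref{it:right-mor}$) directly.

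For step (ii), knowing $\eqref{it:left-mor}$ alone I must produce the $*$-side. Here the mechanism is the one sketched in the introduction and governed by rectification: when a functor $i^*$ is equipped with an ambidextrous adjunction $i_!\dashv i^*\dashv i_*$ satisfying~\eqref{eq:strict-MF} and~\eqref{eq:special-Frob}, a 2-cell of the form $\theta\colon i^*\circ(\text{something}) \Rightarrow (\text{something})\circ i^*$ whose left mate is invertible automatically also has an invertible right mate, and the two mates are mutually inverse. Concretely I would express the composite $(\boxdot_{i_1,i_2}^{-1})_! \circ (\boxdot_{i_1,i_2})_*$ (and the other composite) by pasting diagrams and rewrite it, inserting $i_!\cong i_*$ and using the triangle identities together with~\eqref{eq:special-Frob} to cancel units against counits, so that it collapses to an identity 2-cell; this is exactly the computation underlying the ``strict'' projection formula advertised in \Cref{Thm:proj-formulas-Intro}, now carried out one categorical level down for the external pairing. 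The main obstacle is organizing this pasting-diagram bookkeeping cleanly — in particular keeping track of the associativity/unitality coherence 2-cells of the monoidal structure on $\bbA$ and the fact that the factorization $i_1^*\otimes i_2^* = (i_1^*\otimes\Id)\circ(\Id\otimes i_2^*)$ is only coherently associative, not strict — but none of it is conceptually deep once the rectified structure is in hand. I would conclude by remarking that $\eqref{it:right-special}\Rightarrow\eqref{it:mor}$ follows by the symmetric argument, closing the equivalence.
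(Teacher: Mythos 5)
Your step (i) is sound and is in fact the same reduction the paper uses: writing $i_1\times i_2=(i_1\times \Id)\circ(\Id\times i_2)$, factoring $\boxdot_{i_1,i_2}$ accordingly by pseudonaturality, and using compatibility of mates with pasting gives \eqref{it:left-special}$\Rightarrow$\eqref{it:left-mor} and \eqref{it:right-special}$\Rightarrow$\eqref{it:right-mor}, and the remaining implications out of \eqref{it:mor} are indeed trivial.

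The gap is your step (ii), which is where all the content of the proposition sits. You claim that, once the left mates are invertible, the composite of the left and right mates ``collapses to an identity 2-cell'' using only the triangle identities and \eqref{eq:special-Frob}. It does not: writing out that composite for a one-variable component $\theta\colon i^*\circ(-)\Rightarrow(-)\circ i^*$, the two outer unit/counit insertions are separated by a whiskering of $\reta\circ\leps\colon i_!i^*\Rightarrow \Id\Rightarrow i_*i^*$ (or of $\leta\circ\reps$ for the other composite), and no triangle identity reaches across it; by \eqref{eq:special-Frob} the 2-cell $\leta\circ\reps$ is merely an idempotent, not the identity, and under $i_!=i_*$ the 2-cell $\reta\circ\leps$ is a trace/norm-type map (compare \Cref{Prop:dim}), not the canonical identification. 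If the cancellation were formal, the statement ``left mate invertible $\Rightarrow$ right mate is its inverse'' would hold for an arbitrary pseudonatural transformation between arbitrary 2-functors equipped with ambidextrous adjunctions satisfying \eqref{eq:special-Frob}, with no use of the Mackey axioms at all, which is false. The missing input is precisely the one-variable theorem for morphisms of (rectified) Mackey 2-functors, \cite[Prop.\,6.3.1]{BalmerDellAmbrogio20}, whose proof uses the (strict) Mackey formula at genuine Mackey squares and not just the adjunction identities for the single~$i$; and to apply it here you must first check that the one-variable restrictions are transformations between Mackey 2-functors, i.e.\ that $\cat M(-)\otimes\cat N(G_2)$ and $\cat L(-\times G_2)$ (and their analogues in the other variable) are Mackey 2-functors for $(\GG;\JJ)$ -- tensoring with a fixed object preserves direct sums, adjunctions and isomorphisms, while $-\times G_2$ preserves Mackey squares and 1-cells of~$\JJ$. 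This verification, and the appeal to the one-variable result it enables, are absent from your argument, so the loop \eqref{it:left-special}$\Rightarrow$\eqref{it:mor} is not closed. Incidentally, your remark that this collapse is ``the computation underlying the strict projection formula'' has the logic backwards: in the paper the mutual-inverse half of \Cref{Thm:proj-formula} is deduced from the present proposition together with \eqref{eq:strict-MF} applied to the square of \Cref{Lem:Mackey-for-proj}, not the other way around.
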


\begin{proof}
For a fixed $G\in \GG$, the 2-functors $\cat M(-) \otimes \cat N(G)$ and $\cat M(G)\otimes \cat N(-)$ again satisfy the axioms of (rectified) Mackey 2-functors for~$(\GG;\JJ)$, because tensoring in $\ADDick$ with a fixed object preserves direct sums (\Cref{Rem:int-hom-distr}), adjunctions, and isomorphisms.
It is also easy to see that $\cat L(- \times G)$ and $\cat L(G \times -)$ are again Mackey 2-functors (for the Mackey formulas, use that $-\times G$ and $G\times -$ preserve Mackey squares and also, by hypothesis, 1-cells of $\JJ$).
By writing $i_1 \times i_2$ as a composite $(i_1 \times \Id) \circ (\Id \times i_2)$ and by the compatibility of mates with pasting of squares, it follows easily from the above observations that the equivalences between the five properties reduce to the analog of \eqref{it:mor}$\Leftrightarrow$\eqref{it:left-mor}$\Leftrightarrow$\eqref{it:right-mor} for a pseudonatural transformation between two Mackey 2-functors. The latter is proved in \cite[Prop.\,6.3.1]{BalmerDellAmbrogio20}.
\end{proof}

\begin{Def}[Bimorphism] \label{Def:bimorphism}
We will say that a pairing $\boxdot\colon (\cat M,\cat N)\to \cat L$ of Mackey 2-functors is a $\JJ$-\emph{bimorphism}, or just \emph{bimorphism}, or a \emph{morphism of two variables}, if it satisfies the equivalent conditions of \Cref{Prop:bimorphism}.
\end{Def}

\begin{Def}[Green 2-functor]
 \label{Def:2Green}
We define a \emph{Green 2-functor} for $(\GG;\JJ)$ to be a pseudomonoid object $\cat M= (\cat M, \odot^\cat M, \Unit^\cat M)$  (\Cref{Def:pseudo-mon}~-- here we mostly suppress the associators and unitors from notations) in the symmetric monoidal 2-category $\2Fun(\GG^\op,\ADDick)$ (\Cref{Prop:tensor-pre-Green}), satisfying two properties:
\begin{enumerate}[\rm(1)]
\item The underlying 2-functor $\cat M\colon \GG^\op\to \ADDick$ is a Mackey 2-functor for $(\GG;\JJ)$.
\item The external pairing $\boxdot^\cat M \colon \cat M \boxtimes \cat M\to \cat M(- \times -)$ associated with the (internal) multiplication $\odot^\cat M \colon \cat M\otimes \cat M\to \cat M$, as in \Cref{Prop:pairings}, is a $\JJ$-bimorphism in the sense of \Cref{Def:bimorphism}.
\end{enumerate}
Thus in plain English, a Green 2-functor is a Mackey 2-functor which is also a pseudomonoid in 2-functors such that its multiplication preserves inductions in both variables.
We will speak of a \emph{braided} or  \emph{symmetric} Green 2-functor if it is braided or symmetric as a pseudomonoid.
\end{Def}

\begin{Rem} \label{Rem:pointwise-structure}
By \Cref{Prop:Green-lifting} and \Cref{Rem:ADDick-case}, the data of a Green 2-functor amounts to that of a 2-functor $\cat M\colon \GG^\op\to \PsMon(\ADDick)$ with values in monoidal (idempotent complete) additive $\kk$-linear categories, strong monoidal $\kk$-linear functors and monoidal transformations; after forgetting to $\ADDick$, each $i^*=\cat M(i)$ ($i\in \JJ$) admits a left-and-right adjoint $i_!=i_*$ satisfying various nice properties.
Similarly, a braided (or symmetric) Green 2-functor corresponds to a 2-functor with values in braided (symmetric) $\kk$-linear monoidal categories, braided strong monoidal $\kk$-linear functors and braided monoidal transformations.
\end{Rem}

\begin{Rem} \label{Rem:natural-formulation}
In the definition of a Green 2-functor~$\cat M$, the use of the external pairing $\boxdot^\cat M$ arises quite naturally because it is needed for the notion of bimorphism of Mackey 2-functors, which is a clear analog of the (one~variable) morphisms used in \cite{BalmerDellAmbrogio20}. 
It is \emph{a~priori} not obvious how to translate the bimorphism condition in terms of the \emph{internal} pairing, \ie of the lifting to monoidal categories of \Cref{Prop:Green-lifting}. 
In the next section we will see that this translation is precisely (a specific form of) the left and right projection formulas. The resulting reformulation of Green 2-functors (\Cref{Def:quasi-Green}) also make sense for non-Cartesian spannable pairs. 
\end{Rem}

\begin{Exa}  \label{Exa:illustration-tensor-preserves-ind}
Let us illustrate \Cref{Def:2Green} and the bimorphism condition concretely using the ur-example of a Green 2-functor~$\cat M$, namely that of linear representations over a field~$\kk$ (see \Cref{Exa:rep-theory}). 
Take $(\GG;\JJ)=(\gpd; \gpdf)$ and $\bbA= \ADD^{\ic}_\kk$. Then $\cat M(G)$ is the functor category $\kk G\MMod=(\kk \MMod)^G$ and the 2-functoriality $G\mapsto \cat M(G)$ (``restriction'') is precomposition with functors and natural transformations in~$\gpd$. Focussing attention to groups~$G$, for familiarity, this is restriction along homomorphisms and conjugations thereof.  Induction along a subgroup inclusion $i\colon H\hookrightarrow G$ is computed by the familiar Kan extension formulas: $i_!=\kk G\otimes_{\kk H}(-) \cong \Hom_{\kk H}(\kk G, -) = i_*$. 
The internal (symmetric) tensor product $\odot$ on $\cat M(G)$ is the tensor product as $\kk$-vector spaces equipped with diagonal $G$-action; the corresponding external product $M \boxdot M'$ of $M\in \cat M(G)$ and $M'\in \cat M(G')$ is again $M\otimes_\kk M'$, this time equipped with the induced $G\times G'$-action.
As in \Cref{Rem:pointwise-structure}, to get a Green 2-functor we must still check that the external tensor product ``preserves induction in both variables''. 
For the first variable (say), by computing the (say, left) mate as in \Cref{Prop:bimorphism}\,\eqref{it:mor} for $i_1$ the inclusion $H\hookrightarrow G$ of a subgroup and $i_2=\Id_{G'}$ the identity of another finite group~$G'$, this translates as the requirement that the canonical morphism
\[
\kk(G \times G') \otimes_{\kk (H \times G')} (N \boxdot M)
\overset{\sim}{\longrightarrow} 
\big ( \kk G \otimes_{\kk H} N \big ) \boxdot M 
\]
of $\kk(G\times G')$-modules is invertible for all $N\in \cat M(H)$ and $M \in \cat M(G)$ (which it is!). 
\end{Exa}

\begin{Rem} \label{Rem:unit-cond}
\Cref{Def:2Green} imposes no extra condition on the unit $\Unit^\cat M\colon \Unit \to \cat M$ of a Green 2-functor. In particular, for $(i\colon H\to G)\in \JJ$, we emphatically do \emph{not} ask that the right mate $\Unit_G^\cat G \Rightarrow i_*\circ \Unit_H^\cat M$ and left mate  $i_!\circ \Unit_H^\cat M\Rightarrow \Unit_G^\cat M$ of its component $\Unit^\cat M_i\colon i^*\circ \Unit^\cat M_G\overset{\sim}{\Rightarrow} \Unit^\cat M_H$ and its inverse~$(\Unit^\cat M_i)^{-1}$, respectively, be invertible. Identifying the functor $\Unit_G=\Unit_G^\cat M\colon \Unit\to \cat M(G)$ with the object of $\cat M(G)$ that classifies it, as in \Cref{Rem:unit-objects}, this means that we do not ask that the two maps of~$\cat M(G)$
\[ 
\Unit_G \to i_*(\Unit_H)
\quad \textrm{ and } \quad 
i_!(\Unit_H)\to \Unit_G
\]
 be invertible.
Indeed, they are virtually never so in the exemples; \cf \Cref{sec:Frob}.
\end{Rem}

We can similarly define left and right actions of a Green 2-functor:

\begin{Def} \label{Def:actions}
Let $\cat M=(\cat M, \odot^\cat M, \Unit^\cat M)$ be a Green 2-functor. 
A \emph{left module} over $\cat M$ is a Mackey 2-functor $\cat N$ equipped with a left pseudoaction $\odot^\cat N\colon \cat M\otimes \cat N\to \cat N$  in $\2Fun(\GG^\op, \bbA)$ (\Cref{Def:pseudo-mon}) 
which commutes with induction in both variables, \ie such that the associated pairing $\boxdot^\cat N\colon (\cat M,\cat N)\to \cat N$ is a $\JJ$-bimorphism. Right modules are defined similarly. 
\end{Def}

\begin{Rem} \label{Rem:Green-2fun-internal}
In the classical theory of Mackey functors, one can define Green functors as monoids in the tensor category of Mackey functors (see \Cref{Prop:1-Green-equivs}), and similarly for actions. 
The analogous fully internal definition of Green 2-functor should also work, at least when $(\GG;\JJ)=(\gpd;\gpdf)$ or $(\gpdG;\gpdG)$, in which case we know how to construct a manageable bicategory $\Spanhat$ of ``Mackey 2-motives'' (see \cite[Ch.\,6-7]{BalmerDellAmbrogio20}). See \Cref{Rem:misalignment} for more on this.
\end{Rem}

\section{The projection formulas}
\label{sec:proj}%

Let $\cat M, \cat N, \cat L$ be three Mackey 2-functors for $(\GG;\JJ)$ with values in $\bbA = \ADDick$ and suppose we are given a bimorphism $\odot\colon  (\cat M, \cat N)\to \cat L$, as in \Cref{sec:Green}. 
Then in particular, for every $(i\colon H\to G)\in \JJ$ the restriction functor $i^*\colon \cat L(G)\to \cat L(H)$ comes equipped with natural isomorphisms (for $X_1 \in \cat M(G),X_2\in \cat N(G)$)
\[ \odot_i \colon i^*(X_1\odot_G X_2) \overset{\sim}{\longrightarrow}  i^*(X_1)\odot_H i^* (X_2) \] 
as well as two adjunctions $(i_!\dashv i^*, \leta, \leps)$ and $(i^*\dashv i_*, \reta, \reps)$.

\begin{Def}[Projection maps] 
\label{Def:Frobs}
Given this data, the \emph{right projection maps} are the following two composite natural transformations:
\[
\xymatrix{
X \odot_G i_*Y
  \ar[d]_{\reta} \ar@{..>}[r]^-{\rproj^1_{X,Y}} & 
i_*(i^*X \odot_H Y ) \\
i_*i^* (X \odot_G i_*Y )
  \ar[r]^-\sim_-{i_*(\odot_i)} & 
i_*(i^* X \odot_H i^*i_* Y )
  \ar[u]^{i_*(i^*X \odot_H \reps )}
}
\;\;
\xymatrix{
i_*Y \odot_G X
   \ar[d]_{\reta} \ar@{..>}[r]^-{\rproj^2_{Y,X}}  &
i_*(Y \odot_H  i^*X  )  \\
i_*i^* (i_*Y \odot_G  X  ) 
   \ar[r]^-\sim_-{i_*(\odot_i)}  &
i_*(i^*i_* Y \odot_H  i^* X  )
  \ar[u]^{i_*( \reps\, \odot_H i^*X )}
}
\]
The \emph{left projection maps} are defined dually, as follows:
\[
\xymatrix{
X \odot_G i_!Y
  \ar@{<-}[d]_{\leps} \ar@{<..}[r]^-{\lproj^1_{X,Y}} & 
i_!(i^*X \odot_H Y ) \\
i_!i^* (X \odot_G i_!Y )
  \ar@{<-}[r]^-\sim_-{i_!(\odot_i^{-1})} & 
i_!(i^* X \odot_H i^*i_! Y )
  \ar@{<-}[u]^{i_!(i^*X \odot_H \leta )}
}
\;\;
\xymatrix{
i_!Y \odot_G X
   \ar@{<-}[d]_{\leps}
    \ar@{<..}[r]^-{\lproj^2_{Y,X}}  &
i_!(Y \odot_H  i^*X  )  \\
i_!i^* (i_!Y \odot_G  X  ) 
   \ar@{<-}[r]^-\sim_-{i_!(\odot_i^{-1})}  &
i_!(i^*i_! Y \odot_H  i^* X  )
  \ar@{<-}[u]^{i_!( \leta\, \odot_H i^*X )}
}
\]
Note that the variables are $X\in \cat M(G)$ and $Y\in \cat N(H)$ for the two left-hand side diagrams, but $X\in \cat N(G)$ and $ Y\in \cat M(H)$ for the two right-hand side ones. 
\end{Def}

\begin{Rem} 
In our chosen notations $\lproj^1, \lproj^2, \rproj^1, \rproj^2$ for \Cref{Def:Frobs}, ``left'' and ``right'' refer to whether we make use of the left or right adjunction, while ``1'' and ``2'' refer to whether it is the first or second variable which moves past induction $i_*=i_!$. Other conventions are in use in similar situations; \cf \Cref{sec:half-braiding}. 
\end{Rem}

\begin{Rem} 
The definition of left or right projection maps is exceedingly general: it suffices  to have, in a monoidal 2-category, a (strong) morphism of pseudomonoids (\Cref{Def:pseudo-mon}) admitting a left or right adjoint in the underlying plain 2-category.  
\end{Rem}

\begin{Thm}[The projection formulas] 
\label{Thm:proj-formula}
For any bimorphism  $\odot \colon (\cat M , \cat N)\to \cat L$ of Mackey 2-functors $\cat M,\cat N, \cat L$ (\Cref{Def:bimorphism}) and any $(i\colon H\to G)\in \JJ$, the left and right projection maps (\Cref{Def:Frobs}) are mutually inverse
\[
\xymatrix{
 X \odot_G i_*(Y)
 \ar@<.7ex>[r]^-{\rproj^1}
 \ar@{}[r]|-\sim &
i_*(i^*X \odot_H  Y)
  \ar@<.7ex>[l]^-{\lproj^1}
}
\quad\quad
\xymatrix{
i_*(Y) \odot_G X
 \ar@<.7ex>[r]^-{\rproj^2}
 \ar@{}[r]|-\sim &
i_*(Y \odot_H i^* X)
  \ar@<.7ex>[l]^-{\lproj^2}
}
\]
for all $X\in \MM(G)$ and all $Y \in \cat N(H)$, resp.\ all $X\in \cat N(G)$ and all $Y \in \MM(H)$.
\end{Thm}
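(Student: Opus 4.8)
The plan is to prove the first pair of formulas, $\rproj^1$ and $\lproj^1$ being mutually inverse; the second pair is obtained by the symmetric argument, swapping the roles of the two variables (or by appealing to the braiding if $\cat M$ happens to be braided, but we do not need to). First I would reduce to a purely diagrammatic statement about the \emph{monad} and \emph{comonad} associated with the ambijunction $i_!\dashv i^*\dashv i_*$. The key observation is that, because the pairing $\odot$ is a bimorphism, \Cref{Prop:bimorphism}\,\eqref{it:left-special} and \eqref{it:right-special} tell us that the mate $(\odot_i^{-1})_!$ is the inverse of $(\odot_i)_*$ when one of the two arguments is an identity; applying this with $i_1=i$, $i_2=\Id$ gives precisely that the two ``obvious'' comparison 2-cells built from $\odot_i$ using the left, resp.\ right, adjunction agree (up to inversion). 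This is the external incarnation of the statement we want, transported along \Cref{Prop:pairings}/\Cref{Rem:mate-corr-pairing} to the internal pairing $\odot_G$.

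Next I would unwind the definitions of $\rproj^1$ and $\lproj^1$ from \Cref{Def:Frobs} and compose them in both orders. The composite $\lproj^1\circ\rproj^1$ is a natural endomorphism of $X\odot_G i_*(Y)$ assembled from the four (co)unit maps $\leta,\leps,\reta,\reps$, two copies of $\odot_i$ (one inverted), and the functoriality of $i_*=i_!$ applied to maps in the fiber. I would rewrite this composite using: (i) the triangle identities for each of the two adjunctions; (ii) naturality of $\odot_i$ to slide the (co)units past the tensor factors; and (iii) crucially the \emph{special Frobenius relation} \eqref{eq:special-Frob}, $\reps\circ\leta=\id$, together with the strict Mackey formula \eqref{eq:strict-MF}, which together with the bimorphism identity above are exactly the ingredients that collapse the leftover terms. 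After these manipulations the composite should reduce to the identity; the same bookkeeping in the other order gives $\rproj^1\circ\lproj^1=\id$.

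In fact it is cleaner to phrase the whole computation at the level of the Frobenius monad: the bimorphism condition provides a canonical isomorphism of (co)monads $i_*i^*(X\odot_G-)\cong A(i)\odot_G(-)$-style comparison, and under it $\rproj^1$ and $\lproj^1$ become the ``multiplication-then-counit'' and ``unit-then-comultiplication'' maps of an ambijunction applied in a single variable. The fact that these are mutually inverse for a rectified Mackey 2-functor is then a formal consequence of \eqref{eq:strict-MF} and \eqref{eq:special-Frob} — it is the one-object, one-variable shadow of the Rectification package. I would state this as a lemma (``for an ambijunction satisfying the special Frobenius relation, the projection maps built from the two adjunctions are inverse''), prove it by a short string-diagram argument, and then deduce the theorem by applying it pointwise, with the bimorphism hypothesis guaranteeing that $\odot_i$ is exactly the coherence datum that makes $i^*(X\odot_G-)$ into a morphism of (co)monads in the relevant sense.

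The main obstacle I expect is purely organizational rather than conceptual: keeping track of which $\odot_i$ (or $\odot_i^{-1}$) sits where, and verifying that the instance of \Cref{Prop:bimorphism} one needs — the compatibility of the left mate and right mate of $\odot_{i,\Id}$ — really is the hypothesis that licenses replacing one adjunction's coherence cell by the other's. Once that single compatibility is isolated, together with \eqref{eq:strict-MF} and \eqref{eq:special-Frob} the cancellations are forced; the risk is a sign/orientation error in the colax-versus-lax convention flagged in the proof of \Cref{Prop:Green-lifting}, so I would fix orientations once at the outset and carry them consistently. I would relegate the explicit pasting diagrams to the body of the proof and present the argument as: (1) isolate the bimorphism identity; (2) prove the abstract one-variable ambijunction lemma; (3) apply it.
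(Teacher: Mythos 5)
Your plan has a genuine gap at its central step: it never uses the one geometric input that makes the theorem true, namely \Cref{Lem:Mackey-for-proj}. The bimorphism condition is a statement about the \emph{external} pairing $\boxdot$ and the inductions $(i\times\Id)_*$; to turn it into a statement about the internal pairing $\odot_G=\delta_G^*\circ\boxdot_{G,G}$ one must commute $\delta_G^*$ past $(i\times\Id)_*$, i.e.\ one needs the base-change (Mackey) formula for $\cat L$, and its strict form \eqref{eq:strict-MF}, at the square \eqref{eq:Mackey-for-proj} --- and one first needs to know that this square \emph{is} a Mackey square, which is exactly \Cref{Lem:Mackey-for-proj} and exactly the point that fails outside groupoids (\Cref{Rem:mon-der-proj}). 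Your proposal treats this internalization as an automatic transport along \Cref{Prop:pairings}/\Cref{Rem:mate-corr-pairing}, but the mate correspondence there relates the pairings themselves, not their compatibility with the adjoints $i_*$. This is why the paper's proof establishes invertibility of the projection map by pasting \emph{two} invertible mates --- the bimorphism mate of $\boxdot_{i,\Id}$ \emph{and} the base-change mate at \eqref{eq:Mackey-for-proj} --- and then gets mutual inversion from \Cref{Prop:bimorphism} together with \eqref{eq:strict-MF} applied to that same square; no step of your sketch plays the role of the second mate.

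Relatedly, the ``abstract one-variable ambijunction lemma'' you propose is false as stated. Unwinding $\lproj^1\circ\rproj^1$ from \Cref{Def:Frobs} and cancelling with the triangle identities and naturality leaves the factor $i_*(\id\odot_H(\leta\circ\reps))$, and only $\reps\circ\leta$, not $\leta\circ\reps$, is the identity \eqref{eq:special-Frob}; no purely formal manipulation removes it. Concretely, take $\cat M=\cat N=\cat L\colon G\mapsto \kk G\MMod$ with the bilinear pairing $X\odot' Y:=X\otimes_\kk Y$ where $G$ acts only through the first factor: this is a perfectly good pseudonatural pairing of rectified Mackey 2-functors, so all the structure your lemma invokes (ambijunction, \eqref{eq:strict-MF}, \eqref{eq:special-Frob}, coherence cells $\odot'_i$) is present, yet for $H<G$ and $X=Y=\kk$ the map $\rproj^1$ goes from a trivial module of dimension $[G:H]$ to the permutation module $\kk[G/H]$ and cannot be invertible. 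Of course this $\odot'$ is not a bimorphism (consistently with \Cref{Thm:proj-implies-bimorphism}), but it shows that the projection formula is not a formal consequence of the rectification package plus a monoidal coherence cell: the bimorphism hypothesis must be fed in through the external-to-internal step, and making that step work is precisely the missing argument via \Cref{Lem:Mackey-for-proj}. If you repair your sketch by inserting that lemma and the base change of $\cat L$ along \eqref{eq:Mackey-for-proj}, you will essentially recover the paper's proof.
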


\begin{Exa}
Most notably, this holds for the multiplication $(\cat M, \cat M)\to \cat M$ of a Green 2-functor~$\cat M$, or more generally for the action $(\cat M, \cat N)\to \cat N$ or $(\cat N, \cat M)\to \cat N$ of a Green 2-functor~$\cat M$ on a left or right $\cat M$-module~$\cat N$ (Definitions \ref{Def:2Green} and~\ref{Def:actions}).
\end{Exa}

\begin{Rem} \label{Rem:proj-braid}
If $\cat M$ is a braided Green 2-functor, $\cat M(G)$ and $\cat M(H)$ are braided tensor categories with braidings $\beta^G$ and~$\beta^H$, say, and the monoidal structure $\odot^{-1}_i$ of $i^*$ is compatible with them. 
It follows easily that the first and second projection maps are braid-conjugate:
$\lproj^2  \circ i_!(\beta^H) = \beta^G \circ \lproj^1$ and $\rproj^2 \circ \beta^G = i_*(\beta^H) \circ \rproj^1$.
\end{Rem}

In order to prove the theorem, we first need an elementary observation:

\begin{Lem} 
\label{Lem:Mackey-for-proj}
For any 1-cell $u\colon H\to G$ in~$\GG$, the following commutative square
\begin{equation} \label{eq:Mackey-for-proj}
\vcenter{
\xymatrix@!C@R=12pt@C=10pt{
& H \ar[dl]_-{(\Id, u)} \ar[dr]^-u & \\
H \times G \ar[dr]_-{u \times \Id} & & G \ar[dl]^-{\delta_G} \\
& G\times G&
}}
\end{equation}
is a Mackey square of~$\GG$.
\end{Lem}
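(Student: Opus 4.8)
The plan is to recognize the square \eqref{eq:Mackey-for-proj} as (equivalent to) a pseudopullback square in $\GG$, since this is precisely what ``Mackey square'' means in the sense of \Cref{Hyp:spannable-pair}(4), and the class membership $\delta_G\in\JJ$ (needed for the Mackey formula to apply) is automatic because $\JJ$ contains all equivalences and is $2$-full... but actually the cleanest route is to verify the universal property of the pseudopullback directly. Concretely, I would show that $H$, equipped with the legs $(\Id,u)\colon H\to H\times G$ and $u\colon H\to G$ together with the identity $2$-cell filling the square $(u\times\Id)\circ(\Id,u)\cong \delta_G\circ u$ (both composites being literally $(u,u)\colon H\to G\times G$), satisfies the $2$-universal property of the pseudopullback $(H\times G)\times_{G\times G}G$.

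First I would set up the comparison: given any object $K$ with $1$-cells $a\colon K\to H\times G$ and $b\colon K\to G$ and a $2$-cell $\theta\colon (u\times\Id)\circ a\isoTo \delta_G\circ b$, I must produce a $1$-cell $K\to H$ and compatible $2$-cells. Write $a=(a_1,a_2)$ with $a_1\colon K\to H$ and $a_2\colon K\to G$ (using that $H\times G$ is the product); then $(u\times\Id)\circ a=(u\circ a_1, a_2)$ and $\delta_G\circ b=(b,b)$, so $\theta$ decomposes as a pair $(\theta_1\colon u a_1\isoTo b,\ \theta_2\colon a_2\isoTo b)$. The candidate lift is $a_1\colon K\to H$; the required $2$-cell over the leg $u\colon H\to G$ is $\theta_1$, and the required $2$-cell over the leg $(\Id,u)\colon H\to H\times G$ is $(\id_{a_1},\theta_1^{-1}\theta_2)\colon a\isoTo (\Id,u)\circ a_1=(a_1, u a_1)$, noting its second component $a_2\isoTo u a_1$ is $\theta_1^{-1}\circ\theta_2$. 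One checks these are compatible with $\theta$ (a short diagram chase using that $\theta=(\theta_1,\theta_2)$). Uniqueness up to unique iso of the lift follows again from the universal property of the product $H\times G$ together with the fact that $u$ is just a functor of groupoids: any two lifts are canonically identified via their $2$-cells over the $H\times G$-leg.

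In all four cases of \Cref{Exas:GGJJ} the Mackey squares are iso-comma squares of groupoids, and here the iso-comma groupoid $(u\times\Id)\downarrow\delta_G$ has objects $\big((h,g),g',\phi\colon (u(h),g)\isoTo(g',g')\big)$; such a datum amounts to $h\in H$, $g\in G$, and an isomorphism $g\isoTo u(h)$ in $G$ (the two components of $\phi$, with $g'$ determined), so projection to $h$ exhibits an equivalence with $H$, compatibly with the legs. This makes the verification entirely concrete in the examples we care about, and the abstract argument above handles the general spannable pair.

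\textbf{Main obstacle.} The statement is essentially a formal property of products and pullbacks, so there is no deep difficulty; the one point requiring care is \emph{bookkeeping the $2$-cells}: making sure the identity $2$-cell one writes down to ``fill'' \eqref{eq:Mackey-for-proj} is genuinely compatible with the coherence isomorphisms of the two composites $(u\times\Id)\circ(\Id,u)$ and $\delta_G\circ u$ into $G\times G$, and that the induced comparison functor to the pseudopullback is an equivalence and not merely essentially surjective and full. Since everything takes place in a $(2,1)$-category this amounts to checking invertibility of a handful of canonical $2$-cells, which is routine but must be done honestly.
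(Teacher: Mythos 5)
Your argument is correct and is essentially the paper's own: checking the bipullback universal property against every test object $K$, with cones and their $2$-cells decomposed through the product structure, is precisely the representable criterion the paper invokes (it applies $\GG(T,-)$ to reduce to groupoids and then verifies that the comparison $H \overset{\sim}{\to} ((u\times\Id)/\delta_G)$ to the iso-comma is an equivalence, which is the same computation you spell out both abstractly and for the groupoid examples). The only misstep is your abandoned opening aside: $\delta_G$ is not an equivalence, so $\delta_G\in\JJ$ is not "automatic" — but it is also never needed, since when the Mackey formula is later applied to this square $\delta_G$ plays the role of the unrestricted leg $u$ of~\eqref{eq:Mackey-square-in-Hyp}, the legs required to lie in $\JJ$ being $i$ and $i\times\Id$.
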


\begin{proof}
For any $T\in \Obj \GG$, we may apply the product-preserving 2-functor $\GG(T,-)$ to the square~\eqref{eq:Mackey-for-proj} in order to obtain a square of the same form---with the functor $\GG(T,u)\colon \GG(T,H)\to\GG(T,G)$ instead of $u\colon H\to G$ etc.---in the 2-category $\textsf{Gpd}$ of all (non-necessarily finite) groupoids. But  in $\mathsf{Gpd}$, any square of the form \eqref{eq:Mackey-for-proj} for any functor $u\colon H\to G$ is a Mackey square; indeed, it is an easy exercise to verify that the comparison functor $\langle (\Id,u), u, \id \rangle$ to the iso-comma square (in  the notations of \cite[\S2.1]{BalmerDellAmbrogio20}) is an equivalence $H \overset{\sim}{\to} ((u \times\Id)/\delta_G)$.
As $T\in \Obj \GG$ was arbitrary, we can now conclude as in \cite[Prop.\,2.1.11, Rmk.\,2.1.6]{BalmerDellAmbrogio20} that the original square is a Mackey square of~$\GG$.
\end{proof}

\begin{proof}[Proof of \Cref{Thm:proj-formula}]
We only prove the second formula, $\lproj^2=(\rproj^2)^{-1}$, leaving the symmetrical proof of the first one to the reader. 
Note that the two sides of the formula car be rewritten as
$i_*( Y \odot_H i^* X) = i_* \circ \odot_H \circ ( \Id_{\cat M(H)} \otimes i^*) (Y,X) 
$ 
and
$i_* (Y) \odot_G X = \odot_G \circ (i_* \otimes \Id_{\cat N(G)} ) (Y,X) 
$ 
respectively. This lets us abstract away $X$ and $Y$ from all notations, which we will do from now on. 

By \Cref{Def:Frobs} and \Cref{Rem:mate-corr-pairing}, the natural transformation $\rproj^2$ is equal to the following pasting in~$\bbA$ involving the external product~$\boxdot$ associated with~$\odot$:
\[
\xymatrix{
\cat M(H) \otimes \cat N(G)
 \ar@{}[rd]|{\varepsilon \otimes \id \;\SWcell}
 \ar[r]^-{i_*\otimes \Id}
  \ar@/_4ex/[dr]_{\Id \otimes i^*} &
\cat M(G) \otimes \cat N(G) 
\ar@/^6ex/[rr]^-{\odot_G}
 \ar[r]^-{\boxdot}
  \ar[d]|{i^*\otimes i^*}
   \ar@{}[dr]|{\SWcell \, \boxdot_{i,i}} & 
 \cat L(G\times G) 
  \ar[d]|{\;\;(i\times i)^*} 
  \ar[r]^(.5){\delta_G^*} &
   \cat L(G)
       \ar@{}[dr]|{\SWcell  \eta}
         \ar[d]_{i^*}
          \ar@{=}@/^4ex/[dr] & \\
         &
\cat M(H) \otimes \cat N(H)
\ar@/_6ex/[rr]_-{\odot_H}
 \ar[r]_-{\boxdot}
& 
     \cat L(H\times H)
       \ar[r]_(.5){\delta_H^*} &
         \cat L(H)
          \ar[r]_-{i_*}
      &
            \cat L(G)
}
\]
This can be further decomposed as follows in order to separate the two variables, by the functoriality property of the pseudonatural transformation~$\boxdot$:
\[
\xymatrix{
\cat M(H) \otimes \cat N(G)
 \ar@{}[rd]|{\varepsilon \otimes \id \;\SWcell}
 \ar[r]^-{i_*\otimes \Id}
  \ar@{=}@/_4ex/[dr] &
\cat M(G) \otimes \cat N(G) 
 \ar[r]^-{\boxdot}
  \ar[d]|{i^*\otimes \Id}
   \ar@{}[dr]|{\SWcell \, \boxdot_{i,\Id}} & 
 \cat L(G\times G) 
  \ar[d]|{\;\;(i\times \Id)^*} 
  \ar[r]^-{\delta_G^*} &
   \cat L(G)
       \ar@{}[dr]|{\SWcell  \eta}
         \ar[d]_{i^*}
          \ar@{=}@/^4ex/[dr]
           \ar@{}[dl]|{\SWcell \id} & \\
         &
\cat M(H) \otimes \cat N(G)
 \ar[r]
   \ar[d]_{\Id \otimes i^*}
    \ar@{}[dr]|{ \SWcell \, \boxdot_{\Id, i}} & 
     \cat L(H\times G)
      \ar[d]|{(\Id \times i)^*}
       \ar[r]_-{(\Id,i)^*} &
         \cat L(H)
          \ar[r]_-{i_*}
           \ar@{=}[d] &
            \cat L(G)
         \\
&
\cat M(H) \otimes \cat N(H) 
 \ar[r]_-{\boxdot} & 
  \cat L(H \times H)
   \ar[r]_-{\delta_H^*} &
     \cat L(H) & 
}
\]
By the compatibility of mates with horizontal composition of squares (\ie insert a unit-counit relation for $(i\times \Id)^*\dashv (i\times \Id)_*$), this is equal to the following pasting, where the two squares at the top are the right mates of $\boxdot_{i,\Id}$ and $\id$, respectively:
\[
\xymatrix{
\cat M(G) \otimes \cat N(G)  
 \ar[rr]^-{\boxdot_{G,G}}
   \ar@{}[drr]|{{}^\simeq \SEcell} & & 
 \cat L(G\times G) 
  \ar[r]^-{\delta_G^*} 
     \ar@{}[dr]|{{}^\simeq \SEcell}
  & \cat L(G)
         & \\
\cat M(H) \otimes \cat N(G)
 \ar[rr]^-{\boxdot_{H,G}}
  \ar[u]^{i_*\otimes \Id}
   \ar[d]_{\Id \otimes i^*}
    \ar@{}[drr]|{{}^\simeq \SWcell} & &
     \cat L(H\times G)
      \ar[d]_{(\Id \times i)^*}
       \ar[r]_-{(\Id,i)^*}
        \ar[u]^{(i \times \Id)_*} &
         \cat L(H)
          \ar[u]_{i_*}
           \ar@{=}[d] &
         \\
\cat M(H) \otimes \cat N(H) 
 \ar[rr]_-{\boxdot_{H,H}} & &
  \cat L(H \times H)
   \ar[r]_-{\delta_H^*} &
     \cat L(H) &
}
\]
As indicated, both mates at the top are invertible, the top-left one by the compatibility of $\boxdot$ with inductions in the first variable, the top-right one by the Mackey formula for the Mackey 2-functor~$\cat L$ at the Mackey square of \Cref{Lem:Mackey-for-proj} (for $u=i$).
This shows that the second right projection map $\rproj^2$ is invertible, because it decomposes as a pasting of natural isomorphisms.

It remains to verify that the dually defined $\lproj^2$ provides the inverse isomorphism. 
Since $(-)_! = (-)_*$ as pseudofunctors  on~$\JJ$ (\Cref{Rem:rectification}), this is now immediate from \Cref{Prop:bimorphism}\,\eqref{it:mor} applied to the bimorphism~$\boxdot$ together with the strict Mackey formula~\eqref{eq:strict-MF} applied to the square~\eqref{eq:Mackey-for-proj}.
\end{proof}

\begin{Rem} 
It appears that all the various projection formulas (a.k.a.\ push-pull formulas) in geometry, topology and representation theory should arise from a similar argument applied to a suitable 2-functor equipped with a compatible pairing. 
\end{Rem}

\begin{Rem} \label{Rem:mon-der-proj}
Note however that the above proof does \emph{not} yield a general projection formula in the context of monoidal derivators.
The argument there fails because the square \eqref{eq:Mackey-for-proj} is not equivalent to a comma square in $\Cat$ if $H$ and $G$ are not groupoids, even for $u\colon H\to G$ faithful. Hence the base-change formula of derivators need not hold for it.
See \cite[Warning 7.21]{GPS14} for a simple counterexample. 
 \end{Rem}

As it turns out, in the context of Mackey 2-functors the converse of the projection theorem is also true:

\begin{Thm} \label{Thm:proj-implies-bimorphism}
Suppose that $\odot \colon (\cat M , \cat N)\to \cat L$ is any pairing of Mackey 2-functors $\cat M,\cat N, \cat L$ for $(\GG;\JJ)$. If $\odot$ satisfies the projection formulas for every $(i\colon H\to G)\in \JJ$, as in the conclusion of \Cref{Thm:proj-formula}, then it is a $\JJ$-bimorphism. 
Together with \Cref{Thm:proj-formula}, this means the projection formulas are precisely the translation in terms of the internal~$\odot$ of the bimorphism condition for the corresponding external~$\boxdot$.
\end{Thm}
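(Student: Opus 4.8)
The plan is to establish the bimorphism condition in the form of condition~\eqref{it:right-special} of \Cref{Prop:bimorphism}: it is enough to show that for every $(i\colon H\to G)\in\JJ$ and every object~$G'$ of~$\GG$, the two right mates $(\boxdot_{i,\Id_{G'}})_*$ and $(\boxdot_{\Id_{G'},i})_*$ of the pseudonaturality 2-cell of the external pairing~$\boxdot$ associated with~$\odot$ (\Cref{Prop:pairings}) are invertible. The two cases are handled in parallel, with the roles of the first and second variable---and correspondingly those of $\rproj^2$ and $\rproj^1$, and of the Mackey formulas for $\cat M$ and for~$\cat N$---interchanged; so I will only spell out the case of $(\boxdot_{i,\Id_{G'}})_*$.

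The crucial observation is that the projection formula for~$\odot$ should be invoked not at~$i$ itself but at the product 1-morphism $i\times\Id_{G'}\colon H\times G'\to G\times G'$, which again lies in~$\JJ$ since $(\GG;\JJ)$ is Cartesian (Hypotheses~\ref{Hyp:spannable-pair} and~\ref{Hyp:prods}). Recall from \Cref{Rem:mate-corr-pairing} that $\boxdot_{G_1,G_2}=\odot_{G_1\times G_2}\circ(\pi_1^*\otimes\pi_2^*)$, with $\pi_1,\pi_2$ the two projections. Now the square
\[
\vcenter{\xymatrix@R=11pt@C=16pt{
H\times G' \ar[r]^-{\pi_1} \ar[d]_-{i\times\Id_{G'}} & H \ar[d]^-{i} \\
G\times G' \ar[r]_-{\pi_1} & G
}}
\]
is a Mackey square: it exhibits $i\times\Id_{G'}$ as the pseudopullback of~$i$ along the first projection, by the same groupoid computation as in the proof of \Cref{Lem:Mackey-for-proj}. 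Hence the Mackey formula for the Mackey 2-functor~$\cat M$ supplies a canonical isomorphism $\pi_1^*\circ i_*\cong(i\times\Id_{G'})_*\circ\pi_1^*$. Since moreover $\pi_2\circ(i\times\Id_{G'})=\pi_2$ on the nose---so that $(i\times\Id_{G'})^*\circ\pi_2^*\cong\pi_2^*$ by 2-functoriality of~$\cat N$---feeding these into the formula for~$\boxdot$ above shows that the mate
\[
(\boxdot_{i,\Id_{G'}})_*\colon\ \boxdot_{G,G'}\circ(i_*\otimes\Id)\ \Longrightarrow\ (i\times\Id_{G'})_*\circ\boxdot_{H,G'}
\]
is obtained from the second right projection map~$\rproj^2$ for the pairing~$\odot$ at the $\JJ$-morphism $i\times\Id_{G'}$ by whiskering with the invertible comparison 2-cells just named. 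As $\odot$ satisfies the projection formulas by hypothesis, this~$\rproj^2$ is invertible, whence so is $(\boxdot_{i,\Id_{G'}})_*$. The parallel argument---using $\rproj^1$ for~$\odot$ at $\Id_{G'}\times i$ and the Mackey formula for~$\cat N$ at the corresponding pullback square---makes $(\boxdot_{\Id_{G'},i})_*$ invertible, and \Cref{Prop:bimorphism} then yields that $\boxdot$ is a $\JJ$-bimorphism.

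The step I expect to be the main obstacle is the identification asserted in the previous paragraph: one must genuinely verify that, after the comparison isomorphisms coming from the Mackey formula for~$\cat M$ and from 2-functoriality, the mate $(\boxdot_{i,\Id_{G'}})_*$ equals the projection map~$\rproj^2$ for~$\odot$ at $i\times\Id_{G'}$---not merely some other natural transformation between the same pair of functors. This is a ``compatibility of mates with pasting'' computation: concretely, it is the pasting argument in the proof of \Cref{Thm:proj-formula} read in reverse, with $i$ there replaced by $i\times\Id_{G'}$ and the Mackey square of \Cref{Lem:Mackey-for-proj} replaced by the pullback square above. It is routine in outline but delicate in its bookkeeping of the mate orientation conventions (\cf the remark on lax versus colax orientation in the proof of \Cref{Prop:Green-lifting}) and of the various structure 2-cells of~$\odot$ and~$\boxdot$ and the iso-cells of the Mackey squares involved. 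Notably, once this identification is secured the conclusion is immediate, and no conservativity or faithfulness property of the restriction functors is required---which is what lets the converse hold at this level of generality.
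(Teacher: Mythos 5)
Your proposal is correct and takes essentially the same route as the paper: the paper verifies condition~(3) of \Cref{Prop:bimorphism} for a general pair $(i,j)$ via a single pasting diagram whose only nontrivial pieces are precisely your ingredients~-- the projection maps $\rproj^1$, $\rproj^2$ applied at the product morphisms $\Id\times j$ and $i\times\Id$, and the (invertibility part of the) Mackey formula at the squares~\eqref{eq:squares-quasi-Green}, identified with the mate $(\boxdot_{i,j})_*$ by compatibility of mates with pasting. Your reduction to pairs with one identity via condition~(5) of \Cref{Prop:bimorphism} merely reorganizes that same computation into two smaller pastings, since the proof of \Cref{Prop:bimorphism} already rests on the factorization $i\times j=(i\times\Id)\circ(\Id\times j)$.
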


\begin{proof}
To prove that the associated external pairing~$\boxdot$ is a bimorphism, we check it satisfies condition \eqref{it:right-mor} of \Cref{Prop:bimorphism}. 
Thus for $i\colon G'\to G$ and $j\colon H'\to H$ in~$\JJ$, we must show the invertibility of the following right mate:
\[
\xymatrix{
\cat M(G') \otimes \cat N(H')
  \ar[r]^-{\boxdot_{G',H'}}
  \ar[d]_{i_* \otimes j_*}
  \ar@{}[dr]|{(\boxdot_{i,j})_*\;\NEcell \;\;} & 
\cat L(G'\times H') 
  \ar[d]^{(i\times j)_*} \\
\cat M(G) \otimes \cat N(H)
 \ar[r]_-{\boxdot_{G,H}} &
\cat L(G\times H)
}
\]
Similarly to the proof of \Cref{Thm:proj-formula}, it is easy (!) to see that this 2-cell decomposes as in the following pasting diagram (where we omit the symbols $\otimes$ and $\times$ between objects in order to save space):
\[
\xymatrix@!C=16pt@R=34pt{
{ \cat M(G') \cat N( H') }
  \ar[d]_{\Id \,\otimes \, \Id}
  \ar[rrrrr]^-{\pr^*_1 \,\otimes\, \pr_2^*} &&&&&
\cat  M(G' H')  \cat N(G' H')
  \ar[drr]^-{\odot_{G' \times H'}} && \\
\cat M(G')  \cat N(H')
  \ar@{}[drrr]|{\id\otimes \id_*\;\NEcell \;\;}
  \ar[d]_{\Id \, \otimes \, j_*}
  \ar[rrr]^-{\pr^*_1 \,\otimes\, \pr_2^*} &&& 
\cat  M(G' H) \cat  N( G'  H')
  \ar@{}[rrrr]|{\rproj^1\NEcell \;\;}
  \ar[urr]^-{(\Id \times j)^* \,\otimes\, \Id \;\;}
  \ar[dr]^{\Id \,\otimes \, (\Id \times j)_*} &&&& 
\cat  L(G' H')
  \ar[dl]_-{(\Id \times j)_*}
  \ar@/^4ex/[dd]^-{(i \times j)_*} \\
\cat  M(G')  \cat N(H)
   \ar[d]_-{\Id \,\otimes \, \Id}
   \ar[rrr]^-{\pr^*_1 \,\otimes\, \pr_2^*} &&&& 
\cat  M(G' H)  \cat N(G' H)
   \ar[rr]^-{\odot_{G'\times H}}  && 
\cat  L(G' H)
  \ar@{}[r]|(1){\id_*\,\Ecell \;\;}
   \ar[dr]_-{(i \times \Id)_*} & \\
\cat  M(G') \cat N(H)
  \ar@{}[drrr]|{\id_*\otimes\id\;\NEcell \;\;}
   \ar[d]_{i_* \,\otimes \, \Id}
   \ar[rrr]^-{\pr^*_1 \,\otimes\, \pr_2^*}  &&& 
\cat  M(G'H) \cat N(GH)
  \ar@{}[rrrr]^{\rproj^2 \, \Ncell \;\;}
   \ar[ur]^{\Id \, \otimes \, (i \times \Id)^*}
   \ar[drr]^{\;\; \; (i \times \Id )_* \, \otimes \, \Id}
&&&& 
\cat L(GH) \\
{\cat M(G) \cat N(H)}
   \ar[rrrrr]^-{\pr^*_1 \,\otimes\, \pr_2^*}   &&&&& 
\cat M(GH) \cat N(GH)
  \ar[urr]_-{\odot_{G\times H}} &&
}
\]
We claim each of the 2-cells appearing above is invertible. 
Indeed, the two pentagons are the right projection maps $\rproj^1$ (for~$\Id\times j\in \JJ$) and $\rproj^2$ (for~$i\times \Id\in \JJ$), which are invertible by hypothesis.
The two unmarked trapezes on the left are commutative by the strict 2-functoriality of~$(-)^*$. 
The right mate~$\id_*$ in the right triangle is the pseudo\-functorial identification for~$(-)_*$, which is invertible (\cite[A.2.10]{BalmerDellAmbrogio21}).
Finally, the two trapezes marked $\id \otimes \id_*$ and $\id_*\otimes \id$ are invertible by the (strict) Mackey formula for the commutative squares
\begin{equation} \label{eq:squares-quasi-Green}
\vcenter{
\xymatrix@!C=10pt@R=10pt{
& G'\times H' \ar[ld]_-{\pr_2} \ar[rd]^{\Id \times j}  & \\
H' \ar[rd]_-{j} &  & G'\times H \ar[ld]^-{\pr_2} \\
& H &
}}
\quad\quad \textrm{ and } \quad\quad
\vcenter{
\xymatrix@!C=10pt@R=10pt{
& G'\times H \ar[ld]_-{\pr_1} \ar[rd]^{i \times \Id}  & \\
G' \ar[rd]_-{i} &  & G\times H \ar[ld]^-{\pr_1} \\
& G &
}}
\end{equation}
respectively, which can be checked to be Mackey squares in any Cartesian (2,1)-category $\GG$ by reasoning precisely as for \Cref{Lem:Mackey-for-proj}.
This concludes the proof.
\end{proof}

Thanks to \Cref{Thm:proj-implies-bimorphism} we now have an equivalent definition of Green 2-functor which makes sense beyond Cartesian pairs:

\begin{Def} \label{Def:quasi-Green}
Let $(\GG;\JJ)$ be any spannable pair, non necessarily Cartesian~(!).
A \emph{(braided, symmetric) Green 2-functor} for  $(\GG;\JJ)$ is a (braided, symmetric) pseudo\-monoid object $\cat M= (\cat M, \odot^\cat M, \Unit^\cat M)$ in  $\2Fun(\GG^\op,\ADDick)$, whose underlying 2-functor $\cat M\colon \GG^\op\to \ADDick$ is a Mackey 2-functor for $(\GG;\JJ)$ and whose internal tensor products satisfy the projection formulas  for all~$i\in \JJ$.  
That is, more precisely, for all $i\in \JJ$ the four projection maps of \Cref{Def:Frobs} satisfy $\lproj^1 = (\rproj^1)^{-1}$ and $\lproj^2 = (\rproj^2)^{-1}$; or equivalently, $\lproj^1$ and $\lproj^2$ are invertible; or $\rproj^1$ and $\rproj^2$ are.
\end{Def}

\begin{Rem} \label{Rem:extra-axiomatization}
Next, in Sections \ref{sec:Frob}-\ref{sec:monadicity} we will detail some consequences of the projection formula. 
These results concern a Green 2-functor applied to a \emph{fixed} 1-morphism $(i\colon H\to G)\in \JJ$, and the astute reader will notice that everything remains true if we only assume given: a strong monoidal functor $i^*\colon \cat M(G)\to \cat M(H)$ between monoidal categories, together with a functor $i_!=i_*\colon \cat M(H)\to \cat M(G)$ and two adjunctions $(i_!,i^*,\leta,\leps)$ and $(i^*,i_*,\reta, \reps)$, such that $\lproj^1=(\rproj^1)^{-1}$ and $\lproj^2 =(\rproj^2)^{-1}$ (with the four projection maps defined as in \Cref{Def:Frobs}) and such that the special Frobenius relation $\reps \circ \leta = \id$ holds as in~\eqref{eq:special-Frob}.
\end{Rem}

\begin{Rem} \label{Rem:Beren}
Working unbeknownst to us and under very different hypotheses (even as formulated in \Cref{Rem:extra-axiomatization}), Sanders \cite{Sanders21pp} has obtained similar results; in particular, as in our \Cref{Thm:Frobenius} he produced a special Frobenius structure on $i_*(\Unit)$  from any suitable monoidal adjunction~$i^*\dashv i_*$.
Amusingly, although almost orthogonal, our two theorems seems to cover many of the same examples.
\end{Rem}

\section{Induced Frobenius algebras}
\label{sec:Frob}%

The goal of this section is to show that for any Green 2-functor $\cat M$ and every $i\colon H\to G$ in~$\JJ$, the induced object $A(i):= i_!(\Unit_H)=i_*(\Unit_H)$ of the monoidal category $\cat M(G)$ inherits a special Frobenius structure.
As we will see in \Cref{sec:monadicity}, this improves and conceptually clarifies the results of \cite{BalmerDellAmbrogioSanders15}.

\begin{Rec}[Frobenius structures] \label{Rec:Frob}
In any monoidal category, a \emph{Frobenius structure}, also called \emph{Frobenius monoid} or \emph{Frobenius algebra}\footnote{The traditional name notwithstanding, the notion of a \emph{Frobenius algebra/monoid} is self-dual. To underline this, we will occasionally prefer the term \emph{Frobenius structure} or \emph{Frobenius object}.},
is an object carrying both a monoid structure and a comonoid one, in such a way that the comultiplication and counit are morphisms of left and right modules~-- or equivalently, such that multiplication and unit are morphisms of left and right comodules. The condition on the unit/counit is actually redundant; indeed, Frobenius structures admit numerous equivalent axiomatizations. 
In a braided monoidal category, a Frobenius structure is \emph{commutative} if it is commutative as a monoid (or equivalently, if it is cocommutative as a comonoid), and is \emph{special} if the comultiplication is a section for the multiplication. See \eg \cite{Street04} \cite{Kock04} \cite[\S5]{HeunenVicary19}.
\end{Rec}

\begin{Rec}[Frobenius monads] \label{Exa:Frob-monad}
Recall that a \emph{monad} on a category $\cat C$ is just a monoid in the endofunctor category $\End(\cat C)$, the monoidal structure of the latter being composition of functors. 
A \emph{comonad} is a monad in $\End(\cat C)^\op$, \ie an endofunctor of $\cat C$ equipped with a coassociative and counital comultiplication. 
A \emph{Frobenius monad} 
is a Frobenius structure in $\End(\cat C)$. Unwinding definitions, it consists of a monad $(\bbM, m, u)$ together with a comonad $(\bbM,d,e)$ on the same functor~$\bbM\colon \cat C\to \cat C$, with the property that the following (self-dual) diagram commutes:
\begin{equation} \label{eq:Frob-monad}
\vcenter{
\xymatrix{
& \bbM^2 \ar[ld]_{d\, \bbM} \ar[d]_{m} \ar[dr]^{\bbM\, d} & \\
\bbM^3 \ar[dr]_{\bbM\, m} & \bbM \ar[d]_d & \bbM^3 \ar[dl]^{m \, \bbM} \\
& \bbM^2 & 
}}
\end{equation}
Next we recall some well-known general facts on (co)monads and (co)monoids.
\end{Rec}

\begin{Prop} \label{Prop:Frob-co-monad}
Consider two consecutive adjunctions:
\[
\xymatrix{
\cat C
 \ar[d]|{i^*} \\
\cat D
 \ar@/^3ex/[u]^{i_!}_{\dashv}
 \ar@/_3ex/[u]_{i_*}^{\dashv}
}
\]
As before, we write $\reta$ and~$\reps$ for the unit and counit of the adjunction $i^*\dashv i_*$, and $\leta$ and~$\leps$ for those of $i_!\dashv i^*$.
Then:
\begin{enumerate}[\rm(1)]
\item \label{it:adj-monad} 
The adjunction $i^*\dashv i_*$ induces on~$\cat C$ a monad $\mathbb M= (i_*i^*, i_* \reps i^*, \reta)$.
\item \label{it:adj-comonad}
The adjunction $i_!\dashv i^*$ induces on~$\cat C$ a comonad $\mathbb C= (i_! i^*, i_! \leta  i^*, \leps)$.
\item \label{it:adj-Frob}
If $i_*$ and $i_!$ are the same functor, \eqref{it:adj-monad} and \eqref{it:adj-comonad} combine to define a Frobenius monad on $\cat C$ with underlying functor $i_*i^* = i_!i^*$.
\end{enumerate}

\end{Prop}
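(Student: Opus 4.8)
My plan is to dispatch the first two items as routine recollections and concentrate the argument on the third.

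For \eqref{it:adj-monad} I would simply invoke the standard construction of the monad associated with an adjunction: any adjunction $L\dashv R$ with $L\colon\cat X\to\cat Y$, $R\colon\cat Y\to\cat X$, unit $\eta$ and counit $\epsilon$, produces a monad $(RL,\,R\epsilon L,\,\eta)$ on $\cat X$. Instantiating this with $L=i^*$, $R=i_*$ and $\eta=\reta$, $\epsilon=\reps$ gives verbatim the monad $\mathbb M=(i_*i^*,\,i_*\reps i^*,\,\reta)$ on~$\cat C$. Dually, the comonad on $\cat Y$ attached to $L\dashv R$ is $(LR,\,L\eta R,\,\epsilon)$; taking $L=i_!$, $R=i^*$ and $\eta=\leta$, $\epsilon=\leps$ yields the comonad $\mathbb C=(i_!i^*,\,i_!\leta i^*,\,\leps)$ on~$\cat C$. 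So \eqref{it:adj-monad} and \eqref{it:adj-comonad} require nothing beyond matching notation (this is standard category theory).

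For \eqref{it:adj-Frob}, set $j:=i_!=i_*$, so that the monad of \eqref{it:adj-monad} and the comonad of \eqref{it:adj-comonad} now share the single underlying endofunctor $\mathbb M=ji^*$ of~$\cat C$, with structure maps $m=j\reps i^*$, $u=\reta$, $d=j\leta i^*$, $e=\leps$. The monad and comonad axioms are already supplied by \eqref{it:adj-monad}--\eqref{it:adj-comonad}, so the only thing left to check is the Frobenius compatibility, namely the commutativity of diagram~\eqref{eq:Frob-monad}; equivalently, that its three legs $\mathbb M^2\to\mathbb M^2$ agree. I would first observe that each of these legs is obtained by whiskering a natural endotransformation of the functor $i^*j\colon\cat D\to\cat D$ by $j$ on the left and by $i^*$ on the right. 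Indeed the middle leg $d\circ m$ contracts the central copy of $i^*j$ in $\mathbb M^2=j\,(i^*j)\,i^*$ via $\reps$ and then reinserts one via $\leta$, so it is $\leta\circ\reps\colon i^*j\Rightarrow i^*j$, so whiskered; the leg $(\mathbb M m)\circ(d\,\mathbb M)$ first inserts a copy of $i^*j$ via $\leta$ and then contracts one via $\reps$, so its middle transformation is $(i^*j\,\reps)\circ(\leta\,i^*j)$; and likewise $(m\,\mathbb M)\circ(\mathbb M\,d)$ gives $(\reps\,i^*j)\circ(i^*j\,\leta)$. The first of these equals $\leta\circ\reps$ by naturality of $\leta\colon\Id_{\cat D}\Rightarrow i^*j$ applied to the component morphisms of~$\reps$, and the second equals $\leta\circ\reps$ by naturality of $\reps\colon i^*j\Rightarrow\Id_{\cat D}$ applied to the component morphisms of~$\leta$. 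Hence all three legs coincide, \eqref{eq:Frob-monad} commutes, and $\mathbb M$ is a Frobenius monad.

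I expect the only friction to be purely bookkeeping: carefully unwinding the three whiskerings in \eqref{eq:Frob-monad} so that the comparison collapses onto the two naturality squares above — drawing the pasting diagrams makes this transparent — and noting that no triangle identity is actually needed for this step, the triangle identities entering only through the (co)monad axioms of \eqref{it:adj-monad}--\eqref{it:adj-comonad}. It is also worth flagging, when writing this up, that the hypothesis ``$i_!=i_*$ as functors'' (rather than merely up to isomorphism) is exactly what lets the monad and comonad structures live on one and the same $\mathbb M$, and that — in contrast to the rectified setting of \Cref{Rem:rectification} — the special Frobenius relation $\reps\circ\leta=\id$ plays no role here: only the opposite composite $\leta\circ\reps$ enters.
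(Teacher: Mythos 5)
Your proposal is correct and follows essentially the same route as the paper: parts (1)--(2) are quoted as the standard (co)monad of an adjunction, and for (3) you reduce each leg of diagram~\eqref{eq:Frob-monad} to a whiskering $i_*(-)i^*$ of an endotransformation of $i^*i_*$ and show all three middles equal $\leta\circ\reps$, using naturality of $\leta$ for one side and of $\reps$ for the other --- exactly the two half-squares the paper whiskers by $i_*\circ-$ and $-\circ i^*$. Your added remark that the special Frobenius relation $\reps\circ\leta=\id$ is not needed here is also consistent with the paper's argument.
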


\begin{proof}
Parts \eqref{it:adj-monad} and \eqref{it:adj-comonad} are standard and easily verified, and so is \eqref{it:adj-Frob}, although the latter is sometimes obscured by the choice of axiomatization. For completeness, let us note that the following two half-squares
\[
\xymatrix{
& i^*i_* \ar[dl]_{\leta\,i^*i_*} \ar[d]_(.45){\reps} \ar[dr]^{i^*i_* \,\leta} & \\
i^*i_*i^*i_* \ar[dr]_{i^*i_* \,\reps} & { \Id} \ar[d]_(.45){\leta}   & i^*i_*i^*i_* \ar[dl]^{\reps\,i^*i_*} \\
& i^*i_* &
}
\]
commute by the naturality of~$\leta$ and $\reps$, respectively, and that the Frobenius property \eqref{eq:Frob-monad} follows by applying $i_*\circ - $ and~$- \circ i^*$ to this diagram.
\end{proof}

\begin{Prop} \label{Prop:induced-co-monoid}
Let $i^*\colon \cat C\to \cat D$ be any strong (braided) monoidal functor between any two (braided) monoidal categories. Then:
\begin{enumerate}[\rm(1)]
\item \label{it:lax}
A right adjoint $i_*\colon \cat D\to \cat C$ of $i^*$ inherits the structure of a lax monoidal functor from the strong monoidal structure of the latter.
If $i^*$ is braided then so is~$i_*$.

\item \label{it:colax}
Dually, a left adjoint $i_!\colon \cat D\to \cat C$ of $i^*$ inherits the structure of a colax monoidal functor from the strong monoidal structure of the latter,
braided if $i^*$ is.

\item \label{it:co-monoid}
It follows that a right adjoint $i_*$ sends (commutative) monoids in $\cat D$ to (commutative) monoids in~$\cat C$, and a left adjoint $i_!$ preserves (cocommutative) comonoids.

\item \label{it:co-monoid-unit}
In particular, since the unit $\Unit_\cat D$ is always both a (commutative) monoid and a (cocommutative) comonoid in a unique way, $i_*(\Unit_\cat D)$ is canonically a (commutative) monoid and $i_!(\Unit_\cat D)$ a (cocommutative) comonoid.
Explicitly, the multiplication~$\mu$ and comultiplication~$\delta$ are respectively given by
\[
\vcenter{
\xymatrix@C=14pt@R=14pt{
& i_*(\Unit) \otimes i_*(\Unit) 
  \ar `l[dl] `[dddd]_\mu^{:=}   [dddd]
   \ar `r[r] `[ddd]^(.3){\mathrm{lax}} [ddd]
   \ar[d]^{\eta} &
   \\
& i_*i^*(i_*\Unit \otimes i_*\Unit) \ar[d]_{\simeq}^{i_*(\mathrm{strong}^{-1})} \\
& i_*(i^*i_*\Unit \otimes i^*i_*\Unit) \ar[d]^{i_*(\varepsilon \,\otimes\, \varepsilon)} \\
& i_*(\Unit \otimes \Unit) \ar[d]^{\simeq} \\
& i_*(\Unit)
}}
\quad \textrm{ and } \;
\vcenter{
\xymatrix@C=14pt@R=14pt{
&
i_! (\Unit) \ar[d]_{\simeq}
 \ar `r[r] `[dddd]^{\delta}_{=:} [dddd]
 & & \\
 &
i_! (\Unit \otimes \Unit)
 \ar[d]_{i_!(\eta \,\otimes\, \eta)}
    \ar `l[dl] `[ddd]_(.7){\mathrm{colax}\,}   [ddd]
 & \\
&
i_! (i^*i_! \Unit \otimes i^*i_! \Unit) \ar[d]^{\simeq}_{i_!(\mathrm{strong})} & \\
&
i_! i^*(i_! \Unit \otimes i_! \Unit) \ar[d]_{\varepsilon} & \\
&
i_! (\Unit) \otimes i_! (\Unit) &
}}
\]
and the unit and counit by 
\[
\iota\colon
\xymatrix{
\Unit_\cat C \ar[r]^-\eta & i_*i^*(\Unit_\cat C) \cong i_*(\Unit_\cat D)
}
\quad \textrm{ and }\quad
\epsilon\colon
\xymatrix{
i_!(\Unit_\cat D) \cong i_!i^*(\Unit_\cat C) \ar[r]^-\varepsilon & \Unit_\cat C
}
\]
(\cf \Cref{Rem:unit-cond}).
\end{enumerate}
\end{Prop}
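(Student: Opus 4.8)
The plan is to treat \Cref{Prop:induced-co-monoid} as a textbook instance of the mate calculus (``doctrinal adjunction''), the same yoga already pervading the paper. First I would produce the lax structure on~$i_*$ claimed in part~\eqref{it:lax}: fixing the adjunction $i^*\dashv i_*$ with unit $\reta$ and counit $\reps$, I would declare the binary constraint $i_*X\otimes i_*Y\to i_*(X\otimes Y)$ (for $X,Y\in\cat D$) to be the mate under $i^*\dashv i_*$ of the composite
\[
i^*(i_*X\otimes i_*Y)\;\isotoo\;i^*i_*X\otimes i^*i_*Y\;\xra{\reps\otimes\reps}\;X\otimes Y ,
\]
whose first arrow is the inverse of the strong monoidal constraint of~$i^*$, and the nullary constraint $\Unit_\cat C\to i_*(\Unit_\cat D)$ to be $\reta$ followed by the canonical isomorphism $i_*i^*(\Unit_\cat C)\isotoo i_*(\Unit_\cat D)$. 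Naturality of $\reps$ and of the constraints of~$i^*$ makes these natural in $X,Y$. Part~\eqref{it:colax} I would then obtain by formal duality, using instead $i_!\dashv i^*$ with unit $\leta$ and counit $\leps$, the strong constraint of~$i^*$ \emph{directly} rather than its inverse, and the opposite mate; the outcome is a colax structure on~$i_!$, with constraints $i_!(X\otimes Y)\to i_!X\otimes i_!Y$ and $i_!(\Unit_\cat D)\to\Unit_\cat C$.

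The heart of the proof is the coherence check for $(i_*,\text{binary},\text{nullary})$ --- the associativity hexagon, the two unit triangles, and, when $i^*$ is braided, the hexagon tying the binary constraint to the braidings of $\cat C$ and~$\cat D$. Here I would use that the mate correspondence is compatible with pasting of squares, together with the triangle identities $\reps i^*\circ i^*\reta=\id$ and $i_*\reps\circ\reta i_*=\id$: each coherence diagram for~$i_*$ then becomes the mate of the corresponding diagram for the strong monoidal functor~$i^*$, combined with MacLane coherence in $\cat C$ and~$\cat D$. Because $i^*$ is \emph{strong}, so that its constraints are invertible, this bookkeeping produces no residual terms. The same argument read dually gives coherence of the colax structure on~$i_!$, and taking mates of the braiding axiom of~$i^*$ upgrades $i_*$ to braided lax and~$i_!$ to braided colax. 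This settles parts \eqref{it:lax} and~\eqref{it:colax}.

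Part~\eqref{it:co-monoid} I would then read off from the standard fact that a lax monoidal functor carries monoids to monoids: given a monoid $(A,\mu_A,\eta_A)$ in~$\cat D$, equip $i_*A$ with the multiplication $i_*A\otimes i_*A\to i_*(A\otimes A)\xra{i_*\mu_A}i_*A$ and the unit $\Unit_\cat C\to i_*(\Unit_\cat D)\xra{i_*\eta_A}i_*A$, and deduce associativity and unitality from the coherence of~$i_*$ just established; if $i^*$ (hence $i_*$) is braided and $A$ is commutative, then $i_*A$ is commutative by the braided hexagon. Dually, $i_!$ carries a (cocommutative) comonoid to a (cocommutative) comonoid. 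Finally, part~\eqref{it:co-monoid-unit} is the special case $A=\Unit_\cat D$, which carries a unique (commutative) monoid and (cocommutative) comonoid structure, its (co)multiplication being a unitor isomorphism and its unit and counit the identity by MacLane coherence; since the constraints of $i_*$ and $i_!$ entering this case are isomorphisms, unwinding the definitions of the binary and nullary constraints above yields exactly the displayed composites for $\mu$, $\delta$, $\iota$ and~$\epsilon$.

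The step I expect to be the main obstacle is the coherence bookkeeping of the second paragraph; but I expect it to be entirely routine precisely because $i^*$ is strong. Indeed, invertibility of the monoidal constraints of~$i^*$ turns each axiom to be verified for $i_*$ (resp.\ $i_!$) into the mate of an axiom already known for~$i^*$, so the whole verification reduces to the compatibility of the mate correspondence with pasting, with no genuine calculation left to do.
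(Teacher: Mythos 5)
Your proposal is correct and follows essentially the same route as the paper: the lax/colax constraints you define as mates under $i^*\dashv i_*$ and $i_!\dashv i^*$ unwind to exactly the composites displayed in the statement, and the paper likewise treats the coherence checks as standard doctrinal-adjunction bookkeeping (omitted as "standard tensorial lore"). Parts (3) and (4) are handled identically in both arguments.
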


\begin{proof}
As this is all standard tensorial lore, we just recall the constructions for the reader's convenience and omit the straightforward verifications.

For~\eqref{it:lax}, the lax structure on $i_*$ consists of the multiplication (for $X,Y\in \cat D $)
\[
\xymatrix@C=20pt{
i_*(X) \otimes_\cat C i_*(Y) \ar[r]^-{\eta} &
 i_*i^*(i_*(X) \otimes_\cat C i_*(Y)) \cong i_*(i^*i_*X \otimes_\cat C i^*i_*Y) \ar[r]^-{i_*(\varepsilon \,\otimes\, \varepsilon)} &
   i_*(X \otimes_\cat D Y)
}
\]
with unit map
$
\xymatrix@1{
\Unit_\cat C \ar[r]^-{\eta} &
 i_*i^* (\Unit_\cat C) \cong i_* (\Unit_\cat D),}
 $ 
both obtained from the strong monoidal structure of~$i^*$ and the adjunction $i^*\dashv i_*$.
The colax structure \eqref{it:colax} on $i_!$ is obtained dually from the adjunction $i_!\dashv i^*$.

For \eqref{it:co-monoid}, just combine the lax structure of~\eqref{it:lax} with any given multiplication $\mu$ and unit $\iota$ on an object $X=Y\in \cat D$, as follows:
\[
\xymatrix@C=16pt{
i_*(X) \otimes_\cat C i_*(X) \ar[r]^-{\textrm{lax}} &
 i_*(X \otimes_\cat D X) \ar[r]^-{i_*(\mu)} &
  i_*(X)
}
,\quad
\xymatrix@C16pt{
\Unit_\cat C \ar[r]^-{\textrm{lax}} &
 i_*(\Unit_\cat D) \ar[r]^-{i_* (\iota)} &
  i_*(X)
}.
\]
These are an associative multiplication on~$i_*(X)$, commutative if $\mu $ is commutative, and its unit map. 
Dually with $i_!$ and comonoids.

For \eqref{it:co-monoid-unit}, the coherent isomorphism $\Unit_\cat D\otimes_\cat D\Unit_\cat D\cong \Unit_\cat D$ and its inverse provide the unique (co)\-multi\-pli\-ca\-tion on~$\Unit_\cat D$ with (co)unit map $\id_{\Unit_\cat D}$. This is always (co)\-commut\-ative as soon as $\cat D$ is braided.
\end{proof}
\begin{center}$*\;*\;*$\end{center}

From now on, let $\cat M$ be a Green 2-functor and denote by~$\otimes=\otimes^\cat M$ its tensor structure. Let $i\colon H\to G$ be in~$\JJ$.
As $i^*$ is a strong monoidal functor (\Cref{Rem:pointwise-structure}), we can apply \Cref{Prop:induced-co-monoid}  to it. 
The two next lemmas show the relationship between the resulting structures and the projection isomorphisms of \Cref{Thm:proj-formula}.

\begin{Lem} \label{Lem:proj-lax-colax}
The lax and colax structures on $i_*=i_!$ are related to the right and left projection isomorphisms (\Cref{Def:Frobs}) by the following commutative triangles:
\begin{align*}
\vcenter{
\xymatrix@C=-14pt{
& i_!(i^*X \otimes Y) \ar[ld]_{\mathrm{colax}\;\;} \ar[dr]^-{\lproj^1}_\simeq & \\
i_!i^* X \otimes i_!Y \ar[rr]_-{\leps \,\otimes \, \id} & & X \otimes i_!Y
}}
\quad\quad
\vcenter{
\xymatrix@C=-14pt{
X \otimes i_*Y \ar[rr]^-{\reta \,\otimes\, \id} \ar[dr]_-{\rproj^1}^\simeq & & i_*i^* X \otimes i_*Y \ar[dl]^{\mathrm{lax}} \\
& i_*(i^* X \otimes Y)&
}}
\\
\vcenter{
\xymatrix@C=-14pt{
& i_!(Y \otimes i^* X) \ar[dl]_-{\lproj^2}^\simeq \ar[dr]^{\mathrm{colax}} & \\
i_!Y \otimes X & & i_! Y \otimes i_!i^* X \ar[ll]^-{\id \,\otimes\, \leps}
}}
\quad\quad
\vcenter{
\xymatrix@C=-14pt{
{i_*Y} \otimes i_*i^* X \ar[dr]_{\mathrm{lax}}& & i_*Y \otimes X \ar[dl]_{\simeq}^-{\rproj^2} \ar[ll]_-{\id\,\otimes \,\reta} \\
& i_*(Y \otimes i^*X) & 
}}
\end{align*}
\end{Lem}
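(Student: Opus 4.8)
The plan is to unwind both sides of each of the four triangles directly from \Cref{Def:Frobs} and \Cref{Prop:induced-co-monoid}, and to reconcile them using nothing beyond the naturality of the units and counits, the naturality of the strong monoidal constraint~$\odot_i$ of~$i^*$, and the triangle identities of the two adjunctions $i_!\dashv i^*\dashv i_*$. There is no conceptual content; the four triangles fall into two formally dual families -- the two involving $i_*$, its lax structure, and the right projection maps~$\rproj^k$, and the two involving $i_!$, its colax structure, and the left projection maps~$\lproj^k$ -- and inside each family the ``first variable'' and ``second variable'' versions become identical after exchanging the two tensor factors (or, when $\cat M$ is braided, after conjugating by the braiding as in \Cref{Rem:proj-braid}). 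So it suffices to carry out one case in full, say the triangle relating $\rproj^1$ to the lax structure of~$i_*$.

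For that case: unwinding \Cref{Def:Frobs} expresses $\rproj^1_{X,Y}$ as the composite $i_*(\id_{i^*X}\otimes\reps_Y)\circ i_*(\odot_i)\circ\reta_{X\otimes i_*Y}$, while unwinding the lax structure of~$i_*$ from \Cref{Prop:induced-co-monoid}\,\eqref{it:lax} at the pair $(i^*X,Y)$ and precomposing with $\reta_X\otimes\id_{i_*Y}$ gives $i_*(\reps_{i^*X}\otimes\reps_Y)\circ i_*(\odot_i)\circ\reta_{i_*i^*X\otimes i_*Y}\circ(\reta_X\otimes\id_{i_*Y})$. The steps I would take, in order, are: first, apply naturality of~$\reta$ to the morphism $\reta_X\otimes\id_{i_*Y}$ to rewrite $\reta_{i_*i^*X\otimes i_*Y}\circ(\reta_X\otimes\id)$ as $i_*i^*(\reta_X\otimes\id)\circ\reta_{X\otimes i_*Y}$, so that both composites start with $\reta_{X\otimes i_*Y}$; second, apply naturality of the constraint~$\odot_i$ to the same morphism $\reta_X\otimes\id_{i_*Y}$ to slide $i_*i^*(\reta_X\otimes\id)$ past $i_*(\odot_i)$, turning the remaining composite into $i_*(\reps_{i^*X}\otimes\reps_Y)\circ i_*(i^*\reta_X\otimes\id_{i^*i_*Y})\circ i_*(\odot_i)$; third, use the triangle identity $\reps_{i^*X}\circ i^*(\reta_X)=\id_{i^*X}$ to collapse $i_*(\reps_{i^*X}\otimes\reps_Y)\circ i_*(i^*\reta_X\otimes\id)$ into $i_*(\id_{i^*X}\otimes\reps_Y)$. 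After these three rewrites the second composite has become $i_*(\id_{i^*X}\otimes\reps_Y)\circ i_*(\odot_i)\circ\reta_{X\otimes i_*Y}=\rproj^1_{X,Y}$, which settles this case.

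The triangle for $\rproj^2$ is then obtained by the identical argument with the two tensor slots exchanged. The two triangles for $\lproj^1$ and $\lproj^2$ follow by the formally dual computation: replace the adjunction $i^*\dashv i_*$ by $i_!\dashv i^*$, the pair $(\reta,\reps)$ by $(\leta,\leps)$, the lax structure of~$i_*$ by the colax structure of~$i_!$ from \Cref{Prop:induced-co-monoid}\,\eqref{it:colax}, reverse every composite, and use the dual triangle identity $i^*(\leps_X)\circ\leta_{i^*X}=\id_{i^*X}$ together with the naturality of~$\leps$ and of~$\odot_i^{-1}$.

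The expected difficulty is purely one of bookkeeping rather than of ideas. The points that need care are keeping all $2$-cells oriented consistently -- recall that the paper orients the components of pseudonatural transformations in the colax direction but strong morphisms of pseudomonoids in the lax direction, as in the proof of \Cref{Prop:Green-lifting} -- and not conflating the lax structure carried by~$i_*$ with the colax structure carried by~$i_!$, even though $i_!=i_*$ as a plain functor by~\Cref{Rem:rectification}. Once the conventions are fixed, each of the three rewrites above is a single instance of naturality or of a triangle identity, so no genuine calculation remains.
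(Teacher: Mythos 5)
Your proposal is correct and follows essentially the same route as the paper: one triangle is verified explicitly using only naturality of the unit/counit, naturality of the monoidal constraint $\odot_i$, and the triangle identity $(\reps\,i^*)(i^*\,\reta)=\id_{i^*}$, and the remaining three triangles are dispatched as the symmetric and formally dual cases (the paper happens to write out the $\rproj^2$ triangle while you write out $\rproj^1$, which is immaterial). Your swap of tensor slots needs no braiding, just repeating the argument with the roles of the factors exchanged, exactly as the paper's ``symmetrical'' remark intends.
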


\begin{proof}
Consider the following diagram:
\[
\xymatrix{
& i_* Y \otimes X
 \ar `l[dl] `[ddd]_{\rproj^2}   [ddd]
 \ar[rr]^-{\id \, \otimes \, \reta}
  \ar[d]^{\reta}
   \ar@{}[dddl]_{\textrm{def.}}
   && 
i_* Y \otimes i_*i^* X
 \ar[d]_{\reta}
  \ar `r[r] `[ddd]^{\mathrm{lax}} [ddd]
   \ar@{}[dddr]^{\textrm{def.}}
  & \\
& i_*i^* (i_* Y \otimes X)
 \ar[rr]^-{i_*i^* (\id \,\otimes\, \reta)}
  \ar[d]_{\simeq}^{i_*(\otimes_i)} &&
i_*i^* ( i_* Y \otimes i_*i^* X )
  \ar[d]^\simeq_{i_*(\otimes_i)} & \\
& i_*( i^*i_* Y \otimes i^*X )
 \ar[rr]^-{i_*(\id\,\otimes\,i^*\reta)}
  \ar[d]^{i_*(\reps\,\otimes\,\id)} &&
i_*(i^*i_* Y \otimes i^*i_*i^* X)
 \ar[d]_{i_*(\reps\,\otimes\,\reps)}  & \\
& i_*(Y \otimes i^*X) \ar@{=}[rr] && i_*(Y \otimes i^*X) &
}
\]
The top square commutes by the naturality of~$\reta$, the middle one by the naturality of~$\otimes_i$, and the bottom one by the triangular identity $(\reps \,i^*)(i^* \,\reta)=\id_{i^*}$ for the adjunction~$i^*\dashv i_*$. 
This proves the fourth claimed identity. 
The proofs for the other three are symmetrical and are left to the reader.
\end{proof}

\begin{Lem} \label{Lem:decomps-Frob}
The multiplication $\mu$ and comultiplication $\delta$ on $A:=i_!(\Unit_H)=i_*(\Unit_H)$ are related to the projection maps  by the following commutative triangles:
\[
\vcenter{
\xymatrix@C=2pt{
& i_*\Unit \otimes i_*\Unit \ar[dl]_{\rproj^1}^\simeq \ar[dr]^{\rproj^2}_\simeq \ar[dd]_{\mu} & \\
i_*(i^*i_* \Unit \otimes \Unit) \ar[dr]_{i_*(\reps \,\otimes\, \Unit)} & & i_*(\Unit \,\otimes\, i^*i_* \Unit) \ar[dl]^{i_*(\Unit \,\otimes\, \reps)} \\
& i_*\Unit &
}}
\quad\quad
\vcenter{
\xymatrix@C=2pt{
& i_!\Unit \ar[dl]_{i_!(\leta \,\otimes\, \Unit)} \ar[dr]^{i_!(\Unit \,\otimes\, \leta)} \ar[dd]_\delta & \\
i_!(i^*i_!\Unit \otimes \Unit) \ar[dr]_{\lproj^1}^\simeq & & i_!(\Unit \otimes i^*i_! \Unit) \ar[dl]_\simeq^{\lproj^2} \\
& i_!\Unit \otimes i_!\Unit &
}}
\]
\end{Lem}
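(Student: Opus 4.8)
The plan is to reduce everything to \Cref{Lem:proj-lax-colax}, which already packages the interaction of the projection maps with the lax and colax structures of $i_*=i_!$; after that, the only remaining content is a zig‑zag identity together with naturality.

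First I would recall, from \Cref{Prop:induced-co-monoid}\,\eqref{it:co-monoid-unit}, that the multiplication $\mu$ on $A=i_*(\Unit_H)$ is by construction the composite of the lax structure map $\mathrm{lax}\colon i_*\Unit_H\otimes i_*\Unit_H\to i_*(\Unit_H\otimes\Unit_H)$ with $i_*$ of the unitor $\Unit_H\otimes\Unit_H\cong\Unit_H$, and dually that the comultiplication $\delta$ on $A=i_!(\Unit_H)$ is $i_!$ of the inverse unitor followed by the colax structure map $\mathrm{colax}\colon i_!(\Unit_H\otimes\Unit_H)\to i_!\Unit_H\otimes i_!\Unit_H$. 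I would note once and for all that the left and right unitors coincide at the tensor unit, so that the unitor isomorphisms $\Unit_H\otimes\Unit_H\cong\Unit_H$ tacit in the two diagrams are unambiguous.

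Then I would prove the first (leftmost) triangle in detail and indicate that the other three are entirely analogous. Specializing \Cref{Lem:proj-lax-colax} to $X=A$ and $Y=\Unit_H$, the right projection isomorphism $\rproj^1_{A,\Unit_H}$ factors as $\mathrm{lax}\circ(\reta\otimes\id)$. Post‑composing with $i_*(\reps_{\Unit_H}\otimes\id_{\Unit_H})$ and then $i_*$ of the unitor --- which is the arrow labelled $i_*(\reps\otimes\Unit)$ in the statement --- I would use naturality of the lax structure map of $i_*$ in its first variable, along $\reps_{\Unit_H}\colon i^*i_*\Unit_H\to\Unit_H$, to commute $i_*(\reps_{\Unit_H})$ past $\mathrm{lax}$, and then the triangular identity $i_*(\reps_{\Unit_H})\circ\reta_{i_*\Unit_H}=\id_{i_*\Unit_H}$ of the adjunction $i^*\adj i_*$ to kill that factor, leaving exactly $i_*(\text{unitor})\circ\mathrm{lax}=\mu$. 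The second triangle of the first diagram is the same argument with $\rproj^2$ factored as $\mathrm{lax}\circ(\id\otimes\reta)$, naturality of $\mathrm{lax}$ in its second variable, and the same zig‑zag identity. The two triangles of the second diagram are the order‑reversed versions: one factors $\lproj^1_{A,\Unit_H}$, resp.\ $\lproj^2_{\Unit_H,A}$, as $(\leps\otimes\id)\circ\mathrm{colax}$, resp.\ $(\id\otimes\leps)\circ\mathrm{colax}$, via \Cref{Lem:proj-lax-colax}, and then invokes naturality of the colax structure of $i_!$ together with the triangular identity $\leps_{i_!\Unit_H}\circ i_!(\leta_{\Unit_H})=\id_{i_!\Unit_H}$ of $i_!\adj i^*$, recovering the comultiplication $\delta$ of \Cref{Prop:induced-co-monoid}\,\eqref{it:co-monoid-unit}.

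There is no serious obstacle here; the only two points requiring a little care are: (i) the implicit unitors mentioned above, in particular the fact that both halves of each triangle land on the \emph{same} $\mu$ (resp.\ $\delta$) precisely because the two unitors coincide at $\Unit_H$; and (ii) correctly specializing the variables in \Cref{Lem:proj-lax-colax}, namely taking $X$ to be the induced object $A$ itself and $Y=\Unit_H$, so that $i^*X=i^*i_*\Unit_H$ is exactly the object on which $\reps$, resp.\ $\leta$, is applied in the two diagrams.
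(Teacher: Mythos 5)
Your proof is correct, and the specializations you flag (taking $X=A$, $Y=\Unit_H$ in \Cref{Lem:proj-lax-colax}, and the tacit unitors) are exactly the points that need care; the naturality-of-lax step and the zig-zag identities $i_*(\reps_\Unit)\circ\reta_{i_*\Unit}=\id$ and $\leps_{i_!\Unit}\circ i_!(\leta_\Unit)=\id$ do the job as you describe, and what remains is precisely $\mu=i_*(\mathrm{unitor})\circ\mathrm{lax}_{\Unit,\Unit}$ (dually $\delta=\mathrm{colax}_{\Unit,\Unit}\circ i_!(\mathrm{unitor}^{-1})$) from \Cref{Prop:induced-co-monoid}. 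The paper's own proof takes a slightly different, more direct route: it does not invoke \Cref{Lem:proj-lax-colax} at all, but simply unfolds the definitions of $\mu$ and of $\rproj^2$ (both begin with $\reta$ followed by $i_*$ of the inverse strong structure of $i^*$) and observes that the remaining comparison is the trivial bifunctoriality factorization $i_*(\reps\otimes\reps)=i_*(\Unit\otimes\reps)\circ i_*(\reps\otimes\Unit)$, the other three triangles being symmetric. So the paper's argument is a one-diagram rereading of the definitions, arguably shorter, whereas yours derives the lemma as a formal consequence of \Cref{Lem:proj-lax-colax} plus general monoidal-adjunction yoga (naturality of the induced lax/colax structures and the triangle identities); your version has the mild advantage of making clear that the statement holds for any (co)lax structure induced by a monoidal adjunction in this way, at the cost of routing through an extra lemma.
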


\begin{proof}
The following commutative diagram displays one of the claimed relations:
\[
\xymatrix@R=16pt{
& A\otimes A
 \ar@{}[dddr]|{\textrm{def.}}
 \ar[d]_{\reta}
  \ar@/^6ex/[dddr]^{\rproj^2}_\simeq
  \ar `l[dl] `[ddddd]_\mu^{\;\;\;\textrm{def.}}   [ddddd]
    & \\
& i_*i^*( i_*\Unit \otimes i_* \Unit) \ar[d]^\simeq_{i_*(\otimes_i)} & \\
& i_*(i^*i_*\Unit \otimes i^*i_*\Unit) \ar[dd]_{i_*(\reps\,\otimes\, \reps)} \ar[dr]^{\;\; i_*(\reps \,\otimes\, \Unit)} & \\
& & i_*(\Unit \otimes i^*i_*\Unit) \ar[dl]^{\;\; i_*(\Unit \,\otimes\, \reps)} \\
& i_*(\Unit \otimes \Unit) \ar[d]^\simeq & \\
& A&
}
\]
The other three decompositions are symmetrical and are left to the reader.
\end{proof}

\begin{Cor} \label{Cor:specialness}
The special relation holds: $\mu \circ \delta = \id_A$.
\end{Cor}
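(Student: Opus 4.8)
The plan is to read off decompositions of $\mu$ and $\delta$ from \Cref{Lem:decomps-Frob}, cancel the projection isomorphisms using \Cref{Thm:proj-formula}, and conclude with the special Frobenius relation~\eqref{eq:special-Frob}. Writing $A = i_!(\Unit_H) = i_*(\Unit_H)$, the left-hand triangles of \Cref{Lem:decomps-Frob} express
\[
\mu = i_*(\reps \otimes_H \Unit) \circ \rproj^1
\qquad\text{and}\qquad
\delta = \lproj^1 \circ i_!(\leta \otimes_H \Unit),
\]
where (as in the explicit formulas of \Cref{Prop:induced-co-monoid}) the shorthands $i_*(\reps \otimes_H \Unit)$ and $i_!(\leta \otimes_H \Unit)$ silently absorb the coherence isomorphism $i_*(\Unit_H \otimes_H \Unit_H) \cong A$, and here $\rproj^1,\lproj^1$ are the first projection maps of \Cref{Def:Frobs} for our fixed $i$ at $X = i_*\Unit_H$, $Y = \Unit_H$. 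Composing and using that $\rproj^1$ and $\lproj^1$ are mutually inverse by \Cref{Thm:proj-formula} (which applies because $\odot^{\cat M}$ is a bimorphism, by \Cref{Def:2Green}), the two projection maps cancel:
\[
\mu \circ \delta
\;=\; i_*(\reps \otimes_H \Unit) \circ \rproj^1 \circ \lproj^1 \circ i_!(\leta \otimes_H \Unit)
\;=\; i_*(\reps \otimes_H \Unit) \circ i_!(\leta \otimes_H \Unit).
\]

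Next I would invoke rectification (\Cref{Rem:rectification}): since $i_! = i_*$ as functors of $\bbA$, and $i^*i_! = i^*i_*$, the composite on the right is $i_*$ applied to a single morphism of $\cat M(H)$, namely (after unwinding the coherence isomorphisms, which appear symmetrically in $\mu$ and $\delta$ and hence match on the nose) the composite $(\reps \otimes_H \id_{\Unit_H}) \circ (\leta \otimes_H \id_{\Unit_H})$. By the interchange law in the monoidal category $\cat M(H)$ this equals $(\reps \circ \leta) \otimes_H \id_{\Unit_H}$, and the special Frobenius relation~\eqref{eq:special-Frob} gives $\reps \circ \leta = \id_{\Id}$, hence $(\reps \circ \leta) \otimes_H \id_{\Unit_H} = \id_{\Unit_H \otimes_H \Unit_H}$. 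Applying $i_* = i_!$ and reinstating the coherence isomorphisms, we get $\mu \circ \delta = \id_A$.

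I do not anticipate a genuine obstacle: the corollary is essentially an assembly of results already in place. The only point requiring a little care is the bookkeeping of the coherence isomorphisms $i_*(\Unit_H \otimes_H \Unit_H) \cong A$ hidden in the formulas for $\mu$ and $\delta$; one must check they cancel exactly, which is immediate once the composite is written out in full as in \Cref{Prop:induced-co-monoid} and functoriality of $i_* = i_!$ is used. (Alternatively, one could run the identical argument with the second projection maps $\rproj^2,\lproj^2$ and the right-hand triangles of \Cref{Lem:decomps-Frob}, which is why the statement is symmetric.)
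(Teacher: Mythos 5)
Your proposal is correct and follows essentially the same route as the paper: decompose $\mu$ and $\delta$ via the triangles of \Cref{Lem:decomps-Frob}, cancel $\rproj^1\circ\lproj^1=\id$ using \Cref{Thm:proj-formula}, and conclude from the special Frobenius relation~\eqref{eq:special-Frob} applied under $i_*=i_!$ (the paper assembles this into one commutative diagram and notes that either the left or right pair of triangles suffices, exactly as in your parenthetical remark about $\rproj^2,\lproj^2$). The bookkeeping of the unitor isomorphisms works out as you describe, so there is no gap.
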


\begin{proof}
The diagram
\[
\xymatrix@R=16pt{
& A
  \ar[dr]^{\;\;\;i_!(\Unit \,\otimes\, \leta)}
  \ar[dl]_{i_!(\leta \,\otimes\, \Unit)\;\;} 
  \ar[dd]_\delta & \\
 i_!(i^*i_! \Unit \otimes \Unit)
   \ar@{=}[dd] \ar[dr]|{\lproj^1} && 
i_!( \Unit \otimes i^*i_! \Unit)
  \ar[dl]|{\;\;\lproj^2\;} \ar@{=}[dd] \\
&A \otimes A
  \ar[dd]_\mu 
  \ar[dr]|{\rproj^2}
  \ar[dl]|{\rproj^1} & \\
i_*(i^*i_! \Unit \otimes \Unit)
  \ar[dr]_{i_*(\reps\,\otimes \,\Unit)\;\;} & & 
i_*(\Unit \otimes i^*i_*\Unit)
  \ar[dl]^{\;\;i_*(\Unit \,\otimes\, \reps)} \\
 & A &
}
\]
is commutative by \Cref{Thm:proj-formula} and \Cref{Lem:decomps-Frob}. 
We conclude with the special Frobenius property~\eqref{eq:special-Frob} (either the left or right half of the diagram suffices).
\end{proof}

\begin{Thm} \label{Thm:Frobenius}
Let $\cat M$ be any Green 2-functor. 
Let $i\colon H\to G$ be in~$\JJ$ and
write $A:=i_!(\Unit_H)=i_*(\Unit_H)$. 
Then the canonical monoid and comonoid structures on $A$ obtained from the adjunctions $i_!\dashv i^*\dashv i_*$ (\Cref{Prop:induced-co-monoid}) turn $A$ into a special Frobenius object in the monoidal category~$\cat M(G)$, which is commutative as soon as $\cat M$ is braided. In particular, $A$ is a separable algebra.
\end{Thm}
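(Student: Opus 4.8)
The plan is to exhibit $A$ as the object canonically representing the Frobenius monad $\bbM:=i_*i^*=i_!i^*$ on $\cat M(G)$, and then transport the Frobenius structure from $\bbM$ to $A$. Since $i_!=i_*$ as functors, the two adjunctions $i_!\dashv i^*\dashv i_*$ make $\bbM$ a Frobenius monad by \Cref{Prop:Frob-co-monad}\,\eqref{it:adj-Frob}: the monad $(\bbM,i_*\reps i^*,\reta)$, the comonad $(\bbM,i_!\leta i^*,\leps)$, and the Frobenius relation~\eqref{eq:Frob-monad}. On the other hand, applying the right projection isomorphism of \Cref{Thm:proj-formula} to $Y=\Unit_H$ and then the left unitor of $\cat M(H)$ under $i_*$ produces a natural isomorphism of endofunctors of $\cat M(G)$,
\[
\theta_X\colon\quad A\otimes X \;=\; i_*(\Unit_H)\otimes X \;\xrightarrow[\ \sim\ ]{\ \rproj^2\ }\; i_*(\Unit_H\otimes i^*X) \;\xrightarrow[\ \sim\ ]{}\; i_*i^*X \;=\; \bbM(X),
\]
whose inverse is built from $\lproj^2=(\rproj^2)^{-1}$, again by \Cref{Thm:proj-formula}. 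Thus $\theta$ identifies the endofunctor $A\otimes(-)$ with $\bbM$.

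The heart of the argument is to check that $\theta$ is an isomorphism not merely of endofunctors but of \emph{monads} and of \emph{comonads}, where $A\otimes(-)$ carries the monad structure $(\mu\otimes(-),\iota\otimes(-))$ and comonad structure $(\delta\otimes(-),\epsilon\otimes(-))$ determined by the monoid and comonoid structures on $A$ from \Cref{Prop:induced-co-monoid}\,\eqref{it:co-monoid-unit}. The unit and counit compatibilities are immediate from the definitions of $\iota$ and $\epsilon$. For the multiplication, I would combine the factorization of $\mu$ through $\rproj^2$ (first triangle of \Cref{Lem:decomps-Frob}), the identification of $\rproj^2$ with the lax structure of $i_*$ precomposed with $\id\otimes\reta$ (fourth triangle of \Cref{Lem:proj-lax-colax}), the naturality of the projection maps (as components of the pseudonatural $\boxdot$), and a triangle identity for $i^*\dashv i_*$; the conclusion is that $\theta$ carries $\mu\otimes(-)$ to $i_*\reps i^*$. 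The comultiplication compatibility is handled dually, via the colax structure of $i_!$, the matching triangles of the two lemmas, and $\lproj^2=(\rproj^2)^{-1}$, showing that $\theta^{-1}$ carries $i_!\leta i^*$ to $\delta\otimes(-)$. This bookkeeping --- keeping careful track of which of the four projection maps, which unitor, and which tensor slot is in play --- is the only non-formal step; Lemmas~\ref{Lem:proj-lax-colax} and~\ref{Lem:decomps-Frob} were recorded precisely to make it routine, and it is the main obstacle.

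Once $\theta$ is an isomorphism of monads and of comonads, the Frobenius relation~\eqref{eq:Frob-monad} for $\bbM$ transports to the same relation for $A\otimes(-)$ in $\End(\cat M(G))$; evaluating that equality of natural transformations at the unit object $\Unit_G$ and inserting the canonical unitor isomorphisms $A\otimes\Unit_G\cong A$, $A\otimes A\otimes\Unit_G\cong A\otimes A$, etc., yields exactly the Frobenius relation for the monoid/comonoid $A$ in $\cat M(G)$. Hence $A$ is a Frobenius object, and it is special by \Cref{Cor:specialness}, which already supplies $\mu\circ\delta=\id_A$. If $\cat M$ is braided, then $i^*$ is braided strong monoidal (\Cref{Rem:pointwise-structure}), so $i_*$ is braided lax monoidal by \Cref{Prop:induced-co-monoid}\,\eqref{it:lax} and therefore sends the commutative monoid $\Unit_H$ to a commutative monoid; thus $A$ is commutative (equivalently cocommutative) special Frobenius. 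Finally, any special Frobenius algebra is separable: the comultiplication $\delta$ is a morphism of $A$-$A$-bimodules (part of the Frobenius axioms) and $\mu\circ\delta=\id_A$, so $\delta$ is a bimodule-linear section of the multiplication, which is precisely separability (\cf \Cref{Rec:Frob}).
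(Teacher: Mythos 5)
Your proposal is correct and takes essentially the same route as the paper: transport the Frobenius monad structure on $\bbM=i_*i^*=i_!i^*$ (from \Cref{Prop:Frob-co-monad}) to $A\otimes(-)$ along the projection isomorphism $\pi_X=\rproj^2_{\Unit,X}$, check it is simultaneously an isomorphism of monads and of comonads, and evaluate the transported Frobenius relation at $\Unit_G$, with speciality, commutativity and separability handled exactly as you indicate. The only difference is that the paper outsources the monad/comonad-morphism verification for $\pi$ to \cite[Lemma~2.8]{BalmerDellAmbrogioSanders15} (applied once directly and once to the opposite adjunction), using \Cref{Lem:proj-lax-colax} to match conventions, whereas you sketch that diagram chase by hand—precisely the step both you and the paper identify as the crux.
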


\begin{proof}
The monoid $(A,\mu,\iota)$ is separable because by \Cref{Cor:specialness} its multiplication admits a bilinear section, namely the comultiplication~$\delta$.

To prove that the monoid and comonoid of \Cref{Prop:induced-co-monoid} assemble into a Frobenius structure on~$A$, we must still verify that $\delta$ is indeed bilinear with respect to~$\mu$.
By a well-known reduction (see \eg \cite[Lemma~5.4]{HeunenVicary19}), it suffices to have the following \emph{Frobenius relation}:
\begin{equation}  \label{eq:Frob}
(A \otimes \mu)(\delta \otimes A) = (\mu \otimes A )( A \otimes \delta) .
\end{equation}
A direct verification of this identity from the definitions appears to be very complicated. 
We will instead translate this problem in terms of monads, after which it will become  trivial.

To this end, note that we have two monads on the category~$\cat M(G)$. 
The first monad $\bbM$ is induced by the adjunction $i^*\dashv i_*$, whose underlying functor (also denoted~$\bbM$) is $i_*i^*\colon \cat M(G)\to \cat M(G)$, and whose multiplication and unit are
\[
m:= i_*\,\reps \,i^* \colon \bbM^2 = i_*i^*i_*i^* \longrightarrow i_*i^* = \bbM
\quad \textrm{ and } \quad
u:=\reta\colon \Id \longrightarrow i_*i^* = \bbM
\]
respectively.
The second monad is $A\otimes(-)\colon \cat M(G)\to \cat M(G)$, with multiplication and unit obtained by tensoring with the multiplication $\mu$ and unit~$\iota$ of the monoid~$A$.
For $Y=\Unit$, the second right projection map specializes to a natural isomorphism
\begin{equation} \label{eq:proj-monads}
\pi_X := \rproj^2_{\Unit,X} \colon A\otimes X = i_*(\Unit) \otimes X \overset{\sim}{\longrightarrow} i_*(\Unit \otimes i^*X ) \cong \bbM X
\end{equation}
which happens to always be an isomorphism between the two monads, \ie  $\pi_X$ identifies the multiplications and units.
This is proved in \cite[Lemma~2.8]{BalmerDellAmbrogioSanders15} (see also \cite[Lemma 6.5]{BLV11});
note that the projection map defined in \emph{loc.\,cit.\ }is precisely our~\eqref{eq:proj-monads} thanks to \Cref{Lem:proj-lax-colax}.

We also have a dual result, as follows. The adjunction $i_!\dashv i^*$ can be viewed as an adjunction $(i^*)^\op \dashv (i_!)^\op$ between $\cat M(G)^\op$ and~$\cat M(H)^\op$, where the left (!) adjoint $(i^*)^\op$ is a strong monoidal functor for the canonical monoidal structures on the opposite categories.  
Therefore we may apply \cite[Lemma~2.8]{BalmerDellAmbrogioSanders15} to this adjunction as well. 
Translating back in terms of the original adjunction $i_!\dashv i^*$ on the original categories $\cat M(G)$ and~$\cat M(H)$, and using another identity of \Cref{Lem:proj-lax-colax}, this says that $(\rproj^2_{Y,X})^{-1}=\lproj^2_{Y,X}$ specializes for $Y=\Unit$ to a morphisms of \emph{comonads}
\[
\pi_X^{-1} \colon \bbM  X= i_!i^* X \cong i_!(\Unit \otimes i^* X) \overset{\sim}{\longrightarrow} A \otimes X .
\]
which, by \Cref{Thm:proj-formula}, is the inverse map of~\eqref{eq:proj-monads}. Now $\bbM$ is the comonad on $\cat M(G)$ induced by the adjunction $i_!\dashv i^*$, whose comultiplication and counit are
\[
d:= i_! \,\leta\, i^* \colon \bbM \Longrightarrow \bbM^2
\quad \textrm{ and } \quad
e:=\leps \colon \bbM \Longrightarrow \Id
,
\]
and $A\otimes(-)$ is equipped with the comultiplication $\delta\otimes (-)$ and counit $\epsilon \otimes (-)$ induced by tensoring with the comonoid structure of~$A$.

Altogether, $\pi_X$ is both an isomorphism of monads and comonads. 

Consider now the following diagram of natural transformations between endofunctors of~$\cat M(G)$, where $\pi^{(2)}$ and $\pi^{(3)}$ denote the twofold and threefold application of $\pi$ (\ie the horizontal composites of $\pi$ with itself in the 2-category of categories):
\[
\xymatrix@!C@C=10pt{
& A^{\otimes 2} \otimes (-)
 \ar[dl]_{\delta \otimes A \otimes-}
  \ar[ddr]^(.3){A \otimes \delta \otimes -}|{\phantom{M}}
   \ar[rrr]_-\sim^-{\pi^{(2)}} & & &
 \bbM^2
   \ar[dl]_{d \, \bbM}
    \ar[ddr]^{\bbM \, d} & \\
A^{\otimes 3} \otimes (- )
 \ar[ddr]_{A \otimes \mu \otimes -}
  \ar[rrr]^(.3){\pi^{(3)}}_(.3){\sim} & & &
 \bbM^3
  \ar[ddr]^(.3){\bbM \, m} & & \\
& & A^{\otimes 3}\otimes (-)
 \ar[dl]^{\mu \otimes A \otimes -}
  \ar[rrr]^(.35){\pi^{(3)}}_(.35){\sim}|(.50){\phantom{M}} & & & 
\bbM^3
  \ar[dl]^{m \, \bbM} \\
& A^{\otimes 2} \otimes (-)
 \ar[rrr]^-{\pi^{(2)}}_-\sim & & & 
\bbM^2 &
}
\]
By the previous discussion, the four squares involving powers of $\pi$ all commute. 
The square on the right also commutes because $\bbM$ is a Frobenius monad; see \Cref{Prop:Frob-co-monad}\,\eqref{it:adj-Frob}.
We conclude that the left square also commutes. By applying this to the unit object~$\Unit \in \cat M(G)$ we obtain the claimed Frobenius relation~\eqref{eq:Frob}. 
\end{proof}

\begin{Rem} \label{Rem:self-duality}
Like all Frobenius objects, $A$ is its own left and right tensor-dual. The unit and counit 
 $\alpha \colon \Unit\to A\otimes A$
  and 
 $ \beta \colon A\otimes A\to \Unit$
of this duality are given by
\[ 
\alpha= \delta \circ \iota
\quad \textrm{ and } \quad
\beta=\epsilon\circ \mu \,,
\] 
where $(A,\mu,\iota)$ and $(A,\delta,\epsilon)$ are the monoid and comonoid structures of \Cref{Prop:induced-co-monoid}\,\eqref{it:co-monoid-unit}.
Note that the two projection formulas and the two adjunctions easily imply that $A$ is self-dual; but the fact that one can specifically take the above two maps to establish a duality utilises the Frobenius structure on~$A$, \ie ultimately the fact that the left and right projection maps are mutual inverses.
\end{Rem}

\section{The canonical half-braiding}
\label{sec:half-braiding}%

Let $\cat M$ be a Green 2-functor and let $i\colon H\to G$ be in~$\JJ$. 
In this section we upgrade the previous results by showing that the induced Frobenius object $A(i):= i_!(\Unit) =i_* (\Unit)$ of \Cref{sec:Frob} belongs to the monoidal center of~$\MM(G)$ in the sense of \cite{JoyalStreet91b}. This means that it comes equipped with a half-braiding, \ie a prescribed way for $A(i)$ to naturally commute with all objects of~$\MM(G)$. 
Moreover, this half-braiding is fully compatible with the Frobenius structure on~$A(i)$.
Recall:

\begin{Def} 
\label{Rec:half-braiding}
If $A$ is an object in a monoidal category~$\cat C$, a \emph{half-braiding on~$A$} is a family $\sigma = \{\sigma_X\colon A \otimes X\overset{\sim}{\to} X\otimes A\}_{X\in \cat C}$ of isomorphisms of $\cat C$ natural in~$X$ and satisfying the braid relation
\begin{equation} \label{eq:half-braiding}
\sigma_{X\otimes Y} = (X \otimes \sigma_Y)(\sigma_X \otimes Y) \quad \textrm{ for all } X,Y\in \cat C\,.
\end{equation}
(This also implies $\sigma_{\Unit}=\id_A$.)
Pairs $(A,\sigma)$ consisting of an object $A \in \cat C$ and a half-braiding $\sigma$ on~$A$, together with morphisms of $\cat C$ which commute with the half-braidings, form a category  $\cat Z(\cat C)$ called the \emph{monoidal center} or \emph{Drinfeld center} of~$\cat C$. It is a braided monoidal category for the evident tensor product and braiding induced, respectively, by the tensor of $\cat C$ and by the specified half-braidings on its objects; this makes the forgetful functor $\cat Z(\cat C)\to \cat C$ strong monoidal. 
We refer to \cite[Ch.\,VIII]{Kassel95} for details, context and examples.
Conceptually, consider the delooping $\mathrm B\cat C$, \ie the bicategory with a single object whose monoidal endo-category is~$\cat C$. 
Then $\cat Z(\cat C)$ is precisely just $\End(\Id_{\mathrm B\cat C})$, the (naturally braided monoidal)  category of endo-pseudonatural transformations of the identity pseudofunctor of~$\mathrm B\cat C$.  
\end{Def}

\begin{Thm} \label{Thm:half-braiding}
Let  $\MM$ be a Green 2-functor, let $i\colon H\to G$ be in~$\JJ$, and recall the left and right projection isomorphisms $\lproj^1$ and $\rproj^2$ of \Cref{Thm:proj-formula}. Then:
\begin{enumerate}[\rm(1)]
\item
\label{it:hb1}
Write $A:=A(i)$ for short. The family of composite isomorphisms
\[
\sigma_X \colon 
\xymatrix@C=17pt{
A\otimes X \ar[rr]^-{\rproj^2_{\Unit, X}}_-\sim && i_*(\Unit \otimes i^*X) \cong i_*(i^*X \otimes \Unit) \ar[rr]^-{\lproj^1_{X,\Unit}}_-\sim && X\otimes A
}
\]
defines a half-braiding $\sigma$ on~$A$ in the monoidal category~$\MM(G)$ (\Cref{Rec:half-braiding}).
In case $\cat M$ is braided, $\sigma$ coincides with the given braiding on~$\cat M(G)$.
\item 
\label{it:hb2}
The canonical Frobenius structure on~$A$ of \Cref{Thm:Frobenius} turns $(A,\sigma)$ into a special commutative Frobenius object in the monoidal center~$\cat Z(\cat M(G))$.
\item 
\label{it:hb3}
The right projection maps $\rproj^2\colon A\otimes (-)  \Rightarrow i_*i^*$ and $\rproj^1 \colon (-)\otimes A \Rightarrow i_*i^*$
are isomorphisms of colax monoidal and lax monoidal endofunctors of~$\cat M(G)$.
\end{enumerate}
\end{Thm}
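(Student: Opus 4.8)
The plan is to reduce all three statements to properties of the Frobenius monad $\bbM:=i_*i^*=i_!i^*$ on $\cat M(G)$ of \Cref{Prop:Frob-co-monad}\,\eqref{it:adj-Frob}. The key tools are the two natural isomorphisms $\rproj^2_{\Unit,-}\colon A\otimes(-)\overset{\sim}{\Rightarrow}\bbM$ and $\rproj^1_{-,\Unit}\colon (-)\otimes A\overset{\sim}{\Rightarrow}\bbM$ obtained by specialising the right projection isomorphisms to $Y=\Unit$ (as in~\eqref{eq:proj-monads}); equivalently, writing $\ell_{X,Z}:=\rproj^1_{X,i^*Z}\colon X\otimes\bbM Z\overset{\sim}{\to}\bbM(X\otimes Z)$ and $r_{Z,X}:=\rproj^2_{i^*Z,X}\colon \bbM Z\otimes X\overset{\sim}{\to}\bbM(Z\otimes X)$, the projection formulas turn $\bbM$ into a functor carrying compatible left and right $\cat M(G)$-linearities, and $\sigma_X$ is by construction the comparison $\ell_{X,\Unit}^{-1}\circ r_{\Unit,X}\colon A\otimes X\to\bbM X\to X\otimes A$ between these two structures on the unit object.

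For part~\eqref{it:hb1}, naturality of $\sigma$ in $X$ is immediate from the naturality of $\rproj^1,\rproj^2$ and of the unit coherences, and $\sigma_\Unit=\id_A$ is the $Z=\Unit$ unit coherence. The hexagon axiom~\eqref{eq:half-braiding} unwinds, via the identifications above, into a ``middle associativity'' compatibility between $\ell$ and $r$: the two evident ways of using $\ell$ and $r$ to pass from $X\otimes\bbM(Y)\otimes Z$ to $\bbM(X\otimes Y\otimes Z)$ agree. This I would prove by the same pasting technique as in \Cref{Thm:proj-formula} and \Cref{Thm:proj-implies-bimorphism}: decompose $\ell$ and $r$ as mates of the external pairing $\boxdot$ in its first and second variable respectively, and use that $\boxdot$ is a pseudonatural transformation of \emph{two}-variable 2-functors, so that its structure 2-cells for $(i,\Id)$ and $(\Id,i)$ fit together coherently; combined with the strict Mackey formula~\eqref{eq:strict-MF} for the squares of \Cref{Lem:Mackey-for-proj} and~\eqref{eq:squares-quasi-Green}, the required square commutes. (This coherent commutation of the two single-variable induction-preservations is precisely what makes $\boxdot$ a \emph{bimorphism} in the sense of \Cref{Def:bimorphism}, not merely a pairing preserving induction in each variable separately.) Finally, in the braided case \Cref{Rem:proj-braid} yields $\rproj^2_{\Unit,X}\circ\beta^G_{X,A}=\rproj^1_{X,\Unit}$ modulo coherence (the braiding against $\Unit$ being a unit coherence), whence $\sigma_X=(\rproj^1_{X,\Unit})^{-1}\circ\rproj^2_{\Unit,X}$ is the given braiding.

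For part~\eqref{it:hb2}, once we know that $\mu,\iota,\delta,\epsilon$ are morphisms of $\cat Z(\cat M(G))$, the special Frobenius relations hold already in $\cat M(G)$ (\Cref{Cor:specialness} and \Cref{Thm:Frobenius}) and hence in $\cat Z(\cat M(G))$ because the forgetful functor $\cat Z(\cat M(G))\to\cat M(G)$ is faithful and strong monoidal, while commutativity amounts to the single identity $\mu\circ\sigma_A=\mu$. To see that the structure maps are central: for $\iota=\reta$ and $\epsilon=\leps$ (\Cref{Prop:induced-co-monoid}\,\eqref{it:co-monoid-unit}) this reduces, via the unit coherences, to the triangle identities of the two adjunctions; for $\mu$ and $\delta$, transport along the isomorphism $\rproj^2_{\Unit,-}$, which (as recalled in the proof of \Cref{Thm:Frobenius}) is simultaneously an isomorphism of monads $(A\otimes(-),\mu\otimes(-),\iota\otimes(-))\cong(\bbM,i_*\reps i^*,\reta)$ and of comonads $(A\otimes(-),\delta\otimes(-),\epsilon\otimes(-))\cong(\bbM,i_!\leta i^*,\leps)$. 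Centrality of $\mu$ (resp.\ $\delta$) then translates into the compatibility of $i_*\reps i^*$ (resp.\ $i_!\leta i^*$) with the bimodule structure $(\ell,r)$, again a pasting of the above type using the naturality of $\reps$ (resp.\ $\leta$); and the commutativity identity $\mu\circ\sigma_A=\mu$ becomes, after transport, a short chase combining \Cref{Lem:decomps-Frob}, \Cref{Thm:proj-formula} and the special Frobenius relation~\eqref{eq:special-Frob}. Part~\eqref{it:hb3} is then essentially bookkeeping: since $A$ is a Frobenius object of the center, $A\otimes(-)$ and $(-)\otimes A$ are canonically lax and colax monoidal endofunctors of $\cat M(G)$ (via their (co)monoid structures and $\sigma$), while $\bbM=i_*i^*$ is lax monoidal via the strong structure of $i^*$ and the lax structure of $i_*$, and colax via the colax structure of $i_!$; unwinding the definitions, the assertion that $\rproj^1_{-,\Unit}$ and $\rproj^2_{\Unit,-}$ intertwine the lax, resp.\ colax, structures becomes a diagram assembled entirely from the commutative triangles of \Cref{Lem:proj-lax-colax} and \Cref{Lem:decomps-Frob} together with the naturality of the projection maps, hence commutes.

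The main obstacle is the pasting computation underlying part~\eqref{it:hb1}, namely the coherent compatibility of the two $\cat M(G)$-linearities of $\bbM$ (equivalently, that the two single-variable induction-preservations of $\boxdot$ commute). This is the one place where a genuinely new 2-categorical verification is required rather than a reduction to \Cref{Thm:proj-formula} and the lemmas of \Cref{sec:Frob}; everything else, once the (co)monad translation is in place, is a matter of tracking adjunction triangles and the omnipresent unit coherences $i^*\Unit\cong\Unit$ and $X\otimes\Unit\cong X$.
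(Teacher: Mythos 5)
Your route is genuinely different from the paper's. The paper does not verify the half-braiding, centrality and (co)monoidality axioms by hand: it first sets up the dictionary of \Cref{Prop:Hopf-monad}, identifying the projection isomorphisms with the Hopf operators of the comonoidal Hopf adjunction $i_!\dashv i^*$ (and dually for $i^*\dashv i_*$), and then imports parts \eqref{it:hb1}--\eqref{it:hb3} wholesale from \cite[Thm.\,6.6]{BLV11}; only the comparison $\sigma=\beta$ in the braided case is checked by a direct chase. Your plan amounts to re-proving, in this special case, the fusion-operator calculus that BLV's theorem packages. That is a legitimate, more self-contained alternative, and your outlines for part \eqref{it:hb2} (transport along the monad/comonad isomorphism $\rproj^2_{\Unit,-}$, faithfulness of $\cat Z(\cat M(G))\to \cat M(G)$ for the equations, the separate check $\mu\circ\sigma_A=\mu$) and part \eqref{it:hb3} are workable; note in passing that the commutativity check needs only \Cref{Lem:decomps-Frob}, \Cref{Thm:proj-formula} and unit coherences, not \eqref{eq:special-Frob}.

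The genuine gap is exactly where you locate the crux, part \eqref{it:hb1}: the hexagon is asserted, not proved, and the reduction you sketch is incomplete. Writing $\sigma_X=\ell_{X,\Unit}^{-1}\circ r_{\Unit,X}$, the identity $\sigma_{X\otimes Y}=(X\otimes\sigma_Y)(\sigma_X\otimes Y)$ requires, besides the ``middle'' compatibility $r_{X,Y}\circ(\ell_{X,\Unit}\otimes Y)=\ell_{X,Y}\circ(X\otimes r_{\Unit,Y})$, also the one-sided associativity coherences expressing $\rproj^1_{X\otimes Y,\,-}$ and $\rproj^2_{-,\,X\otimes Y}$ in terms of the one-variable maps; none of these identities is established anywhere in the paper (they are precisely BLV's fusion-operator identities), so they constitute the actual content to be proved. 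Moreover the mechanism you propose for them is off target: the strict Mackey formula \eqref{eq:strict-MF} only enters to know $\lproj=(\rproj)^{-1}$, which is already \Cref{Thm:proj-formula}; an \emph{equality} of two composites of right mates needs either a direct chase with $\reta,\reps$, the monoidal constraint of~$i^*$ and a triangle identity, or, in the externalized picture, the associator modification of the pseudomonoid together with compatibility of mates with pasting --- the pseudonaturality 2-cells $\boxdot_{i,\Id}$ and $\boxdot_{\Id,i}$ alone do not see the three tensor factors. A secondary slip: in the braided case, \Cref{Rem:proj-braid} at $Y=\Unit$ gives $\rproj^2_{\Unit,X}\circ\beta_{X,A}=i_*(\mathrm{coh})\circ\rproj^1_{X,\Unit}$ and hence $\sigma_X=\beta_{X,A}^{-1}$, whereas the theorem's assertion (and the paper's chase, which uses naturality of $\beta$ in the other slot) is $\sigma_X=\beta_{A,X}$; both happen to hold here, but your argument alone does not deliver the asserted identity.
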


We will obtain most of the theorem's claims by specializing results of~\cite{BLV11}. 
This conveniently saves us from a large number of verifications, but also requires some care in translating between terminologies. 
Here is our dictionary:

\begin{Prop} \label{Prop:Hopf-monad}
Let $\cat M$ be a Green 2-functor and let $i\colon H\to G$ be in~$ \JJ$. 
Then, in the terminology of~\cite{BLV11} (see the proof below for explanations):
\begin{enumerate}[\rm(a)]
\item
\label{it:Hopf}
The adjunction $i_!\dashv i^*$ is a \emph{Hopf (comonoidal) adjunction}.
The endofunctor $E= i_!i^*$ of $\cat M(G)$ inherits the structure of a \emph{comonoidal comonad}, and the endofunctor $F= i^*i_!$ of $\cat M(H)$ the structure of a \emph{Hopf (comonoidal) monad}.

\item 
\label{it:co-Hopf}
Dually, $i^* \dashv i_*$ is a \emph{Hopf monoidal adjunction} turning $E= i_*i^*$ into a \emph{monoidal monad} on $\cat M(G)$ and $F= i^*i_*$ into a \emph{Hopf monoidal comonad} on~$\cat M(H)$. 

\item 
\label{it:Hopf-operators}
The \emph{Hopf operators} $\mathbb H^\ell, \mathbb H^r$ of the comonoidal adjunction $i_!\dashv i^*$ are inverse to those of the monoidal adjunction $i^*\dashv i_*$, which moreover identify with our projection isomorphisms (\Cref{Def:Frobs}) as follows: 
\begin{align*}
\mathbb H^r_{X,Y} &= \lproj^1_{X,Y} \colon i_!(i^*X \otimes Y) \to X \otimes i_!(Y) \\
\mathbb H^\ell_{Y,X} &= \lproj^2_{Y,X}\colon i_!(Y \otimes i^*X) \to i_!(Y) \otimes X \,.
\end{align*}
Similarly, the two \emph{fusion operators} $H^\ell, H^r$ of the Hopf monad $F=i^*i_!$ are inverses to those of the Hopf comonad~$F= i^*i_*$.
\end{enumerate}
\end{Prop}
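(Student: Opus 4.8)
The plan is to recognize the whole situation as an instance of the general theory of comonoidal (\ie opmonoidal) adjunctions and Hopf monads developed in~\cite{BLV11}, and then to match their canonical invariants with the projection maps of \Cref{Def:Frobs}.

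First I would use that $i^*\colon \cat M(G)\to \cat M(H)$ is strong monoidal (\Cref{Rem:pointwise-structure}); by the standard construction, its left adjoint $i_!$ thereby acquires a comonoidal structure, namely the colax structure of \Cref{Prop:induced-co-monoid}\,\eqref{it:colax}, which is the mate of the strong structure of~$i^*$. This makes $i_!\dashv i^*$ a \emph{comonoidal adjunction} in the sense of~\cite{BLV11}, and as such it automatically promotes $F=i^*i_!$ to a comonoidal monad on $\cat M(H)$ and $E=i_!i^*$ to a comonoidal comonad on~$\cat M(G)$; this is the structural content of~\eqref{it:Hopf}. Dually, applying the same observation to the strong monoidal functor $(i^*)^\op\colon \cat M(G)^\op\to\cat M(H)^\op$, whose left adjoint is $(i_*)^\op$, exhibits $i^*\dashv i_*$ as a comonoidal adjunction ``on the opposite categories''; translating back across $(-)^\op$, this is precisely the \emph{monoidal} adjunction of~\eqref{it:co-Hopf}, with monoidal monad $i_*i^*$ on $\cat M(G)$ and monoidal comonad $i^*i_*$ on~$\cat M(H)$.

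Next I would unwind the definition in~\cite{BLV11} of the Hopf operators $\mathbb H^\ell,\mathbb H^r$ of the comonoidal adjunction $i_!\dashv i^*$: up to conventions, each is the comonoidal structure map of~$i_!$ followed by a counit, \ie of the shape $(\leps\otimes\id)\circ(\text{comonoidal structure of }i_!)$. Comparing this with the two commutative triangles of \Cref{Lem:proj-lax-colax} that involve $\lproj^1$ and~$\lproj^2$ identifies them precisely with the left projection maps, $\mathbb H^r_{X,Y}=\lproj^1_{X,Y}$ and $\mathbb H^\ell_{Y,X}=\lproj^2_{Y,X}$; running the same computation on opposite categories, using instead the two triangles of \Cref{Lem:proj-lax-colax} built from the lax structure of~$i_*$, identifies the Hopf operators of the monoidal adjunction $i^*\dashv i_*$ with $\rproj^1$ and~$\rproj^2$. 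Since $\cat M$ is a Green 2-functor, these maps are invertible by \Cref{Thm:proj-formula} (equivalently, by \Cref{Def:quasi-Green}), so both comonoidal adjunctions are \emph{Hopf} in the sense of~\cite{BLV11}; by the results of that paper this is exactly what makes $i^*i_!$ a Hopf comonoidal monad and $i^*i_*$ a Hopf monoidal comonad, completing \eqref{it:Hopf} and~\eqref{it:co-Hopf}. Part~\eqref{it:Hopf-operators} is then immediate: the displayed equalities are what we have just shown, the two families of Hopf operators are mutually inverse precisely because $\lproj^1=(\rproj^1)^{-1}$ and $\lproj^2=(\rproj^2)^{-1}$ by \Cref{Thm:proj-formula}, and the same holds for the fusion operators $H^\ell,H^r$ of the Hopf (co)monad~$F$ because, by~\cite{BLV11}, these are built functorially out of the Hopf operators of the adjunction (apply $i^*$ and compose with structure maps), so they inherit the mutual-inverse relation.

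I expect the only genuine work to be the bookkeeping: carefully reconciling the conventions of~\cite{BLV11} (the orientation of comonoidal structures, the left- versus right-handed operators and their variances, and especially the passage to opposite categories, which interchanges ``monoidal'' with ``comonoidal'') with those of \Cref{Def:Frobs} and \Cref{Lem:proj-lax-colax}. The one substantive mathematical input --- that the operators in play are invertible --- is handed to us for free by the projection formulas, so no new computation is needed.
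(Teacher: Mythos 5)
Your proposal is correct and follows essentially the same route as the paper: recognize $i_!\dashv i^*$ as a comonoidal adjunction via the colax structure of \Cref{Prop:induced-co-monoid}, identify its Hopf operators with $\lproj^1,\lproj^2$ by inspection of \Cref{Lem:proj-lax-colax}, get invertibility and the mutual-inverse statements from \Cref{Thm:proj-formula}, invoke \cite{BLV11} for the Hopf-monad consequence, and handle \eqref{it:co-Hopf} and the fusion operators by duality and the standard fusion/Hopf-operator relation in \cite[\S2.9]{BLV11}. No gaps; the remaining work is, as you say, bookkeeping of conventions, which is exactly what the paper also leaves implicit.
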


\begin{Rem}
Even more could be said: 
$i_!\dashv i^* \dashv i_*$ is an example of an \emph{ambidextrous biHopf adjunction}, from which it follows \eg that induction $i_!=i_*$ is a \emph{Frobenius monoidal functor}. 
See \cite{Balan17} for explanations on this, and more generally on how (co)Hopf adjunctions are related to various Frobenius-type properties.
\end{Rem}

\begin{proof}[Proof of \Cref{Prop:Hopf-monad}]
The adjunction $i_!\dashv i^*$ is \emph{comonoidal}, meaning that $i_!,i^*$ are comonoidal (\ie colax monoidal) functors and $\leta,\leps$ are comonoidal transformations. Indeed, $i^*$ comes with a strong (hence both lax and colax) monoidal structure, and the colax structure induced on~$i_!$ (as in \Cref{Prop:induced-co-monoid}\,\eqref{it:colax}) turns the unit and counit of adjunction into comonoidal transformations.
It follows that the induced monad $i^*i_!$ on $\cat M(H)$ and, dually, the induced comonad $i_!i^*$ on $\cat M(G)$  (as in \Cref{Prop:Frob-co-monad}) are also comonoidal, \ie their underlying functors and all structural natural transformations are comonoidal.

By definition (\cite[\S2.8]{BLV11}), a comonoidal adjunction is a \emph{Hopf adjunction} if its left and right Hopf operators $\mathbb H^\ell, \mathbb H^r$ are invertible. 
A direct inspection of the definitions together with \Cref{Lem:proj-lax-colax} reveals that, as claimed in part~\eqref{it:Hopf-operators}, $\mathbb H^\ell =  \lproj^2$ and $\mathbb H^r = \lproj^1$ are precisely our two left projection maps, which we know to be invertible thanks to \Cref{Thm:proj-formula}. 
By \cite[Thm.\,2.15]{BLV11}, the comonoidal monad induced by a Hopf adjunction is automatically a Hopf monad in the sense of \cite[\S2.7]{BLV11}, \ie its two fusion operators $H^\ell,H^r$ are invertible.  

All claims in part~\eqref{it:Hopf} are now proved.
Part \eqref{it:co-Hopf} is formally dual, starting with the \emph{(lax) monoidal} adjunction~$i^*\dashv i_*$.
To conclude the proof of part~\eqref{it:Hopf-operators}, it suffices now to recall the simple relation between the fusion operators $\mathbb H^\ell, \mathbb H^r$ and the Hopf operators $H^\ell, H^r$, which we leave to the reader (see \cite[\S2.9]{BLV11}). 
\end{proof}

\begin{proof}[Proof of \Cref{Thm:half-braiding}]
As in \Cref{Prop:Hopf-monad}, we have the Hopf adjunction $i_!\dashv i^*$ with Hopf operators $\mathbb H^r_{X,Y}= \lproj^1_{X,Y}$ and $\mathbb H^\ell_{Y,X}=\lproj^2_{Y,X}$. 
We can therefore apply \cite[Thm.\,6.6]{BLV11} to immediately obtain the following three facts, corresponding to (portions of) the three claims of our theorem:
\begin{enumerate}[\rm(1)]
\item 
The composite natural isomorphism $\sigma := \lproj^1_{X,\Unit} \rproj^2_{\Unit,X} = \mathbb H^r_{X,\Unit} (\mathbb H^\ell_{\Unit, X})^{-1}$ defines a half-braiding on $A= i_!(\Unit)$.
\item 
This braiding turns the comonoid~$A$, defined precisely as in \Cref{Prop:induced-co-monoid}\,\eqref{it:co-monoid-unit}, into a cocommutative comonoid of the center $\cat Z(\cat M(G))$.

\item The natural map $(\mathbb H^\ell_{\Unit,X})^{-1} = \rproj^2_{\Unit, X} \colon A \otimes X \overset{\sim}{\to} i_!i^* (X)$ is an isomorphism of comonoidal monads (in fact, we already know from the proof of \Cref{Thm:Frobenius} that it is an isomorphism of the underlying plain monads).
\end{enumerate}
Taken together, these say that the comonoidal monad $i_!i^*$ is canonically represented by the induced central comonoid~$A$.
There is a formally dual result, stating that the monoid $A=i_*(\Unit)$ is central and represents the monoidal comonad~$i_*i^*$.

This covers parts \eqref{it:hb2} and~\eqref{it:hb3} of the theorem.
To complete the proof of part~\eqref{it:hb1}, suppose now that the Green 2-functor~$\cat M$ is braided. We must check that $\sigma$ agrees with the given braiding on the object~$A$. 
By hypothesis, the restriction functor $i^*$ is braided monoidal; in particular the central square in the following diagram is commutative, where the two braidings are denoted by $\beta_{X,Y}\colon X\otimes Y \overset{\sim}{\to} Y \otimes X$:
\[
\xymatrix{
& A \otimes X \ar[rr]^-{\beta_{A,X}} \ar[d]^{\reta} 
    \ar `l[dl] `[ddd]_{\rproj^2_{\Unit, X}}   [ddd]
     \ar@{}[dddl]_{\textrm{def.}} &&
 X\otimes A  \ar[d]_{\reta}
   \ar `r[dr] `[ddd]^{\rproj^1_{X, \Unit}}   [ddd]
    \ar@{}[dddr]^{\textrm{def.}}  & \\
& i_*i^*(A \otimes X) \ar[rr]^-{i_*i^*(\beta_{A,X})} \ar[d]^{i_*(\otimes_i)}_\simeq &&
 i_*i^*(X \otimes X) \ar[d]_{i_*(\otimes_i)}^\simeq & \\
& i_*(i^*A \otimes i^*X) \ar[d]^{i_*(\reps \,\otimes\, \id)} \ar[rr]^-{i_*(\beta_{i^*A,i^*X})} &&
 i_*(i^*X \otimes i^*A) \ar[d]_{i_*(\id \,\otimes\, \reps)} & \\
& i_*(\Unit \otimes i^*X) \ar[rr]^-\simeq &&
 i_*(i^*X \otimes \Unit) &
}
\]
The top square commutes by the naturality of $\reta$ and the bottom square by the naturality of~$\beta$ combined with the fact that $\beta_{\Unit, i^*X}$ agrees (by a braiding axiom) with the composite $\Unit \otimes i^*X \simeq i^*X \simeq i^*X \otimes \Unit$ of the left and right unitors. It follows that $\beta_{A,X} =  (\rproj^1_{X,\Unit})^{-1} \circ \rproj^2_{\Unit, X}= \lproj^1_{X,\Unit} \circ \rproj^2_{\Unit,X} = \sigma_X$, as claimed.
\end{proof}
\begin{center}$*\;\;*\;\;*$\end{center}

As our Mackey 2-functors are rectified, the unit-counit composite $\reps \circ \leta$ of any $(i\colon H\to G)\in \JJ$ is just the identity of~$\Id_{\cat M(H)}$. As we have seen, under the projection formula this gives rise to the speciality of the Frobenius object~$A(i)$. 
Pleasantly, the \emph{other} unit-counit composite also has a meaning for~$A(i)$:

\begin{Prop} \label{Prop:dim}
The component of the composite $\leps \circ \reta\colon \Id_{\cat M(G)} \Rightarrow \Id_{\cat M(G)}$   at the tensor unit $\Unit$ is the monoidal dimension $\dim A(i) = \mathrm{tr}(\id_{A(i)}) \in \End_{\cat M(G)}(\Unit)$.
\end{Prop}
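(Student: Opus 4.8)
The plan is to deduce the identity from the special Frobenius relation by passing through the self-duality of $A(i)$ recorded in \Cref{Rem:self-duality}. Write $A:=A(i)$. First I would recall that, by the Frobenius structure of \Cref{Thm:Frobenius} together with \Cref{Rem:self-duality}, the object $A$ is its own tensor-dual in $\cat M(G)$, with coevaluation $\alpha=\delta\circ\iota\colon \Unit\to A\otimes A$ and evaluation $\beta=\epsilon\circ\mu\colon A\otimes A\to \Unit$, where $(A,\mu,\iota)$ and $(A,\delta,\epsilon)$ are the canonical monoid and comonoid of \Cref{Prop:induced-co-monoid}\,\eqref{it:co-monoid-unit}. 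Being a \emph{special commutative} Frobenius object in the braided center $\cat Z(\cat M(G))$ (\Cref{Thm:half-braiding}), $A$ is in particular a symmetric self-dual object, so no ``handedness'' ambiguity arises in the trace of $\id_A$, and the monoidal dimension is the composite $\Unit\xrightarrow{\alpha}A\otimes A\xrightarrow{\beta}\Unit$, that is
\[
\dim A(i)\;=\;\mathrm{tr}(\id_A)\;=\;\beta\circ\alpha\;=\;\epsilon\circ\mu\circ\delta\circ\iota\ \in\ \End_{\cat M(G)}(\Unit)\,.
\]

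Second, I would invoke the special relation $\mu\circ\delta=\id_A$ of \Cref{Cor:specialness} (itself a consequence of the rectified relation~\eqref{eq:special-Frob}): it collapses the middle of this composite, leaving $\dim A(i)=\epsilon\circ\iota$.

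Third, I would identify $\epsilon\circ\iota$ with the $\Unit$-component of $\leps\circ\reta$. Unwinding the explicit formulas of \Cref{Prop:induced-co-monoid}\,\eqref{it:co-monoid-unit}, the unit $\iota$ is the composite $\Unit\xrightarrow{\reta}i_*i^*(\Unit)\overset{\sim}{\to}i_*(\Unit_H)=A$ and the counit $\epsilon$ is the composite $A=i_!(\Unit_H)\overset{\sim}{\to}i_!i^*(\Unit)\xrightarrow{\leps}\Unit$, where the unlabelled isomorphisms are $i_*$, respectively $i_!$, applied to the unit isomorphism $i^0\colon \Unit_H\overset{\sim}{\to}i^*(\Unit)$ of the strong monoidal functor $i^*$ (\Cref{Rem:unit-objects}). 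Since the Mackey $2$-functor is rectified, $i_!=i_*$ holds \emph{as functors} (\Cref{Rem:rectification}), hence $i_!(i^0)=i_*(i^0)$; the two isomorphisms occurring in $\epsilon\circ\iota$ are therefore mutually inverse and cancel, so that $\epsilon\circ\iota=\leps_{\Unit}\circ\reta_{\Unit}=(\leps\circ\reta)_{\Unit}$. Chaining the three steps yields the assertion.

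The main obstacle is the bookkeeping in the third step: one must be sure the coherence isomorphisms coming from the strong monoidal structure of $i^*$ genuinely cancel, which is exactly what the rectification hypothesis (equality $i_!=i_*$ on the nose, not merely up to isomorphism) provides. A secondary point to check carefully is the first step's claim that $\dim A(i)$ is computed by $\beta\circ\alpha$ with the \emph{particular} duality data of \Cref{Rem:self-duality} --- i.e. that the two a~priori possible traces of $\id_A$ agree --- which follows from the commutativity of $A$ in the center established in \Cref{Thm:half-braiding}; the remaining verifications are routine.
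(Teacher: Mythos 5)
Your proposal is correct and follows essentially the same route as the paper's proof: compute $\dim A(i)$ as $\beta\circ\alpha$ for the self-duality $(\alpha,\beta)=(\delta\iota,\epsilon\mu)$ of \Cref{Rem:self-duality}, using the (co)commutativity of $A(i)$ in the center (\Cref{Thm:half-braiding}) to dispose of any braiding, then collapse $\mu\circ\delta=\id$ by speciality, and finally identify $\epsilon\circ\iota$ with $(\leps\circ\reta)_{\Unit}$ by unwinding \Cref{Prop:induced-co-monoid}\,\eqref{it:co-monoid-unit} and using that $i_!=i_*$ on the nose. The bookkeeping in your third step is exactly what the paper's final diagram encodes, so there is nothing to add.
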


\begin{proof}
The Frobenius object $A= A(i)$ admits the self-duality $(\alpha,\beta)$ of \Cref{Rem:self-duality},
hence its dimension (a.k.a.\ Euler characteristic) may be computed as the composite $\beta \circ \mathrm{braid} \circ \alpha \in \End(\Unit)$. 
Here any braiding ``$\mathrm{braid}$'' would do, as it would coincide with the canonical half-braiding by \Cref{Thm:half-braiding}\,\eqref{it:hb1}. 
In fact we can omit the braiding altogether, since $\beta = \epsilon \circ \mu$ and $\mu$ is always commutative by part~\eqref{it:hb2} (or dually, since $\alpha = \delta \circ \iota $ and $\delta$ is cocommutative).
The following commutative diagram
\[
\xymatrix@R=12pt{
& & & & \\
\Unit
 \ar@/_2ex/[ddr]_{\reta}
 \ar[dr]^\iota
 \ar[rr]^-\alpha
 \ar@/^6ex/[rrrr]^-{\dim A} &&  
 A \otimes A
  \ar[dr]_\mu
  \ar[rr]^-{\beta}  &&  \Unit \\
& 
A
 \ar[ur]_\delta
 \ar@/_2ex/@{=}[rr]  && 
A
 \ar[ur]^\epsilon & \\
& 
i_*i^*(\Unit )
 \ar@/_2ex/@{=}[rr]
 \ar[u]_\simeq && 
i_!i^*(\Unit)
 \ar[u]^\simeq
 \ar@/_2ex/[uur]_{\leps} &
}
\vspace{.2cm}
\]
which also makes use of the speciality of $A$ and the definition of its unit $\iota$ and counit~$\epsilon$, proves the claimed identity.
\end{proof}

\begin{Rem} \label{Rem:generalized-index}
(Cf.\ \cite[Rem.\,5.9]{BalmerDellAmbrogio21pp}.)
The object~$A(i)$ can be viewed as both a categorification and a generalization of the index of $H$ in~$G$, for \emph{two} different reasons. 
To understand this, let $\cat M$ be a Green 2-functor for $(\gpd;\gpdf)$, say.
As we shall see (\Cref{Thm:Green-K-decat}), by applying the $\mathrm K_0$-functor to $\cat M$ we get an ordinary Green functor~$M$ (global and with inflation maps; see \Cref{sec:1-Green}).
For any subgroup inclusion $i\colon H \hookrightarrow G$, the projection formula $i_*i^* \cong A(i) \otimes - $ immediately yields the identity $i_\sbull i^\sbull = [A(i)] \cdot \id_{M(G)}$, where $i_\sbull$ and $i^\sbull$ are the induction and restriction maps $M(H)\leftrightarrows M(G)$ and where $[A(i)]$ is the class of $A(i)$ in $M(G)=\mathrm K_0(\cat M(G))$.
Recall that classically a Mackey functor is said to be \emph{cohomological} iff $i_\sbull i^\sbull$ is multiplication by the index~$[G:H]$ for all such~$i$; hence $[A(i)]$ plays the role of the index, even when $M$ is not cohomological.
But there is a second reason: $\cat M$ itself, as a Mackey 2-functor, is \emph{cohomological} in the sense of \cite{BalmerDellAmbrogio21pp} iff the composite 
$\leps \circ \reta\colon \colon \Id_{\cat M(G)}\Rightarrow \Id_{\cat M(G)}$ is multiplication by $[G:H]$ for all such~$i$. (Cohomological Mackey 2-functors arise in linear representation theory and geometry and include the Green 2-functors of Examples~\ref{Exa:rep-theory}, \ref{Exa:quots} and~\ref{Exa:sheaves}.)
By \Cref{Prop:dim}, the isomorphism $i_*i^* \cong A(i) \otimes - $ identifies $\leps \circ \reta$ with multiplication by $\dim A(i)$; hence $\dim A(i)$ plays here the role of the index, even when $\cat M$ is not cohomological.
\end{Rem}

\section{Tensor monadicity}
\label{sec:monadicity}%

We now revisit the results of \cite{BalmerDellAmbrogioSanders15} in the context of Green 2-functors. 
In a nutshell, the projection formulas let us reformulate the (co)monadicity of the underlying Mackey 2-functors in terms of the tensor structure. 
\Cref{Thm:separable-monoidicity} provides a conceptual explanation for, and an improvement of, the various results in \cite{BalmerDellAmbrogioSanders15}, whose proofs at the time were rather \emph{ad~hoc} and dependent on each example.

\begin{Not} \label{Not:EM}
If $A=(A,\mu,\iota)$ is a monoid in a monoidal category $\cat C$, we write $A\MMod_\cat C$ for the category of (associative unital left) modules $(X, \rho\colon A\otimes X\to X)$ in~$\cat C$. The evident forgetful functor $A\MMod_\cat C\to \cat C$ admits a left adjoint functor, sending each $X\in \cat C$ to the \emph{free $A$-module} $(A\otimes X, \mu\otimes \id_X\colon A\otimes A \otimes X\to A\otimes X)$ over~$X$. Dually, for every comonoid $(B,\delta,\epsilon)$ in~$\cat C$ we have the category $B\CComod_\cat C$ of (left) comodules in~$\cat C$, and the forgetful functor $B\CComod_\cat C\to \cat C$ has a right adjoint sending $X\in \cat C$ to the \emph{cofree comodule} $(B\otimes X, \delta \otimes \id\colon B\otimes X \to B\otimes B\otimes X)$. 
\end{Not}

\begin{Thm} \label{Thm:separable-monoidicity}
Let $\cat M$ be a Green 2-functor taking values in idempotent complete additive categories. 
Let $i\colon H\to G$ be in~$\JJ$, and let $A(i) \in \cat M(G)$ be the induced Frobenius object (\Cref{Thm:Frobenius}).
Then $\cat M(H)$ is canonically equivalent to both the category of $A(i)$-comodules and that of $A(i)$-modules in~$\cat M(G)$ (\Cref{Not:EM}):
\[
\xymatrix@R=14pt{
& 
\cat M(G) 
 \ar[dd]|{\, i^*}
 \ar[ddl]|{\Cofree}
 \ar[ddr]|{\Free} & \\
 && \\
A(i) \CComod_{\cat M(G)}
 \ar@/^2ex/[uur]^-{\forget}
 \ar@/_2ex/@{..>}[uur] &
\cat M(H)
 \ar@/^2ex/[uu]^(.4){i_!}
 \ar@/_2ex/[uu]_(.4){i_*}
 \ar[l]^-{\sim}
 \ar[r]_-{\sim} & 
A(i)\MMod_{\cat M(G)}
 \ar@/_2ex/[uul]_{\forget}
 \ar@{..>}@/^2ex/[uul]
}
\]
These equivalences identify, respectively, the adjunction $i_!\dashv i^*$ with the forgetful-cofree adjunction for comodules, and the adjunction $i^*\dashv i_*$ with the free-forgetful adjunction for modules.
If the Green 2-functor $\cat M$ is braided, both are equivalences of braided monoidal categories, where modules and comodules are endowed with the usual tensor product of $\cat M(G)$ amalgamated over~$A(i)$.
\end{Thm}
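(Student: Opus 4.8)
The strategy is to reduce everything to standard (co)monadicity for the (co)monad $\bbM(i)=i_*i^*=i_!i^*$ on $\cat M(G)$ and then transport the monadic structure across the projection isomorphism $\pi_X\colon A(i)\otimes X\isoto \bbM(i)(X)$ of \eqref{eq:proj-monads}. The key input from the underlying Mackey 2-functor is its separable monadicity: by \cite[\S2.4]{BalmerDellAmbrogio20}, because $\cat M$ is rectified (so $\reps\circ\leta=\id$, i.e. the special Frobenius relation \eqref{eq:special-Frob} holds) and each $\cat M(G)$ is idempotent complete, the comparison functor $\cat M(H)\to \bbM(i)\text{-}\Mod_{\cat M(G)}$ induced by $i^*\dashv i_*$ is an equivalence, and dually the comparison $\cat M(H)\to\bbM(i)\text{-}\Comod_{\cat M(G)}$ induced by $i_!\dashv i^*$ is an equivalence (here $\bbM(i)$ is a Frobenius monad by \Cref{Prop:Frob-co-monad}\,\eqref{it:adj-Frob}, and idempotent completeness is exactly what is needed to split the relevant idempotents). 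So the first step is simply to invoke this.

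The second step is the transport. By \Cref{Prop:Hopf-monad}\,\eqref{it:Hopf-operators} (or directly by \Cref{Thm:half-braiding}\,\eqref{it:hb3}), $\pi=\rproj^2_{\Unit,-}$ is an isomorphism of monads between $A(i)\otimes(-)$ (with structure maps $\mu\otimes\id$, $\iota\otimes\id$ coming from the monoid $A(i)$ of \Cref{Prop:induced-co-monoid}) and $\bbM(i)$; and $\pi^{-1}=\lproj^2_{\Unit,-}$ is an isomorphism of comonads between $\bbM(i)$ and $A(i)\otimes(-)$ with its comonoid structure. An isomorphism of monads induces an isomorphism of Eilenberg--Moore categories $\bbM(i)\text{-}\Mod_{\cat M(G)}\cong A(i)\text{-}\Mod_{\cat M(G)}$ commuting with the forgetful functors and sending free to free; dually for comodules. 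Composing with the equivalences of Step 1 gives the two claimed equivalences $\cat M(H)\simeq A(i)\text{-}\Comod_{\cat M(G)}$ and $\cat M(H)\simeq A(i)\text{-}\Mod_{\cat M(G)}$, and by construction they identify $i_!\dashv i^*$ with forgetful-cofree and $i^*\dashv i_*$ with free-forgetful. (The dotted arrows in the diagram, the ``wrong-way'' adjoints of the forgetful functors of $A(i)$-(co)modules, exist precisely because $A(i)$ is a \emph{special} Frobenius algebra, hence separable: for a separable algebra the forgetful functor from modules is also a \emph{right} adjoint, and dually.)

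The third step handles the braided monoidal refinement. Assume $\cat M$ braided, so each $\cat M(G)$ is braided monoidal and $i^*$ is braided strong monoidal. First, $A(i)$ is a commutative Frobenius algebra in $\cat M(G)$ by \Cref{Thm:Frobenius} (equivalently by \Cref{Thm:half-braiding}\,\eqref{it:hb2}), so the categories $A(i)\text{-}\Mod_{\cat M(G)}$ and $A(i)\text{-}\Comod_{\cat M(G)}$ carry their usual braided monoidal structures with tensor product $\otimes_{A(i)}$ (the coequalizer/equalizer exists since idempotents split and, for a separable algebra, the relevant (co)equalizers are split). It remains to promote the equivalence $i^*\colon$ ``$\cat M(H)\to A(i)\text{-}\Mod$'' to a braided monoidal equivalence. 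The natural candidate for the monoidal structure on the comparison functor is the oplax structure of $i^*$ itself: since $i^*$ is \emph{strong} monoidal, $i^*(Y_1\otimes_H Y_2)\cong i^*Y_1\otimes_G i^*Y_2$, and one checks this isomorphism is $A(i)$-bilinear, hence descends to an isomorphism $i^*(Y_1)\otimes_{A(i)}i^*(Y_2)\cong i^*(Y_1\otimes_H Y_2)$; compatibility with associators, unitors and braidings follows from the corresponding compatibilities of the strong braided structure of $i^*$. Dually for comodules, using the colax (= strong) structure of $i_!$ together with \Cref{Prop:Hopf-monad}.

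**Main obstacle.** The genuinely nontrivial point is not Step 1 or the plain-category transport of Step 2 (these are formal once separable monadicity of the Mackey 2-functor is granted), but rather verifying in Step 3 that the comparison equivalence is \emph{monoidal} for $\otimes_{A(i)}$ — concretely, that the strong monoidal structure isomorphism of $i^*$ is compatible with the $A(i)$-actions in the precise sense needed to induce a well-defined map on $\otimes_{A(i)}$, and that this is coherent and braided. This requires carefully identifying the $A(i)$-action on $i^*(Y)$ (it is $A(i)\otimes i^*Y\xrightarrow{\pi}i_*i^*i^*Y=\bbM(i)i^*Y\to i^*Y$ via the monad structure, i.e. $\mu$-action transported along $\pi$) and tracing through the definition of the relative tensor product; it is a diagram chase using the strong monoidal structure of $i^*$, the definition of $\pi$ from $\rproj^2$ (\Cref{Lem:proj-lax-colax}), and the Frobenius relation. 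I would either carry out this chase explicitly or, more efficiently, cite the corresponding statement for Hopf monads, e.g. the reconstruction results of \cite{BLV11}, which identify the Eilenberg--Moore category of a Hopf (co)monad with (co)modules over the ``(co)induced'' (co)monoid as a monoidal category; given \Cref{Prop:Hopf-monad} this applies verbatim and yields the braided monoidal equivalences with no further work.
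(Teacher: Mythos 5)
Your Steps 1 and 2 coincide with the paper's argument: one first invokes the (co)monadicity of the rectified Mackey 2-functor (\cite[Thm.~2.4.1]{BalmerDellAmbrogio20}, applied also to the dual Mackey 2-functor $\cat M^\op$ for the comodule side) to get $\bbM\CComod_{\cat M(G)}\simeq \cat M(H)\simeq \bbM\MMod_{\cat M(G)}$ for $\bbM=i_!i^*=i_*i^*$, identifying the two adjunctions as claimed, and then transports along the isomorphism of monads and of comonads $\pi=\rproj^2_{\Unit,-}\colon A(i)\otimes(-)\overset{\sim}{\Rightarrow}\bbM$ coming from the projection formula. Up to that point the proposal is correct and follows the same route as the paper.

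The braided monoidal refinement is where there is a genuine gap. The equivalence $\cat M(H)\overset{\sim}{\to}A(i)\MMod_{\cat M(G)}$ has underlying functor $i_*$ (it sends $Y$ to $(i_*Y,\,i_*\reps_Y)$ transported along $\pi$); it is not ``$i^*$'', and your formula $i^*(Y_1\otimes_H Y_2)\cong i^*Y_1\otimes_G i^*Y_2$ does not even typecheck for $Y_1,Y_2\in\cat M(H)$. Consequently the monoidal structure on the comparison functor cannot be borrowed from the strong monoidality of $i^*$: what is available is only the \emph{lax} structure $\lambda_{Y_1,Y_2}\colon i_*Y_1\otimes_G i_*Y_2\to i_*(Y_1\otimes_H Y_2)$ of \Cref{Prop:induced-co-monoid}, which is not invertible in $\cat M(G)$. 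The paper's proof checks that $\lambda$ (and the unit map $\iota$) is $A(i)$-balanced, hence factors through the split coequalizer defining $i_*Y_1\otimes_{A(i)}i_*Y_2$, and then that the induced map $\overline{\lambda}$ \emph{is} invertible; that invertibility is the real content (it rests on the projection formula, e.g.\ $i_*Y_1\otimes_G i_*Y_2\cong i_*(Y_1\otimes_H i^*i_*Y_2)$), and it is not a formal consequence of $i^*$ being strong monoidal. Your fallback of citing \cite{BLV11} does not settle this ``verbatim'' either: Hopf-monad/bimonad theory endows the Eilenberg--Moore category with the monoidal structure for which the \emph{forgetful} functor to $\cat M(G)$ is monoidal, and that structure is not $\otimes_{A(i)}$; the equivalence with $\cat M(H)$ is not monoidal for it, since $i_*(Y_1\otimes_H Y_2)\not\cong i_*Y_1\otimes_G i_*Y_2$ in general. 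So the identification of $\otimes_H$ with the $A(i)$-amalgamated tensor product under the comparison functor still has to be proved, and this is precisely the descent-of-the-lax-structure verification carried out in the paper.
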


A similar result holds for right modules and right comodules.

\begin{proof}
Leaving aside the tensor structures, we already get canonical equivalences
\begin{equation} \label{eq:co-monadic-version}
\xymatrix@R=2pt{
\bbM \CComod_{\cat M(G)} & \cat M(H) \ar[l]_-{\sim} \ar[r]^-{\sim} & \bbM\MMod_{\cat M(G)} \\
(i_!X, i_! \leta) & X \ar@{|->}[r] \ar@{|->}[l] & (i_*X, i_* \reps_X)
}
\end{equation}
with the Eilenberg--Moore categories of comodules over the comonad $\bbM = i_!i^*$ and of modules over the monad $\bbM= i_*i^*$, simply because $\cat M$ is a Mackey 2-functor. 
Indeed, the right-hand side equivalence is precisely \cite[Theorem~2.4.1]{BalmerDellAmbrogio20} (there only formulated for $\GG = \gpd$, but the proof works verbatim in general). 
The left-hand side one is obtained by applying the same result to the dual Mackey 2-functor $\cat M^\op$ of \cite[Remark~1.1.8]{BalmerDellAmbrogio20}, whose values are the opposite categories $\cat M(G)^\op$ and where the left and right adjunctions of $\cat M$ exchange their roles.
These equivalences identify the (co)free-forgetful adjunctions with $i_!\dashv i^*$ and $i^*\dashv i_*$, respectively.

Let us now reintroduce the tensors. 
Recall the isomorphism
$
\pi \colon A(i)\otimes (-) \overset{\sim}{\to} \bbM
$
of monads obtained in~\eqref{eq:proj-monads} from the projection formula (see also \Cref{Thm:half-braiding}\,\eqref{it:hb3}). 
It induces by precomposition an isomorphism of the respective module categories
 \[
\bbM\MMod_{\cat M(G)} \overset{\sim}{\longrightarrow} A(i)\MMod_{\cat M(G)}
 \]
sending $(X,\rho\colon \bbM X\to X)$ to $(X, \rho\pi_X\colon A(i)\otimes X\to X)$.
Dually, composition with $\pi$ induces an isomorphism
 \[
A(i)\CComod_{\cat M(G)} \overset{\sim}{\longrightarrow} \bbM\CComod_{\cat M(G)}  \,.
 \]
Combined with \eqref{eq:co-monadic-version}, this provides the two claimed equivalences of categories.

Suppose now that $\cat M$ is braided, so that the tensor categories $\cat M(G)$ and $\cat M(H)$ are braided and the strong tensor functor $i^*$ preserves the braiding. Moreover, the monoid $A(i)$ is automatically commutative. 
As $\cat M(G)$ is idempotent complete, the module category $A(i)\MMod_{\cat M(G)}$ is also idempotent complete and therefore can be equipped with the tensor product of modules over~$A(i)$, since the latter is defined by a split coequalizer (see \eg \cite[\S3]{DellAmbrogioHuglo21}). Dually, the tensor product of $A(i)$-comodules is defined as a certain split equalizer. 
We must check that the two equivalences are braided monoidal. 
Briefly: For the module case, recall (\Cref{Prop:induced-co-monoid}\,\eqref{it:lax}) that the right adjoint $i_*$ inherits a lax monoidal structure $\lambda_{X,Y}\colon i_*X \otimes i_*Y \to i_*(X \otimes Y)$ and $\iota= \eta_\Unit \colon \Unit \to i_*\Unit = A(i)$.
One can check that this structure transfers further onto the comparison functor~$E\colon \cat M(H)\overset{\sim}{\to}A(i)\MMod$ by taking the unique factorizations of $\lambda_{X,Y}$ and $\iota$ as $A(i)$-linear morphisms:
\[
\xymatrix{
EX \otimes EY \ar@{=}[r] \ar@{->>}[d] & i_* X \otimes i_* Y \ar[r]^-{\lambda_{X,Y}} & i_*(X \otimes Y) \\
EX \otimes_{A(i)} EY \ar@{-->}[rr]_{\overline{\lambda}_{X,Y}}& & E(X \otimes Y) \ar@{=}[u]
}
\quad\quad 
\xymatrix{
\Unit \ar[d]_{\eta_\Unit} \ar[r]^-{\iota} & A(i) \\
A(i) \ar@{-->}[ur]_{\overline{\iota}\,=\,\id} &
}
\]
As the induced maps $\overline{\lambda}_{X,Y}$ and $\overline{\iota}$ are isomorphisms, this is actually a strong monoidal structure on~$E$. It is clearly compatible with the braidings.

The case of comodules is dual and left to the reader.
\end{proof}

\begin{Rem} \label{Rem:sep-generality}
In \cite{BalmerDellAmbrogioSanders15} we considered families of categories $\cat M(G)$ for quite general topological groups~$G$, with precise hypotheses depending on the various examples; the tensor monadicity result, however, was (and can be) proved only for subgroups $H\leq G$ with finite discrete quotient~$G/H$. 
Our formalism of Mackey and Green 2-functors is only meant to treat finite groups, but can easily handle 2-categories of topological groupoids in order to capture the full generality of these examples, as long as one finds suitable spannable pairs~$(\GG;\JJ)$; we leave the latter task to interested readers.
However, we feel that  a better generalization to topological groupoids shouldn't be such an ``obvious'' one but rather one which also usefully describes (by new axioms) the observed behavior of subgroups with \emph{non-finite} quotient (\eg phenomena such as the Wirthm\"uller isomorphisms of \cite{BalmerDellAmbrogioSanders16} for the cases when $\omega_f\not\simeq \Unit$). This is an interesting problem that remains to be solved.
\end{Rem}

\section{Examples of Green 2-functors}
\label{sec:exa}%

Many of the examples of Mackey 2-functors treated in \cite[Ch.\,4]{BalmerDellAmbrogio20} are in fact symmetric Green 2-functors. Let us stress that, in all cases, the multiplicative structure is well-known, as indeed was the case for the underlying structure of Mackey 2-functor: 
Our present contribution consists in providing easily verifiable yet useful axioms for it. We now turn in this section to the ``easily verifiable'' part. 
For simplicitly, we take $\kk =\bbZ$ and leave to the reader the straightforward adjustments needed to work over other rings~$\kk$, when applicable.

A first vast family of examples comes from derivators.
By an \emph{additive monoidal derivator} we mean a monoidal derivator (\cite[\S7]{GPS14}) whose underlying derivator is additive (\ie it lands in $\ADD$ as a 2-functor on~$\Cat^\op$) and such that each  (internal or external) component of the tensor product is an additive functor of both variables.

\begin{Thm} [Additive monoidal derivators] \label{Thm:mon-der}
If $\cat D\colon \Cat^\op\to \ADD$ is an additive monoidal derivator, its restriction $\cat M:=\cat D|_{\gpd}$ is a Green 2-functor defined on $(\GG;\JJ)=(\gpd; \groupoidf)$.
If $\cat D$ is braided or symmetric then so is the Green 2-functor~$\cat M$.
Similarly, if $\cat E$ is an additive  left or right $\cat D$-module, its restriction $\cat N:=\cat E|_{\gpd}$ is a left or right $\cat M$-module.
\end{Thm}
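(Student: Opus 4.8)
The plan is to obtain everything by restriction from $\Cat$ to $\gpd$, reducing the multiplicative axiom --- via \Cref{Def:2Green} and \Cref{Prop:bimorphism} --- to a cocontinuity property of the derivator's external product. First I would recall that $\cat M=\cat D|_{\gpd}$ is a Mackey 2-functor for the Cartesian pair $(\gpd;\groupoidf)$ (cf.\ \Cref{Hyp:prods}); this is essentially the corresponding entry in the catalogue of \cite[Ch.\,4]{BalmerDellAmbrogio20}, so only the salient points need checking. Additivity holds because $\cat D$ is additive and carries finite coproducts of categories to products, i.e.\ to direct sums. For a faithful functor $i\colon H\to G$ of finite groupoids, homotopy left and right Kan extension along $i$ supply a left adjoint $i_!$ and a right adjoint $i_*$ of $i^*=\cat D(i)$ (Induction), and these agree up to isomorphism (Ambidexterity). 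The Mackey formula holds because iso-comma squares of finite groupoids, being comma squares, are homotopy exact in $\cat D$, and $\groupoidf$ is stable under iso-comma (\Cref{Hyp:spannable-pair}). Finally I would invoke the Rectification Theorem (\Cref{Rem:rectification}, admissible here by \Cref{Hyp:rectification}) in order to take $\cat M$ rectified.

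Next, the pseudomonoid structure. An additive monoidal derivator $\cat D$ carries an external product: a pseudonatural family of functors $\cat D(A)\times\cat D(B)\to\cat D(A\times B)$, each $\bbZ$-bilinear by the additivity hypothesis on $\cat D$, together with unit, associativity and unitality constraints obeying the coherence axioms of a pseudomonoid; these restrict verbatim along $\gpd\hookrightarrow\Cat$. Composing with the universal bilinear functors and invoking the universal property of the additive tensor product (\Cref{Rem:UP-tensors}) promotes this to an external pairing $\boxdot^{\cat M}\colon\cat M\boxtimes\cat M\to\cat M(-\times-)$, which by \Cref{Prop:pairings} corresponds to an internal multiplication $\odot^{\cat M}\colon\cat M\otimes\cat M\to\cat M$; together with the restricted constraints this makes $(\cat M,\odot^{\cat M},\Unit^{\cat M})$ a pseudomonoid in $\2Fun(\gpd^{\op},\ADD)$ (\Cref{Def:pseudo-mon}). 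A braiding, resp.\ a symmetry, on $\cat D$ restricts to one on $\cat M$.

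It then remains, by \Cref{Def:2Green}, to check that $\boxdot^{\cat M}$ is a $\JJ$-bimorphism, and by \Cref{Prop:bimorphism} it suffices to verify condition \eqref{it:left-special} (left mates invertible, for pairs with one factor an identity): for $i\colon H\to G$ in $\groupoidf$ and a fixed finite groupoid $K$, the left mate of the 2-cell component $\boxdot^{\cat M}_{i,\Id_K}$ (and symmetrically that of $\boxdot^{\cat M}_{\Id_K,i}$) is invertible. Unwinding the mate calculus, that 2-cell is essentially the pseudonaturality isomorphism $(i\times\Id_K)^*(X\boxtimes Y)\cong(i^*X)\boxtimes Y$, and its left mate is the canonical comparison between $i_!(X)\boxtimes Y$ and $(i\times\Id_K)_!(X\boxtimes Y)$; so the required invertibility is precisely the statement that the external product of $\cat D$ is cocontinuous, hence preserves homotopy left Kan extensions, in each variable separately --- a basic property of (additive) monoidal derivators, cf.\ \cite[\S7]{GPS14}. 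This proves $\cat M$ is a Green 2-functor, braided or symmetric accordingly. Let me stress that one should \emph{not} instead try to verify the projection formulas directly: for monoidal derivators these fail in general (\Cref{Rem:mon-der-proj}) and hold here only a~posteriori, through \Cref{Thm:proj-formula}, because the square of \Cref{Lem:Mackey-for-proj} degenerates to a comma square exactly when $H$ and $G$ are groupoids.

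The module statement is handled the same way. For an additive left (resp.\ right) $\cat D$-module derivator $\cat E$, the argument of the first paragraph shows that $\cat N:=\cat E|_{\gpd}$ is a Mackey 2-functor, and the external action $\cat D(A)\times\cat E(B)\to\cat E(A\times B)$ --- bilinear, pseudonatural, coherently associative and unital, and cocontinuous in each variable as part of the module structure --- restricts and factors through the additive tensor product to a left (resp.\ right) pseudoaction $\cat M\otimes\cat N\to\cat N$ whose associated external pairing is a $\JJ$-bimorphism by the cocontinuity argument above; hence $\cat N$ is a left (resp.\ right) $\cat M$-module in the sense of \Cref{Def:actions}. I expect the main obstacle to be the bookkeeping in the third step: correctly identifying the abstract 2-categorical mate of \Cref{Prop:bimorphism} --- taken \emph{after} replacing the bilinear external functor by its factorization through $\cat M(A)\otimes\cat M(B)$ --- with the derivator's own comparison map for homotopy Kan extensions, so that the cocontinuity of the external product can be brought to bear.
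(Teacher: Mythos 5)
Your proposal is correct and follows essentially the same route as the paper's proof: cite \cite[Thm.\,4.1.1]{BalmerDellAmbrogio20} for the underlying Mackey 2-functor, restrict the pseudomonoid structure along $\gpd\hookrightarrow\Cat$ and factor the biadditive multiplication through the additive tensor product via its universal property, and identify the cocontinuity of the derivator's external product in each variable with the invertibility of the left mates in \Cref{Prop:bimorphism}. Your use of condition \eqref{it:left-special} instead of \eqref{it:left-mor}, and of \Cref{Prop:pairings} to pass between external and internal pairings, are only cosmetic variations on the argument given in the paper.
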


\begin{proof}
Since $\cat D$ is in particular an additive derivator,  by \cite[Theorem~4.1.1]{BalmerDellAmbrogio20} its restriction $\cat M$ to finite groupoids is a Mackey 2-functor for $(\gpd; \groupoidf)$.
By definition, the multiplicative structure on the derivator $\cat D$ consists of a pseudomonoid structure on $\cat D$ seen as an object of the Cartesian 2-category $\2Fun(\Cat^\op, \ADD)$ of additive prederivators, such that its multiplication morphism is cocontinuous in both variables (\cite[Def.\,7.15]{GPS14}). 
Restricting along the inclusion $\gpd\hookrightarrow \Cat$, $\cat M $ is a pseudomonoid in the 2-category $\2Fun(\gpd^\op,\ADD)$ equipped with the \emph{Cartesian} tensor structure. 
However, being biadditive, the multiplication $\cat M\times \cat M\to \cat M$ extends along the (termwise) universal biadditive $\cat M \times \cat M \to \cat M \otimes \cat M$, where $\otimes$ is the diagonal tensor product of additive categories (\Cref{Exa:additive-tensor} and \Cref{Prop:tensor-pre-Green}).
The cocontinuity condition on~$\cat D$ specializes to the condition in \Cref{Prop:bimorphism}\,\eqref{it:left-mor}, so that we indeed obtain a Green 2-functor~$\cat M$. 
The discussion of braidings, symmetries and modules is similarly straightforward and left to the reader.
\end{proof}

\begin{Exa}[Model categories]
\label{Exa:models}
Let $\cat C$ be a monoidal model category in the sense of~\cite{Hovey99}.
Suppose the homotopy category $\Ho(\cat C)$ is additive, for instance because the model structure is stable. 
Then there is an additive monoidal derivator $\cat D$ whose value $\cat D(J)$ is the homotopy category of $J$-shaped diagrams for the levelwise weak equivalences (\cite[Ex.\,7.19]{GPS14}). 
Hence \Cref{Thm:mon-der} yields a Green 2-functor $\cat M = \cat D|_{\gpd}$ for $(\gpd; \groupoidf)$.
If the monoidal model category $\cat C$ is braided or symmetric then so is the Green 2-functor~$\cat M$.
\end{Exa}

Among the many concrete examples of Green 2-functors that can be derived this way, we should at least mention the following ones, for which there are well-known symmetric monoidal stable Quillen model structures getting the job done.

\begin{Exa} [Linear representations] 
\label{Exa:rep-theory}
The usual Mackey 2-functors of linear representations over a field (or commutative ring)~$\kk$ are symmetric Green 2-functors thanks to the tensor product over $\kk$ equipped with diagonal $G$-action. 
Here $\cat M(G)$ could be \eg the abelian category $\kk G\MMod$ of all representations (\cite[Ex.\,4.1.4]{BalmerDellAmbrogio20}), or its derived category $\mathrm D(\kk G)$ (\cite[Ex.\,4.1.5]{BalmerDellAmbrogio20}).
We can obtain more examples of Green 2-functors by restricting attention to full subcategories of the previous ones, as long as they are stable under restrictions, inductions and tensor products (which may depend on~$\kk$). For exemple: the category $\fgmod(\kk G)$ of finitely generated representations, or $\mathrm{perm}_\kk(G)$ of finitely generated permutation representations, or $\mathrm{proj}(\kk G)$ of finite projective modules, or the idempotent completion $\mathrm{perm}_\kk(G)^\natural$ (that is the category of trivial source modules), or categories of bounded complexes, etc. 
If $\kk$ is not a field, we may want to consider the subcategory $\cat M(G)=\latt_\kk(G)$ of lattices, \ie those $\kk G$-modules which are finitely generated projective over~$\kk$.
\end{Exa}

\begin{Exa}[Group ring spectra]
Let $\mathrm{Mod}_R$ be a symmetric monoidal stable model category of modules over some commutative ring spectrum~$R$ (for any of the highly structured meanings of ``commutative ring spectrum''). 
By specializing \Cref{Exa:models}, we get a Green 2-functor $\cat M$ such that $\cat M(G)= \Ho((\mathrm{Mod}_R)^G)= \Ho(\mathrm{Mod}_{RG})$ is the homotopy category of left modules over the group ring spectrum~$RG$. 
When $R=  H\kk$ is the Eilenberg--Mac\,Lane spectrum of a (discrete) field~$\kk$, we get back the derived category $\Der(\kk G)$ of \Cref{Exa:rep-theory}. For $R= \mathbb S$ the sphere spectrum, we get the ``very na\"ive'' $G$-spectra, \ie $G$-diagrams of spectra.
\end{Exa}

We now turn to the many Green 2-functors not arising this way from derivators. 
In some situations, we can take quotients of Green 2-functors:

\begin{Prop} \label{Prop:proj-formula-quot}
Let $\cat M$ be a Green 2-functor on any spannable pair $(\GG;\JJ)$ (\Cref{Def:quasi-Green}). 
Let $\cat N\subseteq \cat M$ be a Mackey sub-2-functor, \ie a collection of full additive subcategories $\cat N(G)\subseteq \cat M(G)$ stable under restriction and induction,
and suppose each $\cat N(G)$ is a two-sided tensor ideal in the monoidal category~$\cat M(G)$.
Then the additive quotient categories $\cat Q(G):= \cat M(G)/\cat N(G)$ inherit a unique structure of Green 2-functor~$\cat Q$ on $(\GG; \JJ)$ such that the quotient functors $\cat M(G)\to \cat Q(G)$ are monoidal.
If $\cat M$ is braided or symmetric than so is~$\cat Q$. 
\end{Prop}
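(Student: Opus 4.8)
The plan is to verify the projection-formula characterization of Green 2-functors given in \Cref{Def:quasi-Green}, which is the right tool here because it makes sense for an arbitrary (not necessarily Cartesian) spannable pair, matching the hypothesis. Write $q_G\colon \cat M(G)\to \cat Q(G)=\cat M(G)/\cat N(G)$ for the quotient functors; each $q_G$ is $\kk$-linear, full and essentially surjective, and annihilates exactly the morphisms factoring through an object of~$\cat N(G)$. The entire argument consists in transporting the structure of~$\cat M$ forward along the~$q_G$ and observing (i) that the relevant universal properties force the transported structure to be unique, and (ii) that the quotient $2$-functors preserve composition and invertibility of $2$-cells.

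First I would assemble $\cat Q$ as a rectified Mackey 2-functor. Since $\cat N$ is stable under restriction, for every $1$-cell $u$ the composite $q_H\circ u^*\colon \cat M(G)\to \cat Q(H)$ kills $\cat N(G)$ and hence factors uniquely as $\bar{u}^*\circ q_G$; the uniqueness of these factorizations makes $\cat Q$ a (strict) 2-functor $\GG^\op\to \ADDick$ and $q\colon \cat M\to \cat Q$ a pseudonatural transformation. (When the ambient target is $\ADDic_\kk$ one composes further with the idempotent completion $(-)^\natural$, a symmetric monoidal reflection by \Cref{Prop:tensor-ADD}, which preserves all the structure below.) Because $\cat N$ is also stable under induction, the functors $i_!=i_*$ and the four (co)units $\leta,\leps,\reta,\reps$ descend uniquely, yielding adjunctions $\bar{i}_!\dashv \bar{i}^*\dashv \bar{i}_*$ with $\bar{i}_!=\bar{i}_*$. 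Additivity of $\cat Q$ is immediate; the Mackey-formula mates for $\cat Q$ and the rectification relations \eqref{eq:strict-MF} and \eqref{eq:special-Frob} for $\cat Q$ are obtained by applying $q$ to the corresponding data of~$\cat M$, hence remain invertible, resp.\ continue to hold. So $\cat Q$ is a rectified Mackey 2-functor and $q$ a morphism of such.

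Next I would install the monoidal structure. The two-sided tensor ideal hypothesis guarantees that $(f-f')\otimes g$ and $g\otimes(f-f')$ factor through $\cat N(G)$ whenever $f-f'$ does; hence the $\kk$-bilinear functor $q_G\circ\otimes_G\colon \cat M(G)\times\cat M(G)\to\cat Q(G)$ factors through $q_G\times q_G$, and then, by the universal property \eqref{eq:UP-others}, through a unique $\kk$-linear functor $\odot^{\cat Q}_G\colon \cat Q(G)\otimes\cat Q(G)\to\cat Q(G)$; the associativity and unit constraints descend similarly, with unit object $q_G(\Unit_G)$, so each $q_G$ becomes strong monoidal. The $2$-cell components $\odot^{\cat Q}_u$ expressing monoidality of $\bar{u}^*$ are again $q$ applied to those of $\cat M$, hence invertible and coherent, so $\cat Q$ is a pseudomonoid in $\2Fun(\GG^\op,\ADDick)$ and $q$ a strong morphism of pseudomonoids (\Cref{Prop:Green-lifting}). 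If $\cat M$ is braided (resp.\ symmetric), the braidings $\beta^G$ descend to braidings on $\cat Q(G)$: the hexagon axioms (and, in the symmetric case, $\beta^2=\id$) are equalities of $2$-cells pushed forward along~$q$, and each $\bar{u}^*$ stays braided.

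Finally I would check the Green condition together with uniqueness. By \Cref{Def:quasi-Green} it suffices that, for each $i\in\JJ$, the four projection maps $\lproj^1,\lproj^2,\rproj^1,\rproj^2$ of \Cref{Def:Frobs} for $\cat Q$ satisfy $\lproj^k=(\rproj^k)^{-1}$. But each of these maps is built from the monoidality isomorphism of $\bar{i}^*$, the adjunction (co)units, and $\odot^{\cat Q}$, every one of which is $q$ applied to the corresponding datum of~$\cat M$; hence the projection maps for $\cat Q$ are $q$ applied to those of~$\cat M$, and the identities holding for the Green 2-functor $\cat M$ transport to~$\cat Q$. Thus $\cat Q$ is a Green 2-functor, braided or symmetric whenever $\cat M$ is. For uniqueness, any monoidal (resp.\ braided) structure on $\cat Q$ for which all $q_G$ are strong monoidal satisfies $q_G(X)\odot^{\cat Q}q_G(Y)\cong q_G(X\odot^{\cat M}Y)$ naturally; since $q_G$ is full and essentially surjective this determines $\odot^{\cat Q}$ on objects and morphisms up to unique coherent isomorphism, and the same reasoning pins down the pseudonaturality and braiding data. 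The main obstacle is purely bookkeeping: the only point requiring genuine (if routine) care is confirming that the descended data really constitutes a \emph{rectified} Mackey 2-functor---that the adjunctions, the invertibility of the Mackey mates, and the relations \eqref{eq:strict-MF} and \eqref{eq:special-Frob} all survive---after which every remaining assertion is a formal consequence of the universal properties of additive and $\kk$-bilinear quotients and of the fact that quotient $2$-functors preserve invertibility of $2$-cells.
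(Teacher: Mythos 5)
Your proposal is correct and takes essentially the same route as the paper: descend all the structure levelwise along the quotient functors (the tensor-ideal hypothesis handling the monoidal part) and then verify the Green condition in its projection-formula form (\Cref{Def:quasi-Green}), noting that the projection maps for $\cat Q$ are the images under the quotients of those for~$\cat M$. The only difference is that the paper delegates the descent of the Mackey 2-functor structure to \cite[Prop.\,4.2.5]{BalmerDellAmbrogio20}, which you instead re-derive by hand.
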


\begin{proof}
A levelwise additive quotient by a Mackey sub-2-functor inherits a unique structure of Mackey 2-functor by \cite[Prop.\,4.2.5]{BalmerDellAmbrogio20}.
Similarly, since each $\cat N(G)$ is an ideal, the quotient categories and the induced restriction functors between them inherit monoidal structures from~$\cat M$ and the induced 2-isomorphisms are still monoidal, so 
that we again have a pseudomonoid in  $\2Fun(\GG^\op,\ADDick)$ as required.
Since the adjunctions and monoidal structures on $\cat Q$ are induced by those of $\cat M$, we see that the projection formulas similarly descend to~$\cat Q$. The rest is as easy.
\end{proof}

\begin{Exa}[Stable module categories] \label{Exa:quots}
Consider the archetypical symmetric Green 2-functor $\cat M\colon G\mapsto \kk G \MMod$ of linear representations over a field~$\kk$ (\Cref{Exa:rep-theory}). It is defined on $(\gpd; \gpdf)$, but we can restrict it to the non-Cartesian pair $(\gpdf;\gpdf)$ for which it is a Green 2-functor in the general sense of \Cref{Def:quasi-Green}.
Now note that projective modules form a tensor ideal $\cat N(G)=\kk G \PProj \subseteq \cat M(G)$, by the projection formula $U \otimes i_*(V)\cong i_* (i^*U \otimes V) $ for $i\colon 1\hookrightarrow G$ (and $G$ a group). Moreover, the family $\cat N(G)$ is stable under induction and restriction functors along all faithful~$i$, since $i_*$ and $i^*$ are adjoint on both sides hence are exact.
By \Cref{Prop:proj-formula-quot}, the stable module categories $\mathrm{Stab}(\kk G)= \cat M(G) / \cat N(G)$ inherit from~$\cat M$ the structure of a symmetric Green 2-functor for $(\gpdf;\gpdf)$.
(This example cannot arise directly from \Cref{Thm:mon-der} because $\cat M(1)\simeq 0$; see \cite[Rem.\,4.2.7]{BalmerDellAmbrogio20}.)
\end{Exa}

\begin{Thm}[Presheaf Green 2-functors]
\label{Thm:presheaves}
Let $\cat A$ be a cocomplete abelian category. 
Let $\cat S$ be a Green 2-functor for $(\GG;\JJ)$ whose values $\cat S(G)$ are essentially small additive categories.
Then there is a Green 2-functor $\cat M$ for $(\GG;\JJ)$ such that $\cat M(G)=\Fun_\mathrm{add}(\cat S(G)^\op, \cat A)$ is the category of additive presheaves, and such that its 2-functoriality and its tensor structure are extended from those of $\cat S$ along the Yoneda embeddings $\cat S(G)\hookrightarrow \cat M(G)$.
If $\cat S$ is braided or symmetric then so is~$\cat M$.
Moreover, we have $(i_*N)(X) \cong N(i^* X)$ for all $(i\colon H\to G)\in \JJ$, $N\in \cat M(H)$ and $X\in \cat S(G)$.
\end{Thm}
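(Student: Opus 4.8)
The plan is to build $\cat M$ from $\cat S$ by the $\cat A$-valued presheaf construction, taking restriction to be left Kan extension. On objects set $\cat M(G):=\Add(\cat S(G)^\op,\cat A)$; this is cocomplete abelian, hence --- as $\kk=\bbZ$ --- an additive, idempotent complete $\kk$-linear category, so an object of $\bbA=\ADDick$. On a $1$-morphism $u\colon H\to G$ set $\cat M(u):=\mathrm{Lan}_{\cat S(u)^\op}\colon\cat M(G)\to\cat M(H)$, the pointwise left Kan extension along $\cat S(u)^\op\colon\cat S(G)^\op\to\cat S(H)^\op$: it exists since $\cat A$ is cocomplete, it is cocontinuous, and it is the essentially unique cocontinuous functor restricting to $\cat S(u)$ along the Yoneda embeddings $\mathrm{y}_G\colon\cat S(G)\hookrightarrow\cat M(G)$. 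This a~priori yields only a pseudofunctor $\GG^\op\to\bbA$, which one rectifies to a strict $2$-functor as in \cite[Ch.~4]{BalmerDellAmbrogio20} (and which may be taken rectified in the sense of \Cref{Rem:rectification}). Axiom~(1) of \Cref{Def:M2F} follows from $\cat S(\sqcup_\ell G_\ell)\simeq\prod_\ell\cat S(G_\ell)$ and the standard equivalence $\Add((\prod_\ell\cat S(G_\ell))^\op,\cat A)\simeq\prod_\ell\cat M(G_\ell)$.

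Induction and ambidexterity (axioms~(2) and~(4)) follow from one observation: precomposition is a \emph{strict} $2$-functor $\phi\mapsto(\phi^\op)^*$ on additive categories which sends an adjunction $\phi\dashv\psi$ to $(\phi^\op)^*\dashv(\psi^\op)^*$. Since $\cat S$ is a rectified Green $2$-functor (\Cref{Hyp:rectification}), for $(i\colon H\to G)\in\JJ$ the restriction $\cat S(i)$ has an ambidextrous adjoint $j\colon\cat S(H)\to\cat S(G)$, \ie $j\dashv\cat S(i)\dashv j$. Applying precomposition yields an ambidextrous adjunction $(j^\op)^*\dashv(\cat S(i)^\op)^*\dashv(j^\op)^*$ with $(\cat S(i)^\op)^*\colon\cat M(H)\to\cat M(G)$; and by uniqueness of left adjoints to $(\cat S(i)^\op)^*$ we get $(j^\op)^*\cong\mathrm{Lan}_{\cat S(i)^\op}=\cat M(i)=:i^*$. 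So $i^*$ has the \emph{same} functor $i_!=i_*:=(\cat S(i)^\op)^*$ --- precomposition with $\cat S(i)^\op$ --- as both its left and its right adjoint, and this is strictly functorial in $i$. The last assertion of the theorem is then immediate: $(i_*N)(X)=(N\circ\cat S(i)^\op)(X)=N(\cat S(i)X)=N(i^*X)$ for $N\in\cat M(H)$ and $X\in\cat S(G)$, writing $i^*$ also for the restriction $\cat S(i)$ of~$\cat S$.

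The rest of the structure is governed by a uniform density principle: every structure functor of $\cat M$ is cocontinuous and restricts, along the Yoneda embeddings, to the corresponding structure functor of $\cat S$, so any canonical natural transformation between composites of such functors is invertible as soon as it is so on representable presheaves, where --- modulo the Yoneda identifications --- it coincides with its $\cat S$-level analogue. For a Mackey square this gives axiom~(3): the Mackey-formula mates of $\cat M$ restrict on representables to those of $\cat S$, hence are invertible. The monoidal structure is Day convolution: each $\cat M(G)$ carries the tensor extending that of $\cat S(G)$ along $\mathrm{y}_G$ (here one uses a cocontinuous closed symmetric monoidal structure on $\cat A$), with associator, unitors and braiding/symmetry induced from $\cat S$; since $\cat S(u)$ is strong monoidal, so is its cocontinuous extension $\cat M(u)$. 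Thus $\cat M$ is a Mackey $2$-functor which is, moreover, a pseudomonoid --- braided, resp.\ symmetric, if $\cat S$ is --- in $\2Fun(\GG^\op,\bbA)$ with the symmetric monoidal structure of \Cref{Prop:tensor-pre-Green}.

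It remains to verify the defining condition of \Cref{Def:quasi-Green}: that the internal tensor of $\cat M$ satisfies the projection formulas for all $i\in\JJ$, \ie that the maps $\lproj^1,\lproj^2,\rproj^1,\rproj^2$ of \Cref{Def:Frobs} are invertible (for Cartesian $(\GG;\JJ)$ this is equivalently the bimorphism condition, by \Cref{Thm:proj-implies-bimorphism}). This is the crux, and I would prove it again by the density principle. The Day tensor $\odot$, the restriction $i^*=\mathrm{Lan}_{\cat S(i)^\op}$ and the induction $i_*=(\cat S(i)^\op)^*$ are all cocontinuous (precomposition is exact and cocontinuous, $i^*$ is a left adjoint, $\odot$ is cocontinuous in each variable), so each $\lproj^k,\rproj^k$ is a natural transformation between functors cocontinuous separately in each variable; it therefore suffices to check invertibility when $X,Y$ run over copowers of representables (of the form $\mathrm{y}_G(X')\otimes_\bbZ a$, $X'\in\cat S(G)$, $a\in\cat A$), which generate $\cat M(G)$ and $\cat M(H)$ under colimits by co-Yoneda, hence --- using cocontinuity once more --- over the representables $\mathrm{y}_G(X'),\mathrm{y}_H(Y')$ themselves. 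On representables, $i^*$ sends $\mathrm{y}_G(X')$ to $\mathrm{y}_H(\cat S(i)X')$ and $i_*$ sends $\mathrm{y}_H(Y')$ to $\mathrm{y}_G(jY')$ (the latter by $\cat S(i)\dashv j$), and $\mathrm{y}_G$ is strong monoidal, so \eg $\rproj^1_{\mathrm{y}_GX',\,\mathrm{y}_HY'}$ becomes $\mathrm{y}_G$ applied to the $\cat S$-level projection map $X'\otimes_G j(Y')\to j(\cat S(i)(X')\otimes_H Y')$, an isomorphism because $\cat S$ is a Green $2$-functor; the cocontinuity reduction then yields invertibility of $\rproj^1$ on all of $\cat M(G)$, and the remaining three maps are handled identically. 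The main obstacle is precisely this last step: confirming that the composites in \Cref{Def:Frobs} really are cocontinuous in the intended variable, and matching the chain of canonical isomorphisms ($\mathrm{y}$ monoidal, $i^*$ and $i_*$ on representables, the unit/counit rewritings of \Cref{Def:Frobs}) so that one genuinely lands on the already-proven projection formula for $\cat S$; the rectification and the exact monoidal hypotheses needed on $\cat A$ are routine side issues.
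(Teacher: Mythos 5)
Your proposal is correct and follows essentially the same route as the paper: the paper obtains the underlying Mackey 2-functor and the fact that induction is precomposition (whence $(i_*N)(X)\cong N(i^*X)$) by citing \cite[Prop.\,7.3.2]{BalmerDellAmbrogio20}, which you reprove directly, then extends the tensor by additive Day convolution and reduces the Green condition for $\cat M$ to that of $\cat S$ via the universal property of the defining coends, checking the bimorphism form of \Cref{Prop:bimorphism}, whereas you check the equivalent projection-formula form of \Cref{Def:quasi-Green} by the same density/cocontinuity argument (an equivalence the paper itself supplies in \Cref{Thm:proj-implies-bimorphism}, and which has the small advantage of not needing $(\GG;\JJ)$ Cartesian). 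Your parenthetical assumption of a cocontinuous symmetric monoidal structure on $\cat A$ goes beyond the stated hypothesis ``cocomplete abelian'', but it mirrors what the paper's own Day-convolution step implicitly uses to pair the values $M(X),N(Y)\in\cat A$ (in the paper's applications $\cat A=\kk\MMod$), so this is a point of imprecision shared with the paper rather than a defect of your argument; just be aware that on generators one lands on the $\cat S$-level projection maps copowered by objects of $\cat A$, not literally on representables.
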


\begin{proof}
The underlying Mackey 2-functor of $\cat M$ and the formula for induction~$i_*$ are provided by \cite[Prop.\,7.3.2]{BalmerDellAmbrogio20} and its proof (which are only formulated for $(\GG;\JJ)=(\gpd;\gpdf)$, but everything applies equally to the general case). 
The tensor structure~$\odot_G$ on each $\cat S(G)$ extends to $\cat M(G)$ by the (additive) Day convolution product; the strong monoidal structure on each restriction functor~$u^*$ extends uniquely to $\cat M$ by the (2-)universal property of the coend which defines Day convolution. 
The corresponding external product $\boxdot$ on $\cat M$ is related to that on $\cat S$ by a similar coend formula; in particular, again by the universal property of coends, it suffices to check the bimorphism conditions of \Cref{Prop:bimorphism} on $\cat S$, where it holds by hypothesis. 
We conclude that this extended product on $\cat M$ defines a Green 2-functor. The remaining details are straightforward and left to the reader.
\end{proof}

\begin{Exa} [Ordinary Mackey functors] 
\label{Exa:Mack}
There exists a Green 2-functor $\cat M$ for $(\gpd; \gpdf)$ such that $\cat M(G)$ is the category $\Mackey_\kk(G)$ of  ordinary $\kk$-linear Mackey functors for~$G$, and another one such that $\cat M(G)$ is the category $\CohMackey_\kk(G)$ of cohomological Mackey functors for~$G$; the so-called box product provides both tensor structures (\cite{Bouc97}). 
In both cases, we obtain $\cat M$ by applying \Cref{Thm:presheaves} to $\cat A= \kk \MMod$ and a suitable Green 2-functor~$\cat S$. 
For the second~$\cat M$, since $\CohMackey_\kk(G)= \Fun_\mathrm{add}(\mathrm{perm}_\mathbb Z(G), \kk \MMod)$ as tensor categories by Yoshida's theorem (\cf \cite{DellAmbrogioHuglo21}), we can take $\cat S$ to be the (level-wise dual of the) Green 2-functor $\cat S\colon G\mapsto \mathrm{perm}_\mathbb Z(G)^\op$ as in \Cref{Exa:rep-theory}.
For the first~$\cat M$, we can take $\cat S(G)$ to be the additive category of spans of finite $G$-sets, which defines a Mackey 2-functor $\cat S$ by \cite[Thm.\,7.2.3]{BalmerDellAmbrogio20}; the cartesian product of $G$-sets induces a tensor structure on $\cat S(G)$ extending (by definition!) to the box product on $\cat M(G)$, and the preservation of inductions is easily checked so that we get the required Green 2-functor~$\cat S$. 
\end{Exa}

As in \cite[\S4.4]{BalmerDellAmbrogio20}, we can consider Mackey 2-functors of ``equivariant objects''. 
They are quite often equipped with the structure of a Green 2-functor:

\begin{Thm}[Equivariant objects]
\label{Thm:equiv-obj}
Let $G$ be a fixed finite groupoid, and let $\cat S\colon G^\op\to \PsMon(\ADD)$ be a pseudofunctor to the 2-category of  additive 
monoidal categories. 
Then there exists a Green 2-functor for $\GG= \JJ = \gpdG$, denoted $\Gamma(-; \cat S)$, whose value at $(H,i_H)$ is the category $\Gamma((H,i_H); \cat S):= \Gamma(H; \cat S\circ i_H^\op)$ of $H$-equivariant objects, \ie of sections of the Grothendieck fibration associated with $\cat S\circ i_H^\op\colon H^\op \to \ADD$.
If $\cat S$ takes values in braided or symmetric monoidal categories, the associated Green 2-functor inherits a braiding or symmetry accordingly. 
\end{Thm}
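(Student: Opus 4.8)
The plan is to verify the three ingredients of \Cref{Def:2Green} one at a time --- the underlying Mackey 2-functor, the pseudomonoid structure, and the bimorphism condition --- carrying out most of the work by a fibrewise analysis inside the total category of the relevant Grothendieck fibration, and then to promote braidings and symmetries along the same lines.

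First I would recall, from (the evident adaptation to $\gpdG$ of) \cite[\S4.4]{BalmerDellAmbrogio20}, that after composing $\cat S$ with the forgetful 2-functor $\PsMon(\ADD)\to\ADD$ the assignment $(H,i_H)\mapsto\Gamma(H;\cat S\circ i_H^\op)$ underlies a Mackey 2-functor $\Gamma(-;\cat S)$ for $\GG=\JJ=\gpdG$. Here restriction along $(u\colon(K,i_K)\to(H,i_H))$ is pullback of sections --- using the pseudofunctoriality of $\cat S$ to identify $\cat S\circ i_H^\op\circ u^\op$ with $\cat S\circ i_K^\op$ --- while induction $u_*=u_!$ is, in each fibre, a finite biproduct assembled from the values of the section along the transition functors of $\cat S$; in particular both $u^*$ and $u_*$ are additive functors. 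Next I would equip each $\Gamma((H,i_H);\cat S)$ with the \emph{fibrewise} tensor product: for sections $s,t$, set $(s\otimes t)(h):=s(h)\otimes_{\cat S(i_Hh)}t(h)$, transporting the coherence data of $s$ and $t$ along the transition functors of $\cat S\circ i_H^\op$, which are strong monoidal because $\cat S$ lands in $\PsMon(\ADD)$; the unit is the fibrewise tensor unit, and the associativity and unitality constraints are inherited from $\cat S$. This tensor is additive in each variable (the tensor product of a pseudomonoid in $\ADD$ is biadditive, by \Cref{Exa:monADD}), every restriction functor $u^*$ is strong monoidal, and the pseudonaturality 2-cells of $\Gamma(-;\cat S)$ are monoidal. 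By \Cref{Prop:Green-lifting} and \Cref{Rem:pointwise-structure} this is exactly the datum of a pseudomonoid object on $\Gamma(-;\cat S)$ in $\2Fun(\gpdG^\op,\ADD)$; everything here is routine bookkeeping in the total category of the fibration.

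It then remains to check that the multiplication is a $\JJ$-bimorphism, i.e.\ condition~(2) of \Cref{Def:2Green}. By \Cref{Prop:bimorphism}\,\eqref{it:right-special} it suffices to treat pairs $(i_1,i_2)$ with one entry an identity, which reduces everything to the (external form of the) projection formula for a single $(i\colon K\to H)$ in $\gpdG$. This is where the fibrewise pictures pay off: both $\otimes$ and $i_*$ are built fibrewise out of the strong monoidal transition functors of $\cat S$ together with finite biproducts, and tensoring with a fixed object in an additive monoidal category commutes with finite biproducts; hence the canonical comparison morphism is an isomorphism fibre by fibre, and one checks that it respects the coherence data. (Equivalently, one may verify directly that the four projection maps of \Cref{Def:Frobs} are invertible and invoke \Cref{Thm:proj-implies-bimorphism} and \Cref{Def:quasi-Green}.) I expect this compatibility of the fibrewise tensor with the (also fibrewise, but combinatorially more intricate) induction functor to be the main point of the proof; everything else is formal.

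Finally, if $\cat S$ takes values in $\BrPsMon(\ADD)$ or $\SymPsMon(\ADD)$, the fibrewise braiding $s(h)\otimes t(h)\overset{\sim}{\to}t(h)\otimes s(h)$ given by the braiding of $\cat S(i_Hh)$ is natural in the section, is compatible with the transition functors (now braided), makes every restriction functor a braided strong monoidal functor, and satisfies the braid (resp.\ symmetry) axioms fibrewise; by \Cref{Rem:pointwise-structure} this upgrades $\Gamma(-;\cat S)$ to a braided (resp.\ symmetric) Green 2-functor, with the remaining coherence checks left to the reader.
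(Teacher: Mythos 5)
Your proposal is correct and follows essentially the same route as the paper: take the underlying Mackey 2-functor from \cite[Thm.\,4.4.16]{BalmerDellAmbrogio20}, equip each category of sections with the fibrewise tensor product with coherence transported along the strong monoidal transition functors of~$\cat S$, observe via \Cref{Prop:Green-lifting} that this is a pseudomonoid with strong monoidal restrictions, and then verify that the associated external pairing preserves inductions (the paper likewise leaves this last check to the reader, suggesting the explicit Kan formulas for~$i_*$, which is exactly the fibrewise-distributivity computation you sketch). The braided/symmetric upgrade is handled the same way in both arguments.
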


\begin{proof}
The underlying Mackey 2-functor for $\gpdG$ is given by \cite[Thm.\,4.4.16]{BalmerDellAmbrogio20}, applied to $\cat S$ composed with the forgetful 2-functor $\PsMon(\ADD)\to \ADD$.

W must define the pairing and show that it preserves inductions. 
We only sketch the construction, using the notations of \emph{loc.\,cit.}, leaving all (quite straightforward) verifications to the reader.  
The tensor product $(X,\varphi)\otimes (Y,\psi)$ of two objects $(X,\varphi)$, $(Y,\psi)$ of $\Gamma(H; \cat S\circ i_H^\op)$ has object-components $(X\otimes Y)_p:=X_p \otimes Y_p$ for $p\in \Obj H$, using the given internal tensor of $\cat S(p)$; and morphism-components
\[
\xymatrix{
(\varphi \otimes \psi)_g \colon X_p \otimes Y_p
 \ar[r]^-{\varphi_g \otimes \psi_g} & 
 g^*X_q \otimes g^* Y_q
  \ar[r]^-{\sim} &
 g^*(X_q \otimes Y_q)
}
\]
for all $(g\colon p\to q)\in \Mor H$, using the given strong monoidal structure of the functor~$g^*= \cat S (i_H (g))$.
The tensor product of morphisms is similarly diagonal: $(\xi \otimes \zeta)_p= \xi_p \otimes \zeta_p$.
This defines a monoidal structure on the category $\Gamma((H,i_H); \cat S)$ for each object $(H,i_H)\in \gpdG$.
For every 1-morphism $(i, \theta_i)\colon (K, i_K)\to (H,i_H)$ in $\gpdG$, the restriction functor $(i, \theta_i)^*\colon \Gamma((H,i_H); \cat S) \to \Gamma((K,i_K); \cat S)$ defined as in \emph{loc.\,cit.\ }receives a strong monoidal structure from those given on the pull-back functors $(\theta^{-1}_{i,q})^* = \cat S(\theta^{-1}_{i,q})$ (for all $q\in \Obj K$).
Similarly, the natural isomorphism induced by a 2-cell $\alpha\colon (i, \theta_i)\Rightarrow (j, \theta_j) $ is monoidal, which ultimately boils down to the natural transformation $\alpha^*= \cat S(i_H \alpha)$ being monoidal by hypothesis. 
We thus obtain a 2-functor $\Gamma(-;\cat S)\colon (\gpdG)^{\op} \to \PsMon(\ADD)$.

By \Cref{Prop:Green-lifting}, it only remains to see that the corresponding external pairing preserves inductions in both variables. 
We leave this lengthy but straightforward verification to the reader, who may compute using the explicit Kan formulas for the adjunctions (\cf \cite[Lemma 4.4.11]{BalmerDellAmbrogio20}).
\end{proof}

We can apply the latter result to equivariant sheaves in geometry:

\begin{Exa} [Equivariant sheaves] \label{Exa:sheaves}
Suppose a finite group $G$ acts on a locally ringed space~$X=(X,\mathcal O_X)$, and let 
$\Mod(X)$ be the symmetric monoidal category of sheaves of $\mathcal O_X$-modules.
Pulling back along elements $g\in G$ defines a pseudoaction of $G$ on $\Mod(X)$, \ie a pseudofunctor $\cat S\colon G^\op \to \PsMon(\ADD)$ which maps the unique (unnamed) object of $G$ to $\Mod(X)$. 
By \Cref{Thm:equiv-obj}, we get a symmetric Green functor $\cat M$ on $\gpdG$ whose value at a subgroup $H\leq G$ is the category $\Mod(X/\!\!/H)$ of $H$-equivariant sheaves of $\mathcal O_X$-modules.
Many variations are possible, by replacing $\Mod(X)$ with other monoidal categories. For instance, we may consider constructible sheaves over some commutative ring, or~-- if $X$ is a noetherian scheme~-- coherent or quasicoherent $\mathcal O_X$-modules. We may also use chain complexes, or (variously bounded) homotopy or derived categories. 
See \cite[\S4.4]{BalmerDellAmbrogio20} for details.
\end{Exa}

Let us sketch some more examples, leaving the details to interested readers.

\begin{Exa} [Equivariant stable homotopy] \label{Exa:SH}
As in \cite[Ex.\,4.3.8]{BalmerDellAmbrogio20}, there is a Mackey 2-functor $\cat M$ for $(\gpd;\gpdf)$ such that $\cat M(G)= \Ho(\textrm{Sp}^G)$ is the homotopy category of genuine $G$-equivariant spectra. The latter is a symmetric monoidal category by the smash product of $G$-spectra and the restriction functors $i^*$ are strong symmetric monoidal. This structure extends to a bimorphism $(\cat M,\cat M)\to \cat M$ turning $\cat M$ intro a symmetric Green functor.
\end{Exa}

\begin{Exa} [Equivariant Kasparov theory] \label{Exa:KK}
As in \cite[Ex.\,4.3.9]{BalmerDellAmbrogio20}, there is a Mackey 2-functor $\cat M$ for $(\gpd;\gpdf)$ (in fact for $(\gpd;\gpd)$!) such that $\cat M(G)= \KK^G$ is the $G$-equivariant Kasparov category of separable complex $G$-C*-algebras. The latter is a symmetric monoidal category by the minimal tensor product of C*-algebras equipped with the diagonal $G$-action. This structure extends to a bimorphism which turns $\cat M$ into a symmetric monoidal Green functor.
\end{Exa}

\begin{Rem}
We suspect many Green 2-functors to arise from $G$-equivariant ring spectra (\cf \cite{LewisMandell06}) and/or from suitable $E_\infty$-Green functors in the sense of Barwick \emph{et\,al.\ }\cite{Barwick17} \cite{BGS20}, by taking module categories of their values at all $H\leq G$. 
In any case, it would be interesting to make precise the way(s) that the categorification of Green functors due to Barwick \emph{et\,al.\ }is related to our own.
\end{Rem}

\begin{Rem}
All concrete examples of Green functors we have explicitly mentioned happen to be symmetric. 
Nonetheless, it is easy to find Green functors which are braided but not symmetric, or just ``plain'', for instance those  in~\Cref{Exa:models} deriving from suitably chosen monoidal model categories. 
\end{Rem}

\section{Green 1-functors for general $(\GG;\JJ)$}
\label{sec:1-Green}%

In this section we revisit the notion of (ordinary) \emph{Green functor} by introducing a general definition which specializes to all variants in use for finite groups (\Cref{Exas:1-Green}). We base our definition (\Cref{Def:Green-fun}) on the original view as ``levelwise ring with Frobenius formulas'' and verify that, when the pair $(\GG;\JJ)$ is Cartesian, these are just monoids for the induced monoidal structure (\Cref{Prop:1-Green-equivs}).

We begin by recalling the following flexible definition of ordinary Mackey functor.
Let $(\GG; \JJ)$ be a spannable pair (\Cref{Hyp:spannable-pair}) and fix a commutative ring~$\kk$.

\begin{Def}[{\cite{BalmerDellAmbrogio20} \cite{DellAmbrogio21ch}}]
\label{Def:Mackey-fun}
A \emph{Mackey functor for $(\GG;\JJ)$} is an additive functor
\[
M\colon \pih \Span (\GG;\JJ) \longrightarrow \kk\MMod
\]
taking values in $\kk$-modules and defined on the semiadditive (truncated ordinary) \emph{span category} of $(\GG;\JJ)$ (see \cite[Def.\,2.5.1]{BalmerDellAmbrogio20} or \cite[\S2.3]{DellAmbrogio21ch}). 
Concretely this means that $M$ assigns  to every $G\in \Obj \GG$ a $\kk$-module $M(G)$, to every 1-morphism $u\colon H\to G$ in $\GG$ a $\kk$-linear homomorphism $u^\sbull\colon M(G)\to M(H)$ and, if $u\in \JJ$, also a homomorphism $u_\sbull\colon M(H)\to M(G)$, satisfying the following four axioms:
\begin{enumerate}[\rm(a)]
\item
\label{it:1-Mackey-fun}%
\emph{Functoriality:}
We have $\id^\sbull=\id_\sbull=\id$, and for composable $K\xto{v} H\xto{u}G$ we have $(u\circ v)^\sbull=v^\sbull\circ u^\sbull$ and  when $u,v\in \JJ$ also $(u\circ v)_\sbull=u_\sbull\circ v_\sbull$.
\item
\label{it:1-Mackey-iso}%
\emph{Isomorphism invariance:} Whenever $u\simeq v$ are isomorphic in $\GG$, we have $u^\sbull = v^\sbull$ and when $u,v\in \JJ$ also $u_\sbull = v_\sbull$.
\item
\label{it:1-Mackey-add}%
\emph{Additivity:}
every coproduct $G\overset{i}{\to} G\sqcup H \overset{\;j}{\gets} H$ in~$\GG$ induces an isomorphism
$(i^\sbull, j^\sbull)^t \colon M(G\sqcup H) \overset{\sim}{\to} M(G)\oplus M(H)$ with inverse $(i_\sbull , j_\sbull)$ (hence $M(\emptyset) \cong 0$).
\item
\label{it:1-Mackey-formula}%
\emph{Mackey formula:}
For every Mackey square in~$\GG$ with $i$ and $q$ in~$\JJ$
\begin{equation*}
\vcenter{
\xymatrix@C=10pt@R=10pt{
& P \ar[ld]_-{p} \ar[rd]^{q} \ar@{}[dd]|{\isocell{\gamma}} & \\
H \ar[rd]_-{i} &  & K \ar[ld]^-{u} \\
& G &
}}
\end{equation*}
we have the equality $u^\sbull\circ i_\sbull=q_\sbull\circ p^\sbull$ of maps $M(H)\to M(K)$.
\end{enumerate}
As in \cite{DellAmbrogio21ch}, we denote by
\[ \Mackey_\kk(\GG;\JJ) := \Fun_\mathrm{add}(\pih \Span (\GG;\JJ), \kk\MMod) \]
the $\kk$-linear category of Mackey functors and natural transformations. Note that a natural transformation $\alpha \colon M\to N$ is the same thing as a family of maps $\alpha_G\colon M(G)\to M(G)$ ($G\in \Obj \GG$) which is compatible with respect to the restriction maps~$u^\sbull$ for all $u\colon H\to G$ and, for $u\in \JJ$, also with the induction maps~$u_\sbull$.
\end{Def}

We can now introduce a corresponding general notion of Green functor:

\begin{Def} [{Green 1-functor for~$(\GG;\JJ)$}]
\label{Def:Green-fun}
Let $M,N,L$ be Mackey functors for the spannable pair $(\GG;\JJ)$.
We define a \emph{Dress\footnote{Compare with the definition of ``pairing'' given in \cite[p.\,195]{Dress73}.} pairing} $\beta\colon (M,N)\to L$ to be a family of $\kk$-linear morphisms $\beta=\beta_G\colon M(G)\otimes_\kk N(G)\to L(G)$ for $G\in \Obj \GG$ satisfying:
\begin{enumerate}[\rm(a)]
\item
\label{it:1-Mackey-ring}%
Naturality for restrictions: $u^\sbull (\beta (x , y)) = \beta (u^\sbull(x), u^\sbull(y))$ for all $(H\overset{u}{\to} G)\in \GG$.
\item
\label{it:1-Mackey-Frob}%
The \emph{Frobenius formulas}  (or \emph{projection formulas})  $\beta( x , u_\sbull(y))=u_\sbull (\beta( u^\sbull (x) , y))$ and $\beta( u_\sbull(y), x) = u_\sbull \beta(y , u^\sbull (x))$ for all $x\in M(G)$ and $y\in M(H)$, when $u\in \JJ$.
\end{enumerate}
A \emph{Green functor for $(\GG;\JJ)$} is then a Mackey functor $M$ for $(\GG;\JJ)$ together with a Dress pairing $\smash{(M,M)\overset{\cdot}{\to} M}$ which is associative and unital, \ie one which turns each $M(G)$ into an associative unital $\kk$-algebra.
Similarly, a left (or right) module $N$ over the Green functor $M$ is a Mackey functor $N$ together with a pairing $(M,N)\to N$ (resp.\ $(N,M)\to N$) which turns each $N(G)$ into a left (or right) $M(G)$-module.
\end{Def}

The following reassuring result is well-known at least in some cases (\cf \cite[Prop.\,1.4]{Lewis80} or \cite[\S2]{Bouc97}), but we provide the general proof for completeness. 

\begin{Prop} \label{Prop:1-Green-equivs}
Suppose the pair $(\GG;\JJ)$ is Cartesian. Then:
\begin{enumerate}[\rm(1)]
\item  \label{it:Day-convo}
The products of $\GG$ induce an additive symmetric monoidal structure  on the span category $\pih \Span (\GG;\JJ)$ and thus, by Day convolution with respect to~$\otimes_\kk$, a $\kk$-linear closed symmetric monoidal structure on $\Mackey_\kk(\GG;\JJ)$.
\item  \label{it:corr-pairing}
The data of a Dress pairing $\beta\colon (M,N)\to L$ for $(\GG;\JJ)$ (as in \Cref{Def:Green-fun}) is equivalent to that of a morphism $\overline{\beta}\colon M\otimes N\to L$ in the tensor category of~\eqref{it:Day-convo}.
\item  \label{it:corr-Green}
Under the correspondence of~\eqref{it:corr-pairing}, a (commutative) Green functor corresponds to a (commutative) monoid in $\Mackey_\kk(\GG;\JJ)$, and similarly for modules.
\end{enumerate}
\end{Prop}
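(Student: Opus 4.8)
The plan is to establish the three statements in order, each following essentially formally once the right setup is in place. For part~\eqref{it:Day-convo}, I would first observe that the Cartesian product on $\GG$ (which exists and restricts to $\JJ$ by \Cref{Hyp:prods}) descends to the span bicategory and hence to its truncation $\pih\Span(\GG;\JJ)$: a product of spans $(H_1 \xleftarrow{p_1} P_1 \xrightarrow{q_1} K_1)$ and $(H_2 \xleftarrow{p_2} P_2 \xrightarrow{q_2} K_2)$ is $(H_1\times H_2 \xleftarrow{p_1\times p_2} P_1\times P_2 \xrightarrow{q_1\times q_2} K_1\times K_2)$; this is well-defined on isomorphism classes of spans because $-\times-$ preserves Mackey squares (same reasoning as in the proof of \Cref{Lem:Mackey-for-proj}) and carries $\JJ$ into $\JJ$, and it is visibly additive (distributes over the coproduct/biproduct $\sqcup$) and symmetric monoidal with unit the terminal object~$1$. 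Then Day convolution along the symmetric monoidal functor $-\otimes_\kk-$ on $\kk\MMod$ produces the claimed closed symmetric monoidal structure on the functor category $\Mackey_\kk(\GG;\JJ) = \Fun_{\mathrm{add}}(\pih\Span(\GG;\JJ),\kk\MMod)$, by the standard enriched Day convolution theorem; closedness holds because $\kk\MMod$ is closed and the Hom objects of $\pih\Span(\GG;\JJ)$ are small.

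For part~\eqref{it:corr-pairing}, I would unwind the universal property of Day convolution: a morphism $\overline\beta\colon M\otimes N\to L$ in $\Mackey_\kk(\GG;\JJ)$ corresponds, by the coend formula $M\otimes N = \int^{G_1,G_2} \pih\Span(\GG;\JJ)(G_1\times G_2,-)\otimes_\kk M(G_1)\otimes_\kk N(G_2)$, to a family of $\kk$-linear maps $M(G_1)\otimes_\kk N(G_2)\to L(G_1\times G_2)$, dinatural in $G_1,G_2$ with respect to all morphisms of $\pih\Span(\GG;\JJ)$. Restricting along the diagonals $\delta_G\colon G\to G\times G$ gives the family $\beta_G\colon M(G)\otimes_\kk N(G)\to L(G)$, and I would check that dinaturality with respect to the restriction maps $u^\sbull$ (images of ``left-pointing'' spans) yields axiom~\eqref{it:1-Mackey-ring}, while dinaturality with respect to induction maps $u_\sbull$ (images of ``right-pointing'' spans) yields the Frobenius formulas~\eqref{it:1-Mackey-Frob}, using exactly the Mackey squares of \Cref{Lem:Mackey-for-proj} and~\eqref{eq:squares-quasi-Green} to identify $(u\times\Id)$-type base changes. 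Conversely, given $\beta$ satisfying \eqref{it:1-Mackey-ring}–\eqref{it:1-Mackey-Frob}, one extends to $\boxdot_{G_1,G_2}\colon M(G_1)\otimes_\kk N(G_2)\to L(G_1\times G_2)$ by $\boxdot = \pr_1^\sbull\otimes\pr_2^\sbull$ followed by $\beta_{G_1\times G_2}$ (mirroring \Cref{Rem:mate-corr-pairing}), and checks this is dinatural, hence defines $\overline\beta$; the two assignments are mutually inverse. This is the decategorified shadow of \Cref{Prop:pairings}, and indeed I would point the reader to that proposition, noting that the present statement is its image under truncation and $\mathrm K$-theory-free specialization.

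For part~\eqref{it:corr-Green}, once \eqref{it:corr-pairing} is known the claim is immediate: associativity and unitality of the Dress pairing $(M,M)\to M$ translate, under the correspondence, into the associativity and unit diagrams for $\overline\beta\colon M\otimes M\to M$ with unit $\Unit\to M$ (where the monoidal unit $\Unit$ of $\Mackey_\kk(\GG;\JJ)$ is the representable Mackey functor $\pih\Span(\GG;\JJ)(1,-)$, the Burnside Mackey functor), because the structural isomorphisms of the Day convolution are built from those of $\otimes_\kk$ and the product on $\pih\Span(\GG;\JJ)$, so the coherence diagrams correspond term by term; commutativity corresponds to $\beta_G(x,y)=\beta_G(y,x)$ via the symmetry, and the module statements are handled identically. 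The main obstacle I anticipate is not conceptual but bookkeeping: carefully matching the dinaturality conditions in the coend description against the two families of generating morphisms (restrictions and inductions) of the span category, and verifying that the relevant squares are Mackey squares in an arbitrary Cartesian $(2,1)$-category — but this is precisely the computation already carried out for \Cref{Lem:Mackey-for-proj} and in the proof of \Cref{Thm:proj-implies-bimorphism}, so I would invoke those rather than repeat them. I would close by remarking that all of this is well known for the classical pairs of \Cref{Exas:GGJJ} (e.g.\ \cite{Lewis80,Bouc97}), the only novelty being the uniform formulation.
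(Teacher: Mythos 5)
Your proposal is correct and follows essentially the same route as the paper's proof: products on $\GG$ induce the symmetric monoidal structure on $\pih\Span(\GG;\JJ)$ and Day convolution gives the tensor on $\Mackey_\kk(\GG;\JJ)$, and the correspondence with Dress pairings is obtained by restricting along diagonals and extending along projections, with the Frobenius formulas and naturality for inductions checked via the Mackey squares of \Cref{Lem:Mackey-for-proj} and \eqref{eq:squares-quasi-Green}, exactly as the paper does by invoking simplified versions of the proofs of \Cref{Thm:proj-formula} and \Cref{Thm:proj-implies-bimorphism}. One small slip: the fact that $-\times-$ preserves Mackey squares is needed for functoriality of the product on the span category (compatibility with span composition), not for well-definedness on isomorphism classes, and the additivity of the product rests on extensivity of $\GG$ rather than being ``visible''.
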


\begin{proof}
It is straightforward to check that the products $G\times H$ of $\GG$ extend to spans to define a symmetric monoidal structure on $\pih \Span (\GG;\JJ)$; it is additive in both variables since finite products distribute over finite coproducts (because $\GG$ is extensive by hypothesis, and reasoning as in \cite[Prop.\,2.5]{CLW93}; \cf \cite{PanchadcharamStreet07}).
We can then apply $\kk$-linear Day convolution (see \eg \cite[\S6.2]{Loregian21}) to get the tensor structure~$\otimes$ on $\Mackey_\kk(\GG;\JJ)$ as claimed in part~\eqref{it:Day-convo}. 

Note that, by the universal property of Day convolution as a coend, a morphism $\overline{\beta} \colon M\otimes N\to L$ in $\Mackey_\kk(\GG;\JJ)$ is uniquely determined by the $\kk$-linear maps
\[ 
\tilde \beta_{G,H} \colon 
\xymatrix{ 
M(G) \otimes_\kk N(H)
 \ar[r]^-{\textrm{can.}} &
 (M \otimes N)(G\times H)
  \ar[r]^-{\overline{\beta}_{G\times H}} &
  L(G\times H)
} 
\]
 natural in $G,H\in \pih \Span (\GG;\JJ)$, obtained by precomposing $\overline{\beta}$ with the coend structure map. 
Moreover, $\overline{\beta}$ is the associative and unital structure map of a monoid or module precisely when the family $\tilde \beta = \{\tilde \beta_{G,H}\}$ is associative and unital in an evident corresponding sense (\eg the case of a monoid $M=N=L$ yields precisely a lax monoidal structure on the functor~$M$, and the general case is very similar).

To prove part~\eqref{it:corr-pairing}, it will therefore suffice to see that the data of such a natural family~$\tilde \beta$ is equivalent to that of a Dress pairing~$\beta$.
Indeed, we get a bijective correspondence $\tilde \beta \leftrightarrow \beta$ just as in \Cref{Rem:mate-corr-pairing}, by restricting along diagonals and projections: 
Given $\tilde \beta$, we construct $\beta = \{\beta_G\}_G$ as the composite
\[
\beta_{G} \colon 
\xymatrix{ M(G) \otimes_\kk N(G)
 \ar[r]^-{\tilde \beta_{G,G}} &
 L(G\times G)
  \ar[r]^-{\delta^\sbull_G} &
  L(G)},
\]
whereas if we are given~$\beta$, we get the following family of maps:
\[
\tilde \beta_{G,H}\colon 
\xymatrix@C=28pt{
M(G) \otimes_\kk N(H) 
 \ar[r]^-{\pr^\sbull_1 \otimes \pr^\sbull_2} &
L(G \times H) \otimes_\kk N(G\times H)
 \ar[r]^-{\beta_{G \times H}} &
 L(G \times H)
}.
\]
Given a natural family~$\tilde \beta$, we must check that the family $\beta$ forms a Dress pairing. 
And indeed, $\beta$ is clearly natural with respect to restrictions (though usually not for inductions!), and it satisfies the two projection formulas by a simpler version of the proof of \Cref{Thm:proj-formula}. (Just draw in $\kk\MMod$ the analog of that proof's last diagram, which will commute by the naturality of $\tilde \beta$ for both restrictions and inductions together with the Mackey formula for the square~\eqref{eq:Mackey-for-proj}.) 

Conversely, if we are given a Dress pairing $\beta$ we must show that the corresponding~$\tilde \beta$ is natural in both variables and with respect to both restriction and induction maps. 
The naturality for restrictions follows immediately from that of~$\beta$.
The naturality with respect to inductions, say for two 1-cells $i\colon G'\to G$ and $j\colon H'\to H$ in~$\JJ$, follows by a simpler version of the proof of \Cref{Thm:proj-implies-bimorphism}.
(Just draw in $\kk\MMod$ the analog of that proof's large diagram, which will commute by the two Frobenius formulas for~$\beta$, the functoriality of induction and restriction, and the Mackey formula for the two squares~\eqref{eq:squares-quasi-Green}.)
This concludes the proof of part~\eqref{it:corr-pairing}.

For part~\eqref{it:corr-Green}, in view of the remarks made at the beginning, it suffices to check that a family $\tilde \beta$ is associative and unital precisely when the corresponding Dress pairing is. This is a straightforward verification we leave to the reader.
\end{proof}

\begin{Exas}[{See \cite{DellAmbrogio21ch} for more examples and details}]
\label{Exas:1-Green}
If $(\GG;\JJ)$ is $(\gpdf;\gpdf)$, $(\gpd; \gpdf)$ or $(\gpdG;\gpdG)$, as in \Cref{Exas:GGJJ}, the associated Mackey and Green functors in our sense are the classical notions of, respectively: globally defined Mackey and Green functors (a.k.a.\ bifree biset functors \cite{Bouc10}), globally defined inflation functors (a.k.a.\ right-free biset functors), and Mackey and Green functors for a fixed group~$G$.
With $\GG= \JJ = \gpd$ we get the variant of global Mackey and Green functors studied by Ganter \cite{Ganter13pp} and Nakaoka \cite{Nakaoka16} \cite{Nakaoka16a}, which also allows ``induction'' along non-faithful functors (\eg deflations). The category of the latter Mackey functors contains that of all biset functors as a reflective (tensor-ideal) subcategory; see \cite{DellAmbrogioHuglo21}.
\end{Exas}

\begin{Rem} \label{Rem:misalignment}
Let us review the analogies between Green 1-functors and Green 2-functors.
The more general definition of a Green 2-functor via the projection formulas (\Cref{Def:quasi-Green}) categorifies the original point of view on Green 1-functors (\Cref{Def:Green-fun}), while the definition via bimorphisms (\Cref{Def:2Green}) valid in the common situation of Cartesian pairs corresponds to viewing a Green 1-functor as a lax monoidal functor, \ie as the data $\tilde \beta$ in the proof of \Cref{Prop:1-Green-equivs}. 
 
In order to complete this picture, we should also be able to view Green 2-functors (for a fixed Cartesian pair) as pseudomonoids in a suitable tensor 2-category $\bickMack$ of Mackey 2-functors. 
For the Cartesian pairs in \Cref{Exas:GGJJ} where $\JJ$ consists of \emph{faithful} functors, at least (which unfortunately excludes the pair~$(\gpd;\gpd)$), we provided in \cite{BalmerDellAmbrogio21} a biequivalence $\bickMack \simeq \PsFun_\sqcup(\Spanhat, \ADD^\ic_\kk)$ for a certain bicategory $\Spanhat $ of \emph{Mackey 2-motives}, which contains the ``usual'' span bicategory $\Span$ but has more 2-cells. 
We could thus use Day convolution for monoidal bicategories (\cite{Corner19}) to put a tensor structure on $\bickMack$, but first we need to check that the Cartesian product of $\GG$ extends to a symmetric monoidal structure on $\Spanhat$. 
We conjecture this to be possible by extending the arguments in \cite{Hoffnung11pp}.
We do not pursue here this motivic approach, though elegant, as it would require yet another layer of machinery for a (so far) small return in applications.
\end{Rem}

\section{The origins of classical Green functors}
\label{sec:origins}%

In this final section we explain how ordinary Green functors can be obtained from Green 2-functors by several procedures deserving the name of ``decategorification''. We then show how the classical Green functors (as can be found \eg in~\cite{Webb00}) arise this way from the natural examples of Green 2-functors discussed in \Cref{sec:exa}. As before, $(\GG;\JJ)$ denotes a (almost always) Cartesian spannable pair (Hypotheses~\ref{Hyp:spannable-pair} and~\ref{Hyp:prods}) and $\kk$ the chosen commutative ring of coefficients.

\begin{Cons}[K-decategorification; see {\cite[Ch.\,2.5]{BalmerDellAmbrogio20}}]
\label{Cons:K-decat}
The ordinary $\mathrm K_0$ of additive categories defines a functor $\Kadd \colon \pih \Add \to \Ab$, which to a small additive category $\cat A\in \Add$ associates the Grothendieck group $\Kadd(\cat A)= \mathrm K_0(\cat A, \oplus, 0)$ and to an additive functor $F\colon \cat A \to \cat B$ the induced homomorphism $\Kadd(\cat A)\to \Kadd(\cat B)$; with isomorphic functors inducing the same homomorphism. 
By \cite[\S5.2]{BalmerDellAmbrogio20}, every Mackey 2-functor $\cat M\colon \GG^\op\to \Add$ for $(\GG;\JJ)$ and $\kk=\mathbb Z$ admits a canonical additive extension $\widehat{\cat M}\colon \Span(\GG;\JJ)\to \Add$ to the span bicategory. 
Then the composite map
\[ 
M := \Kadd \circ \pih(\widehat{\cat M}) \colon \pih \Span(\GG;\JJ) \to \pih \Add \to \Ab
\] 
is additive, \ie is an ordinary Mackey functor for $(\GG;\JJ)$ as in \Cref{Def:Mackey-fun}.
\end{Cons}

\begin{Rem}[Variant K-decategorifications]
\label{Rem:var-K-decat}
If the value-categories of $\cat M$ are equipped with further structure, we may prefer to use a more pertinent version of $\mathrm K_0$-groups. For instance, if $\cat M$ lands in exact (\eg abelian) categories and exact functors, or in triangulated categories and triangulated functors, we could use $\Kex$ or $\Ktr$ respectively, the $\mathrm 
K_0$-group of exact or triangulated categories.
\end{Rem}

\begin{Rem}[Higher K-theory]
\label{Rem:exhigher}
If $\cat M$ takes values in exact categories, we may consider Quillen's higher algebraic K-theory of exact categories $\cat E\mapsto \Kexhigher(\cat E) = \pi_* \Omega B Q\cat E$.
Since the $Q$-construction $Q\cat E$ is functorial for exact functors, and since isomorphic functors induce the same map on K-theory groups, precisely the same argument as in \Cref{Cons:K-decat} yields a Mackey functor $G\mapsto \Kexhigher(\cat M(G))$.
\end{Rem}

\begin{Cons}[Hom-decategorification; see {\cite[\S3]{BalmerDellAmbrogio21pp}}]
\label{Cons:Hom-decat}
Let $\cat M$ be a Mackey 2-functor for $(\GG;\JJ)$, and suppose $\{X_G , Y_G, \lambda_u, \rho_u\}_{G,u}$ is a given \emph{coherent family of pairs of objects in~$\cat M$}. 
It consists of objects $X_G, Y_G \in \cat M(G)$ for $G\in \Obj \GG$ and identifications $\lambda_u\colon X_H\overset{\sim}{\to} u^*X_G$  and $\rho_u\colon u^*Y_G\overset{\sim}{\to} Y_H$ for every 1-morphism $u\colon H\to G$ of~$\GG$, suitably compatible with the 2-morphisms of~$\GG$.
Then there exists an ordinary Mackey functor $M= M(\{X_G , Y_G, \lambda_u, \rho_u\}_{G,u})$ for $(\GG;\JJ)$ with value
\[
M(G) = \Hom_{\cat M(G)}(X_G, Y_G)
\]
at $G\in \Obj \GG$. 
Restriction maps $u^\sbull = M([G \overset{u}{\gets} H = H]) \colon M(G)\to M(H)$ and transfer maps $u_\sbull = M([H = H \overset{u}{\to} G])\colon M(H)\to M(G)$ (if $u\in \JJ$)  are defined by 
\[
u^\sbull(\varphi) := \rho_u \circ u^*(\varphi) \circ \lambda_u 
\quad\textrm{and}\quad
u_\sbull(\psi) := \leps \circ u_*(\rho^{-1}_u\psi\lambda_u^{-1}) \circ \reta
\]
respectively, for all $\varphi\in \cat M(G)(X_G,Y_G)$ and $\psi \in \cat M(H)(X_H,Y_H)$.
\end{Cons}

We are now ready for the last three abstract results of our theory, which specify three independent (but occasionally overlapping) methods of squeezing ordinary Green functors from a Mackey or Green 2-functor.

\begin{Thm}[Green functors via K-decategorification]
\label{Thm:Green-K-decat}
Let $\cat M$ be a Green 2-functor for~$(\GG;\JJ)$ and suppose its value categories $\cat M(G)$ are all small. Then:
\begin{enumerate}[\rm(1)]
\item \label{it:K-decat-ring}
 The K-decategorification $M = \Kadd\circ \pih (\widehat{\cat M})$ of~$\cat M$ (\Cref{Cons:K-decat}) is an ordinary Green functor for~$(\GG;\JJ)$, commutative if $\cat M$ is braided. As a ring, $M(G)$ is just the $\mathrm K_0$-ring of $\cat M(G)$ with multiplication induced by its internal tensor product. 
\item \label{it:K-decat-module}
Similarly, if $\cat N$ is a left (or right) module over $\cat M$ (\Cref{Def:actions}), its K-decat\-egorif\-icat\-ion $N$ is a left (or right) module over the Green functor~$M$ of~\eqref{it:K-decat-ring}. 
\end{enumerate}
Moreover, if $\cat M $ takes values in exact or triangulated categories and if the tensor products of $\cat M$ preserve exact sequences or triangles in both variables, the same conclusion holds for the K-decategorifications using $\Kex$, $\Kexhigher$ or $\Ktr$ instead (as in Remarks~\ref{Rem:var-K-decat} and~\ref{Rem:exhigher}).
\end{Thm}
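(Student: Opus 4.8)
The plan is to build on \Cref{Cons:K-decat}, which already provides the underlying ordinary Mackey functor $M$ for $(\GG;\JJ)$: by construction its restriction maps $u^\sbull$ are induced by the functors $u^* = \cat M(u)$, and its induction maps $u_\sbull$ (for $u\in\JJ$) by the functor $u_! = u_*$. What remains is to equip each $M(G) = \Kadd(\cat M(G))$ with an associative unital ring structure and to verify the two Dress-pairing axioms of \Cref{Def:Green-fun}.

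First I would define the pairing $\beta_G\colon M(G)\otimes_\bbZ M(G)\to M(G)$ on generators by $[X]\otimes[Y]\mapsto[X\odot_G Y]$, where $\odot_G$ denotes the internal tensor product of $\cat M(G)$. This is well-defined since $\odot_G$ is additive in each variable and hence respects the defining relations $[X\oplus X'] = [X]+[X']$ of $\Kadd$; its unit element is $[\Unit_G]$. Associativity and unitality of this product, and commutativity when $\cat M$ is braided, are then immediate, being the images under $\Kadd$ of the associator, the unitors, and the braiding of the pseudomonoid $\cat M$, together with the fact that isomorphic objects have equal classes in $\Kadd$. Dress-pairing axiom~(a) (naturality for restrictions) holds because each $u^*$ is strong monoidal, so $u^*(X\odot_G Y)\cong u^*X\odot_H u^*Y$ and hence $u^\sbull\beta_G([X],[Y]) = \beta_H(u^\sbull[X],u^\sbull[Y])$. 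Dress-pairing axiom~(b) (the Frobenius formulas) is precisely the $\Kadd$-shadow of the projection formulas of \Cref{Thm:proj-formula}: applying $\Kadd$ to the natural isomorphisms $u_*(u^*X\odot_H Y)\cong X\odot_G u_*Y$ and $u_*(Y\odot_H u^*X)\cong u_*Y\odot_G X$ yields $u_\sbull\beta_H(u^\sbull[X],[Y]) = \beta_G([X],u_\sbull[Y])$ and $u_\sbull\beta_H([Y],u^\sbull[X]) = \beta_G(u_\sbull[Y],[X])$. This settles part~\eqref{it:K-decat-ring}. Part~\eqref{it:K-decat-module} follows by the same recipe: a left (resp.\ right) $\cat M$-module $\cat N$ has, by \Cref{Def:actions}, an action that is a bimorphism and therefore satisfies the projection formulas of \Cref{Thm:proj-formula}; decategorifying the action pairing $\cat M(G)\otimes\cat N(G)\to\cat N(G)$ exactly as above turns $N(G)$ into an $M(G)$-module with the required restriction-naturality and Frobenius identities.

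For the ``moreover'' clause the argument is structurally the same; the only thing to monitor is that the chosen decategorification functor be defined on the categories and functors in play. For $\Kex$ (resp.\ $\Ktr$) one observes that $u^*$ is exact (resp.\ triangulated) and that $u_! = u_*$ is too, being a two-sided adjoint of $u^*$; thus the analogue of \Cref{Cons:K-decat} (\Cref{Rem:var-K-decat}) produces the underlying Mackey functor, and the product $[X]\cdot[Y] = [X\odot_G Y]$ descends to $\Kex$ (resp.\ $\Ktr$) precisely because $\odot_G$ is assumed exact (resp.\ triangulated) in both variables; all remaining identities transfer verbatim. For $\Kexhigher$ the underlying Mackey functor is furnished by \Cref{Rem:exhigher}, and the graded ring structure on $\Kexhigher(\cat M(G)) = K_*(\cat M(G))$ comes from the standard multiplicativity of Quillen $K$-theory: a functor biexact in each variable induces a pairing of $K$-theory spaces (equivalently, of $K$-theory spectra), hence a graded-bilinear product on $K_*$ with unit $[\Unit_G]\in K_0$. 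The associativity, unitality, restriction-naturality, and Frobenius identities then hold on homotopy groups because $\cat M$'s associator and unitors, the strong monoidality of $u^*$, and the projection formulas are natural isomorphisms between biexact functors and thus induce homotopies between the corresponding maps of $K$-theory spaces. This last upgrade---from ``naturally isomorphic functors'' to ``homotopic maps on $K$-theory''---is the only point of the proof requiring genuine care, and it is by now entirely standard.
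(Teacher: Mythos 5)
Your proposal is correct and follows essentially the same route as the paper: the underlying Mackey functor comes from the K-decategorification construction, the ring structure on $\Kadd(\cat M(G))$ comes from the biadditive internal tensor (with restriction maps becoming ring morphisms via strong monoidality of $u^*$, and commutativity from the braiding), the Frobenius formulas are the K-theoretic shadow of \Cref{Thm:proj-formula}, and the exact/triangulated/higher-K variants are handled by the standard multiplicativity of K-theory for bi(exact) pairings exactly as the paper does (citing Weibel for the higher case). Your extra detail on well-definedness and on natural isomorphisms inducing homotopies of K-theory maps is a fleshed-out version of what the paper leaves implicit, not a different argument.
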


\begin{proof}
We know from \cite[Prop.\,2.5.5]{BalmerDellAmbrogio20} that the K-decat\-eg\-orif\-ication of a Mackey 2-functor is an ordinary Mackey functor (the proof---essentially recalled in \Cref{Cons:K-decat}---immediately adapts to the exact and triangulated variations, as well as to higher K-theory as in \Cref{Rem:exhigher}). 
By hypothesis the tensor structure of a Green 2-functor preserves directs sums, respectively exact sequences or triangles, in both variables. It follows that it turns each K-group into a ring and each restriction map into a ring morphism; and similarly for the actions on modules.
Evidently the rings are commutative if $\cat M$ is braided.
Finally, the Frobenius formulas \eqref{it:1-Mackey-Frob} of \Cref{Def:Green-fun} follow immediately from \Cref{Thm:proj-formula}. 
Hence we get ordinary Green functors as claimed. (See \cite[IV\,\S6]{Weibel13} on how to induce parings in higher K-theory of exact categories.)
This settles part~\eqref{it:K-decat-ring}, and part~\eqref{it:K-decat-module} is similar.
\end{proof}

\begin{Thm}[Green functors via Hom-decategorification, I]
\label{Thm:Green-End-decat}
Let $\cat M$ be any Mackey (!) 2-functor for~$(\GG;\JJ)$. Then:
\begin{enumerate}[\rm(1)]
\item \label{it:Hom-decat-ring}
The Hom-decategorification $M=M(\{X_G , X_G, \lambda_u, \lambda^{-1}_u\}_{G,u})$ of $\cat M$ for any coherent choice of objects such that $X_G= Y_G$ and $\rho_u= \lambda_u^{-1}$ (\Cref{Cons:Hom-decat}) is a Green 2-functor for~$(\GG;\JJ)$. 
As a ring, each $M(G)$ is just $\End_{\cat M(G)}(X_G)$.
The resulting Green functors are commutative if the endomorphism ring of each $X_G$ is commutative. 
\item \label{it:Hom-decat-module}
Similarly, the Hom-decategorification for any coherent choice $\{X_G , Y_G, \lambda_u, \rho_u\}$ of pairs of objects in~$\cat M$ is a left module over $M(\{Y_G , Y_G, \rho^{-1}_u, \rho_u\}_{G,u})$ and a right module over $M(\{X_G , X_G, \lambda_u, \lambda^{-1}_u\}_{G,u})$, the latter two being the endomorphism Green functors as in~\eqref{it:Hom-decat-ring}.
\end{enumerate}
\end{Thm}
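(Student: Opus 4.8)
The plan is to reduce the statement to a handful of elementary checks, since the underlying ordinary Mackey functor $M$ is already produced by \Cref{Cons:Hom-decat} (\ie by \cite{BalmerDellAmbrogio21pp}): it remains only to exhibit a Dress pairing as in \Cref{Def:Green-fun} and verify its axioms. For part~\eqref{it:Hom-decat-ring} I would take for $\beta_G\colon M(G)\otimes_\kk M(G)\to M(G)$ the composition law of the ring $\End_{\cat M(G)}(X_G)$, which is associative and unital by construction and commutative exactly when each $\End_{\cat M(G)}(X_G)$ is. Axiom~\eqref{it:1-Mackey-ring} of \Cref{Def:Green-fun} (naturality for restrictions) then says precisely that each $u^\sbull$ is a unital ring homomorphism, and since $u^\sbull(\varphi)=\lambda_u^{-1}\circ u^*(\varphi)\circ\lambda_u$ with $u^*=\cat M(u)$ a functor, one reads off at once $u^\sbull(\varphi\circ\psi)=\lambda_u^{-1}u^*(\varphi)u^*(\psi)\lambda_u=u^\sbull(\varphi)\circ u^\sbull(\psi)$ and $u^\sbull(\id_{X_G})=\id_{X_H}$, using nothing but functoriality of $u^*$ and invertibility of $\lambda_u$.

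The only step with real content is axiom~\eqref{it:1-Mackey-Frob}, the two Frobenius formulas. Here I would spell out the transfer map $u_\sbull(\psi)=\leps\circ u_*(\lambda_u\,\psi\,\lambda_u^{-1})\circ\reta$ of \Cref{Cons:Hom-decat} (recall that $\rho_u=\lambda_u^{-1}$ in the present case, and that rectification \Cref{Rem:rectification} gives $u_!=u_*$, so that $\leps$ makes sense as a transformation $u_*u^*\Rightarrow\Id$). For $x\in M(G)$ and $y\in M(H)$ one then computes
\[
x\circ u_\sbull(y)=\leps\circ u_*\bigl(u^*(x)\circ\lambda_u\circ y\circ\lambda_u^{-1}\bigr)\circ\reta=u_\sbull\bigl(u^\sbull(x)\circ y\bigr),
\]
where the first equality uses the naturality of the counit $\leps\colon u_*u^*\Rightarrow\Id$ at $x$ together with functoriality of $u_*$, and the second merely unwinds the definitions of $u_\sbull$ and $u^\sbull$ and cancels $\lambda_u\lambda_u^{-1}$. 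Symmetrically, using instead the naturality of the unit $\reta\colon\Id\Rightarrow u_*u^*$ at $x$, one obtains $u_\sbull(y)\circ x=u_\sbull\bigl(y\circ u^\sbull(x)\bigr)$. These are exactly the two Frobenius formulas of \Cref{Def:Green-fun} with $\beta$ the composition law; note that they use only the ambidextrous adjunction $u_!\dashv u^*\dashv u_*$ of \emph{any} Mackey 2-functor, and no multiplicative structure on $\cat M$.

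For part~\eqref{it:Hom-decat-module} I would run the same three computations with post-composition and pre-composition in place of composition: the left action of $\End_{\cat M(G)}(Y_G)$ on $\Hom_{\cat M(G)}(X_G,Y_G)$ by $(\psi,\varphi)\mapsto\psi\circ\varphi$, and the right action of $\End_{\cat M(G)}(X_G)$ by $(\varphi,\xi)\mapsto\varphi\circ\xi$, satisfy the restriction-naturality and Frobenius axioms by verbatim the same naturality arguments, now also decorated by $\rho_u$ on the target side. I expect the only genuine difficulty in the whole proof to be bookkeeping: keeping the coherence isomorphisms $\lambda_u$ (on source objects) and $\rho_u$ (on target objects) straight across the up to three Mackey functors involved, and remembering that the transfer formula of \Cref{Cons:Hom-decat} implicitly invokes \emph{both} adjunctions (through $\leps$ and $\reta$) together with the rectification identity $u_!=u_*$. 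Everything else is purely formal.
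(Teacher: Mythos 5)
Your proposal is correct and follows essentially the same route as the paper's proof: cite \Cref{Cons:Hom-decat} for the underlying Mackey functor, observe that the restriction maps preserve composition because $\lambda_u$ and $\rho_u$ are mutually inverse (resp.\ compatible), and prove the Frobenius formulas by unwinding the definition of $u_\sbull$ and using naturality of $\leps$ (resp.\ $\reta$) together with functoriality of $u_*$. The only difference is presentational: the paper organizes the same cancellation $\rho_u^{-1}\rho_u$ and naturality argument into a commutative diagram rather than an equational computation.
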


\begin{proof}
We know from \cite[Theorem\,3.7]{BalmerDellAmbrogio21pp} that the Hom-decategorification of any Mackey 2-functor $\cat M$ at any coherent family of pairs is a Mackey functor. The product and action pairings of the claims, which are simply given by composition of morphisms, are clearly preserved by the restriction maps $u^\sbull$ of these Mackey functors because we suppose $\lambda$ and~$\rho$ to be mutual inverses. Hence the only remaining doubts concern the Frobenius formulas. 

Let $u\colon H\to G$ be in~$\JJ$. 
We only verify the first formula $x \cdot u_\sbull (y) = u_\sbull ( u^\sbull (x)\cdot y)$ in the ring case~\eqref{it:Hom-decat-ring}, as the proofs of the second formula or for modules~\eqref{it:Hom-decat-module} are similar. Thus let $x\in M(G)= \End_{\cat M(G)}(X_G)$ and $y \in M(H)= \End_{\cat M(H)}(X_H)$, and consider the following diagram in~$\cat M(G)$ (where we write $\rho:=\rho_u= \lambda_u^{-1}$):
\[
\xymatrix{
&& && \\
X_G
 \ar[d]_{\reta}
 \ar[rr]^-{u_\sbull (y)}
 \ar@/^6ex/[rrrr]^-{u_\sbull( u^\sbull(x) \cdot y)}  &&
X_G
 \ar[rr]^-x && X_G \\
u_*u^* X_G
 \ar[rr]^-{u_*(\rho^{-1} y \rho)}
 \ar@/_6ex/[rrrr]_-{u_*( u^*(x) \cdot \rho^{-1} y \rho)} && 
u_*u^* X_G
 \ar[u]^{\leps}
 \ar[rr]^-{u_*u^*(x)} && 
u_*u^* X_G
 \ar[u]_{\leps} \\
&& &&
}
\]
The right square commutes by naturality of $\leps$, the left square by definition of~$u_\sbull(y)$, and the lower triangle by functoriality of~$u_*$.
Since the bottom curved arrow is also equal to $u_*( \rho^{-1} \rho \,u^*(x)\, \rho^{-1} y \rho) = u_*( \rho^{-1} ( u^\sbull (x) \, y) \rho)$, the outer square commutes by definition of $u_\sbull (u^\sbull (x)\cdot y)$. We deduce from all this that the top triangle also commutes, proving the claimed formula.

All remaining details are straightforward and are left to the reader.
\end{proof}

\begin{Thm} [Green functors via Hom-decategorification, II]
\label{Thm:Hom-decat-co-mon}
Let $\cat M$ be a Green 2-functor for~$(\GG;\JJ)$. Then:
\begin{enumerate}[\rm(1)]
\item
\label{it:Hom-decat-ring-bis}
 Suppose $\{X_G , Y_G, \lambda_u, \rho_u\}_{G,u})$ is a coherent family of pairs in $\cat M$ such that: Each $X_G$ is a comonoid  in~$\cat M(G)$, each $Y_G$ a monoid, each $\lambda_u$ a comonoid map and each $\rho_u$ a monoid map. Then its Hom-decategorification $M$ (as in \Cref{Cons:Hom-decat}) is a Green functor for~$(\GG;\JJ)$ where the multiplication on $M(G)=\Hom_{\cat M(G)}(X_G,Y_G)$ is the associated convolution product.
\item
\label{it:Hom-decat-module-bis}
The Mackey functor induced by a family whose objects are left (resp.\ right) modules and comodules over the monoids and comonoids of part~\eqref{it:Hom-decat-ring-bis} and whose coherence maps are morphisms of such, is a left (resp.\ right) module over the Green functor of~\eqref{it:Hom-decat-ring-bis}. 
\end{enumerate}
\end{Thm}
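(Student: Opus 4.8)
The plan is to supply the Dress pairing of \Cref{Def:Green-fun} via the convolution product and then verify its two defining conditions, the second of which is where the projection formula of \Cref{sec:proj} enters. Recall first that the underlying Mackey functor $M$, with $M(G)=\Hom_{\cat M(G)}(X_G,Y_G)$ and the restriction and transfer maps of \Cref{Cons:Hom-decat}, exists by \cite[Thm.\,A.7]{BalmerDellAmbrogio21pp}; so only the multiplicative structure must be constructed and checked. Writing $\otimes$ for the internal tensor of $\cat M(G)$, and letting $\delta=\delta_{X_G}$, $\epsilon=\epsilon_{X_G}$ and $m=m_{Y_G}$, $\iota=\iota_{Y_G}$ be the given comonoid and monoid structures, define $\beta_G(\varphi,\psi):=m\circ(\varphi\otimes\psi)\circ\delta$ for $\varphi,\psi\colon X_G\to Y_G$. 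That $\beta_G$ is $\kk$-bilinear is immediate; that it is associative and unital, with unit $\iota\circ\epsilon$, follows from the coassociativity and counitality of $\delta$, the associativity and unitality of $m$, and bifunctoriality of $\otimes$, by the usual convolution-algebra argument. Thus each $M(G)$ is an associative unital $\kk$-algebra (commutative if each $X_G$ is cocommutative and each $Y_G$ commutative, \eg when $\cat M$ is symmetric and the objects are chosen so).

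For condition (a) of \Cref{Def:Green-fun} (naturality for restrictions), fix $u\colon H\to G$ in $\GG$. One expands $u^\sbull\beta_G(\varphi,\psi)=\rho_u\circ u^*\bigl(m\circ(\varphi\otimes\psi)\circ\delta\bigr)\circ\lambda_u$, inserts the strong monoidal structure isomorphisms of $u^*$ to rewrite $u^*(\varphi\otimes\psi)$ through $u^*\varphi\otimes u^*\psi$, and then uses that $\lambda_u$ is a comonoid morphism and $\rho_u$ a monoid morphism to cancel these isomorphisms against the transported (co)monoid structures on $u^*X_G$ and $u^*Y_G$, obtaining $m_{Y_H}\circ\bigl((\rho_u u^*\varphi\lambda_u)\otimes(\rho_u u^*\psi\lambda_u)\bigr)\circ\delta_{X_H}=\beta_H(u^\sbull\varphi,u^\sbull\psi)$. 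This is a short diagram chase that I would carry out explicitly.

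Condition (b) --- the Frobenius (projection) formulas --- is the heart of the matter. Fix $u\colon H\to G$ in $\JJ$, $\varphi\colon X_G\to Y_G$ and $\psi\colon X_H\to Y_H$; I must show $\beta_G(\varphi,u_\sbull\psi)=u_\sbull\bigl(\beta_H(u^\sbull\varphi,\psi)\bigr)$ and the mirror identity $\beta_G(u_\sbull\psi,\varphi)=u_\sbull\beta_H(\psi,u^\sbull\varphi)$. Unfolding $u_\sbull=\leps\circ u_*\!\bigl(\rho_u^{-1}(-)\lambda_u^{-1}\bigr)\circ\reta$ (\Cref{Cons:Hom-decat}, using $u_!=u_*$) turns both sides into composites in $\cat M(G)$ built from $\delta_{X_G}$, $m_{Y_G}$, the 2-cells $\reta,\leps$ (and, through the rectification relations, $\leta,\reps$), the functor $u_!=u_*$ applied to morphisms assembled from $u^*\varphi$ and $\psi$, and the coherence isomorphisms $\lambda_u,\rho_u$. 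The essential point is that tensoring on the outside by $\id_{X_G}$ (resp.\ $\id_{Y_G}$) and then pushing $u_!=u_*$ past the tensor is precisely the colax, resp.\ lax, monoidal structure of $u_!=u_*$, which by \Cref{Lem:proj-lax-colax} is encoded by the left and right projection maps of \Cref{Def:Frobs}; feeding in $\lproj^1=(\rproj^1)^{-1}$ and $\lproj^2=(\rproj^2)^{-1}$ from \Cref{Thm:proj-formula}, together with the naturality of $\reta$ and $\leps$, the strong monoidal structure of $u^*$, the compatibility of $\lambda_u,\rho_u$ with the (co)monoid structures, and the rectification identity $\reps\circ\leta=\id$, collapses the two sides to a common composite. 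I would present this as a single commutative diagram in $\cat M(G)$ modelled on the last diagram in the proof of \Cref{Thm:proj-formula} and on the diagram in the proof of \Cref{Thm:Green-End-decat}, each cell of which commutes for one of the reasons just listed; the mirror formula is symmetric. This proves part (1).

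Part (2) runs entirely in parallel: given a coherent family $\{X'_G,Y'_G,\lambda'_u,\rho'_u\}$ in which each $X'_G$ is a left (resp.\ right) comodule over the comonoid $X_G$, each $Y'_G$ a left (resp.\ right) module over the monoid $Y_G$, and the coherence maps are morphisms of such, the same recipe with the convolution \emph{action} $\Hom_{\cat M(G)}(X_G,Y_G)\otimes_\kk\Hom_{\cat M(G)}(X'_G,Y'_G)\to\Hom_{\cat M(G)}(X'_G,Y'_G)$ (and its right-hand analogue) in place of the convolution product yields a Mackey functor $N$ with a Dress pairing $(M,N)\to N$ (resp.\ $(N,M)\to N$) making each $N(G)$ a module over $M(G)$; the module axioms and the two Dress conditions are verified verbatim as in part (1), and I would leave these routine checks to the reader. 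The main obstacle throughout is the Frobenius-formula diagram chase of the third paragraph: it is exactly there that one uses that $\cat M$ is a genuine Green 2-functor --- its two projection maps being mutually inverse --- rather than merely a Mackey 2-functor, and one must be careful that the mates used for the two halves of the projection formula are the mutually inverse ones furnished by \Cref{Thm:proj-formula} (equivalently, that $u_!$ and $u_*$ coincide and the strict Mackey formula holds).
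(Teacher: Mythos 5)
Your proposal is correct and follows essentially the same route as the paper: it constructs the convolution product, checks compatibility with restrictions using the strong monoidal structure of $u^*$ together with $\lambda_u$, $\rho_u$ being (co)monoid maps, and derives the Frobenius formulas from the mutual inverseness of the projection maps (\Cref{Thm:proj-formula}, via \Cref{Lem:proj-lax-colax}) combined with naturality of $\reta,\leps$ and the triangle identities, exactly the ingredients the paper assembles into its explicit commutative diagrams; part (2) is likewise handled in parallel and left routine. The only (harmless) difference is that you also invoke the special relation $\reps\circ\leta=\id$, which the paper's diagram chase does not actually need here.
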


\begin{proof}
As before we verify~\eqref{it:Hom-decat-ring-bis} and leave the similar~\eqref{it:Hom-decat-module-bis} to the reader.

Write $(X_G,\delta_G,\epsilon_G)$ and $(Y_G,\mu_G,\iota_G)$ for the given comonoid and monoid structures.
By definition, the convolution product of two elements $x,y\in M(G)$ is $x\cdot y = \mu_G\circ  (x\otimes y)\circ \delta_G$.
For any $u\colon H\to G$, the restriction map $u^\sbull\colon M(G)\to M(H)$ preserves the convolution product by the commutativity of the following diagram:
\[
\xymatrix@C=12pt@R=8pt{
& && && && & \\
& u^*X_G
 \ar@/^6ex/[rrrrrr]^-{u^*(x \cdot y)}
 \ar[rr]_-{u^*\delta_G} &&
u^*(X_G \otimes X_G)
 \ar[rr]_-{u^*(x\otimes y)} &&
u^*(Y_G\otimes Y_G) 
 \ar[rr]_-{u^*\mu_G} &&
u^*Y_G
 \ar[dr]^{\rho_u} &  \\
X_H 
 \ar[ur]^{\lambda_u}
  \ar[dr]_{\delta_H} & && && && & Y_H \\
& X_H\otimes X_H
 \ar@/_6ex/[rrrrrr]_-{u^\sbull (x) \otimes u^\sbull (y)}
 \ar[rr]^-{\lambda_u\otimes \lambda_u} &&
u^*X_G\otimes u^*X_G
 \ar[uu]^{\mathrm{str}}_{\simeq}
 \ar[rr]^-{u^*x \otimes u^*y} &&
u^*Y_G \otimes u^*Y_G
  \ar[uu]^{\mathrm{str}}_{\simeq}
  \ar[rr]^-{\rho_u\otimes \rho_u} &&
u^*Y_G 
 \ar[ur]_{\mu_H} & \\
 & && && && &
}
\]
Note that the left and right pentagons commute because $\lambda_u$ is a comonoid morphism and $\rho_u$ is a monoid morphism. 
A similar diagram shows that $u^\sbull$ preserves the unit, that is $u^\sbull ( \iota_G \epsilon_G)= \iota_H \epsilon_H$. 
It remains to verify the Frobenius formulas; we only check $x \cdot u_\sbull (y) = u_\sbull ( u^\sbull (x)\cdot y)$ and omit the analogous verification of the other one. 
For any two $x\in M(G),y\in M(H)$, observe the following diagram, where $\rproj^1$ is the first right projection map of \Cref{Def:Frobs}:
\begin{equation} \label{eq:central-part}
\vcenter{
\xymatrix@C=10pt@L=6pt{
X_G \otimes X_G
 \ar[d]_\reta
 \ar[r]^-{\id \otimes \reta} &
X_G \otimes u_*u^* X_G
 \ar[d]_{\rproj^1}^\simeq
 \ar[rr]^-{x \otimes u_*(\rho^{-1}_u y \lambda^{-1}_u)} &&
Y_G \otimes u_*u^* Y_G
 \ar[d]_{\rproj^1}^\simeq
 \ar[r]^-{\id \otimes \leps} &
Y_G \otimes Y_G
  \\
u_*u^*(X_G \otimes X_G) & 
u_*(u^* X_G \otimes u^* X_G)
 \ar[l]^-{u_*(\mathrm{str})}_-\simeq
 \ar[rr]_-{u_*(u^*x \otimes \rho_u^{-1}y \lambda_u^{-1})} &&
u_*(u^*Y_G \otimes u^*Y_G)
 \ar[r]^-\simeq_-{u_*(\mathrm{str})}
 \ar@{..>}@<-4ex>[u]_{\lproj^1} &
u_*u^*(Y_G \otimes Y_G)
 \ar[u]_{\leps}
}
}
\end{equation}
It is commutative: The central square commutes by the naturality of~$\rproj^1$.
The left square commutes because it is the perimeter of the following diagram,
\[
\xymatrix{
X_G \otimes X_G
 \ar[d]_{\reta}
 \ar[rr]^-{\id \otimes \reta} &&
X_G \otimes u_*u^* X_G
 \ar[d]_{\reta}
 \ar@{}[dddr]^{\textrm{def.}}
 \ar `r[dr] `[ddd]^{\rproj^1_{X_G, u^*X_G}}   [ddd] & \\
u_*u^*(X_G \otimes X_G)
 \ar[rr]^-{u_*u^*(\id \otimes \reta)} &&
u_*u^*(X_G \otimes u_*u^* X_G) & \\
u_*(u^*X_G \otimes u^*X_G)
 \ar[u]^{u_*(\mathrm{str})}_\simeq
 \ar[rr]^-{u_*(\id \otimes u^* \reta)}
 \ar@{=}@/_3ex/[drr] &&
u_*(u^*X_G \otimes u^*u_* u^* X_G )
 \ar[u]^{u_*(\mathrm{str})}_\simeq
 \ar[d]_{u_*(\id \otimes \reps)} & \\
&& u_*(u^*X_G \otimes u^*X_G) &
}
\]
which commutes by: the naturality of $\reta$ (for the top square), the naturality of the strong monoidal isomorphism of~$u^*$  (the middle square), and a unit-counit relation for~$i^*\dashv i_*$ (the bottom triangle).
The right square in \eqref{eq:central-part} commutes by a similar diagram, after replacing $\rproj^1$ with $(\lproj^1)^{-1}$ (see \Cref{Thm:proj-formula}).
Thus \eqref{eq:central-part} is commutative, as claimed, and we are now ready to contemplate our last diagram:
\[
\xymatrix@!C@C=-34pt@R=22pt{
& & & && & & & \\
& X_G \otimes X_G
 \ar[rr]^-{\id\otimes \reta}
 \ar[dr]^\reta  &&
X_G \otimes u_*u^*X_G
 \ar[rr]^-{\underset{\phantom{bla}}{x \otimes u_*(\rho^{-1}y \lambda^{-1})}} &&
Y_G \otimes u_*u^*X_G
 \ar[rr]^-{\id \otimes \leps} && 
Y_G \otimes Y_G
 \ar[dr]^{\mu} & \\
X_G 
 \ar `u[uur] `[rrrrrrrr]^-{x\cdot u_\sbull(y)} [rrrrrrrr]
  \ar `d[dddr] `[rrrrrrrr]_-{u_\sbull(u^\sbull(x)\cdot y)} [rrrrrrrr]
 \ar[ur]^\delta
 \ar[dr]_\reta && 
u_*u^*(X_G \otimes X_G)
 \ar@{}[rrrr]^-{\eqref{eq:central-part}} &&&& 
u_*u^*(Y_G \otimes Y_G)
 \ar[ur]^\leps
 \ar[dr]_{u_*u^* \mu} & & Y_G \\
& u_*u^* X_G 
 \ar[ur]_{u_*u^* \delta} && 
u_*(\!u^*\!X_G \!\otimes\! u^*\!X_G\!)
 \ar[ul]_{\simeq}
 \ar[rr]^-{\underset{\phantom{bla}} { u_*(u^*\!x \otimes \rho^{-1}\! y\lambda^{-1}\!) }} &&
u_*(\!u^*Y_{\!G} \!\otimes\! u^*Y_{\!G}\!)
 \ar[ur]^\simeq
 \ar[d]_\simeq^{u_*(\rho\otimes \rho)} &&
u_*u^*Y_G 
 \ar[ur]_\leps
  \ar[d]_\simeq^{u_* \rho} & \\
& u_*X_H
 \ar[u]^{u_*\lambda}_\simeq
 \ar[rr]_-{u_*\delta} &&
u_*(X_H \otimes X_H)
 \ar[u]^\simeq_{u_*(\lambda\otimes\lambda)}
 \ar[rr]_-{\overset{\phantom{bla}}{u_*( (\rho u^*\!(x) \lambda) \otimes y)}} &&
u_*(Y_H\otimes Y_H)
 \ar[rr]_-{u_*\mu} &&
u_*Y_H & \\
& & & && & & &
}
\]
As before, the two pentagons commute (even before~$u_*$) because $\lambda=\lambda_u$ and $\rho=\rho_u$ are morphisms of (co)monoids. 
The two rhombuses commute by the naturality of $\reta$ and~$\leps$, respectively, and the middle square  (even before~$u_*$) by the functoriality of $\otimes = \otimes_H$.
Thus the whole diagram commutes, completing the proof.
\end{proof}

\begin{Rem} \label{Rem:grading}
If $\cat M$ takes values in $\mathbb Z$-graded categories and grading-preserv\-ing functors, we have graded versions of all the Green functors and modules in Theorems~\ref{Thm:Green-End-decat} and~\ref{Thm:Hom-decat-co-mon}, with values given instead by the graded Homs: $G\mapsto M(G)=\Hom^*_{\cat M(G)}(X_G , Y_G)$.
For Green 2-functors with values in tensor triangulated categories one typically uses the grading induced by the suspension functor.
\end{Rem}

\begin{Exa} \label{Exa:G-local}
In the local case $\GG= \JJ= \gpdG$ for a fixed~$G$, any pair of objects $X,Y\in \cat M(G)$ in the ``top'' category gives rise to a coherent family of pairs simply by restriction: $X_H = u^*X_G$ and $Y_H= u^*X_H$ for all $u\colon H\to G$, with $\lambda =  \rho = \id$. 
\end{Exa}

\begin{Exa} \label{Exa:units}
For any Green 2-functor~$\cat M$, the tensor unit objects $X_G = Y_G = \Unit$ form a coherent family of pairs thanks to the unit isomorphisms $u^*X_G\cong X_H$ of the strong monoidal functors~$u^*$.
The resulting Green functor $G\mapsto \End_{\cat M(G)}(\Unit)$ is commutative by the Eckmann--Hilton argument. 
In case the categories carry tensor-compatible gradings as in \cite{SuarezAlvarez04} (\eg when $\cat M$ takes values in tensor triangulated categories), we obtain a \emph{graded} commutative version $G \mapsto  \End_{\cat M(G)}^*(\Unit)$.
\end{Exa}

\begin{center}
$*\;\;*\;\;*$
\end{center}

We conclude by explaining how the classical Green functors used in algebra and topology arise by K- or Hom-decategorification, or sometimes both. 
All Hom-examples occur by specializing either \Cref{Exa:G-local} or \ref{Exa:units}, occasionally in graded form (\Cref{Rem:grading}).
The relevant pair $(\GG;\JJ)$ is always the same as for the used Green 2-functor.
Further variations are possible and are left to the interested reader. 

\begin{Exa}[Group cohomology]
\label{Exa:coh}
The cohomology ring Green functor $G\mapsto \mathrm H^*(G;\kk)$ arises by Hom-decategorification at the tensor units of the derived category Green 2-functor $G\mapsto \Der(\kk G)$ (\Cref{Exa:rep-theory}). For every $V \in \Der(\kk G)$, (hyper-) cohomology with twisted coefficients $H \mapsto \mathrm H^*(H;V|_H)$ yields a Mackey functor for~$G$ which is a module over the cohomology ring Green functor (for~$G$).
\end{Exa}

\begin{Exa}[Fixed points]
For a fixed group~$G$, suppose $A$ is a $G$-algebra, \ie a monoid in $\Mod(\kk G)$. Then $H\mapsto \mathrm H^0(H; A)= A^H$ yields a 
Green functor for $G$ whose value at $H\leq G$ is the subalgebra of $H$-fixed points in~$A$.
Indeed, this arises by Hom-decategorification as in \Cref{Thm:Hom-decat-co-mon} of the ($G$-local version of the) Green 2-functor $H\mapsto \Mod(\kk H)$ at the family of pairs induced (as in \Cref{Exa:G-local}) by the comonoid $X_G= \Unit = \kk$ and the monoid~$Y_G=A$.
Passing to the derived category, we get a Green functor for~$G$ whose value at~$H$ is the whole graded cohomology algebra $\mathrm H^*(H;A)$.
\end{Exa}

\begin{Exa}[Tate cohomology]
If  in \Cref{Exa:coh} we replace the derived category with the stable category Green 2-functor $G\mapsto \Stab(\kk G)$ (\Cref{Exa:quots}), we similarly obtain the Tate cohomology Green functor $G\mapsto \widehat{\mathrm H}^*(G;\kk)$ and Tate cohomology with any twisted coefficients as a ($G$-local) module over it. 
\end{Exa}

\begin{Exa}[The $\mathrm G_0$-ring]
\label{Exa:G_0}
If $\kk$ is a field (so that $-\otimes_\kk - $ is exact), consider the Green 2-functor $G\mapsto \mathrm{mod}(\kk G)$ of finite dimensional modules (\Cref{Exa:rep-theory}).
By K-decategorification via $\Kex$ (see \Cref{Rem:var-K-decat}), we get the Green 2-functor whose values are $\mathrm G_0(\kk G)$, the G-theory group of the group algebras.
\end{Exa}

\begin{Exa}[The trivial source ring]
\label{Exa:trivial-source}
The trivial source ring Green functor is the $\Kadd$-decategorification of the Green 2-functor $G \mapsto \mathrm{perm}_\kk(G)^\natural$ of trivial source modules (\Cref{Exa:rep-theory}). This is usually defined over a field $\kk$ of positive characteristic $p>0$, in which case such modules are known as $p$-permutation modules.
\end{Exa}

\begin{Exa}[The representation ring]
\label{Exa:Green-ring}
For a general~$\kk$, the representation ring (or Green ring) Green functor $G \mapsto \mathrm R_\kk(G)$
is the $\Kadd$-decat\-egorific\-ation of the Green 2-functor $G\mapsto \latt_\kk(G)$ of $\kk G$-lattices (\Cref{Exa:rep-theory}).
\end{Exa}

\begin{Exa} [Algebraic K-theory]
For a field~$\kk$ of characteristic zero, consider the Green 2-functor $G\mapsto \mathrm{proj}(\kk G)=\mathrm{mod}(\kk G)$ of finitely generated (necessarily projective) modules (\Cref{Exa:rep-theory}; note that $-\otimes_\kk-$ is exact on $\kk G$-projectives even if $\kk$ is not a field, but we also want a tensor unit!).
Via K-decategorification as in \Cref{Rem:exhigher}, we obtain from it a (graded) Green functor $G\mapsto \mathrm K_*^{\mathrm{alg}}(\kk G)$ whose values are the algebraic K-theory groups of the group algebra, with multiplication induced by the Hopf algebra structure of~$\kk G$. 
\end{Exa}

\begin{Exa}[The Burnside ring]
The Burnside ring Green functor $G\mapsto \mathrm B(G) = \mathrm K_0(G\sset, \sqcup, \times)$ (denoted $A(G)$ by topologists) arises in two ways. Firstly as the $\Kadd$-decategorification of the Green 2-functor of ``representable'' ordinary Mackey functors, \ie the additive category of spans of finite $G$-sets $G\mapsto \mathbb Z \pih \Span (G\sset)$ (see \Cref{Exa:Mack}), since its K-ring is the same as for $G$-sets. Secondly as the Hom-decategorification at the tensor units of the stable homotopy Green 2-functor $G\mapsto \SH(G)$ (\Cref{Exa:SH}). 
For any genuine $G$-spectrum $X\in \SH(G)$, the homotopy groups $H\mapsto \pi_*^H(X)= \SH(H)_*(\Unit, X|_H)$ yield a graded Mackey functor for~$G$, \ie a graded module over the Burnside ring (for~$G$). 
The latter can all be realized this way
by the existence of equivariant Eilenberg--Mac~Lane spectra.
\end{Exa}

\begin{Exa}[The complex representation ring]
Similarly, the complex representation ring Green functor $G\mapsto \mathrm R_\mathbb C(G)$ arises both as the K-decategorification of the Green 2-functor  $G\mapsto \fgmod(\mathbb C G)$ (as in Examples~\ref{Exa:G_0} or~~\ref{Exa:Green-ring} with $\kk = \mathbb C$), and as the Hom-de\-categor\-ific\-ation at the tensor units of the Kasparov theory Green 2-functor $G\mapsto \KK(G)$ (\Cref{Exa:KK}). For any separable $G$-C*-algebra $A\in \KK(G)$, the equivariant K-theory groups $H\mapsto  \mathrm K_*^H(A)= \KK(H)_*(\Unit, A|_H)$ yield a graded Mackey functor for~$G$ which is a module over the representation ring. This recovers results of \cite[\S4]{DellAmbrogio14}.
\end{Exa}


\bibliographystyle{alpha}

\end{document}